\DeclareMathOperator{\A}{A}
\DeclareMathOperator{\D}{D}
\DeclareMathOperator{\F}{F}
\DeclareMathOperator{\R}{R}
\DeclareMathOperator{\dom}{dom}
\DeclareMathOperator{\At}{At}
\newcommand \compo {\mathbin{;}}
\newcommand \bmeet {\cdot}
\newcommand \rest {\mathbin{\vartriangleright}}
\newcommand \aand \wedge
\newcommand{\meet}{\prod}
\newcommand{\join}{\sum}
\newcommand{\from}{\colon}
\newcommand{\algebra}[1]{\mathfrak{#1}}
\newcommand{\defn}[1]{\textbf{#1}}
\newcommand\mynobreakpar{\par\nobreak\@afterheading}
\theoremstyle{cupthm}
\newtheorem{theorem}{Theorem}[section]
\newtheorem{proposition}[theorem]{Proposition}
\newtheorem{corollary}[theorem]{Corollary}
\newtheorem{lemma}[theorem]{Lemma}
\theoremstyle{cupdefn}
\newtheorem{definition}[theorem]{Definition}
\theoremstyle{cuprem}
\newtheorem{problem}[theorem]{Problem}
\newtheorem{example}[theorem]{Example}
\numberwithin{equation}{section}
\newcommand{\parrow}{\rightharpoonup}
\newcommand{\typeface}{\mathbf}
\newcommand{\setq}{\typeface{
    Set_q}}
\newcommand{\aralg}{\typeface{AtRepAlg}}
\newcommand{\cA}{\algebra A} \newcommand{\cB}{\algebra B}
\newcommand{\cC}{\algebra C} 
\newcommand{\cP}{\algebra P} \newcommand{\cQ}{\algebra Q}
\newcommand*{\textrel}[1]{\mathrel{\textnormal{#1}}}
\begin{document}
\runningtitle{Difference--restriction algebras of partial functions with operators: discrete duality and completion}
\title{Difference--restriction algebras of partial functions with operators: discrete duality and completion}
\author[1]{C\'elia Borlido$^{\,1}$}
\author[2]{Brett McLean$^{*, \,2}$}

\authorheadline{C\'elia Borlido and Brett McLean}

\support{ * Corresponding author.

  \noindent
  1.  CMUC, Departamento de Matem\'atica, Universidade de Coimbra,
  \mbox{3001-501}~Coimbra, Portugal
  \email{cborlido@mat.uc.pt},  \qquad
  ORCID: 0000-0002-0114-1572
  
  \noindent
  2. Laboratoire J. A. Dieudonn\'e UMR CNRS 7351, Universit\'e Nice
  Sophia Antipolis, 06108~Nice~Cedex~02,
  France\email{brett.mclean@unice.fr},  \qquad ORCID: 0000-0003-2368-8357}

\keywords{Partial function, complete representation, duality, compatible completion, completely additive operators}

\begin{abstract}
We exhibit an adjunction between a category of abstract algebras of partial functions and a category of set quotients. The algebras are those atomic algebras representable as a collection of partial functions closed under relative complement and domain restriction; the morphisms are the complete homomorphisms. This generalises the discrete adjunction between the atomic Boolean algebras and the category of sets. We define the compatible completion of a representable algebra, and show that the monad induced by our adjunction yields the compatible completion of any atomic representable algebra. As a corollary, the adjunction restricts to a duality on the compatibly complete atomic representable algebras, generalising the discrete duality between complete atomic Boolean algebras and sets. We then extend these adjunction, duality, and completion results to representable algebras equipped with arbitrary additional completely additive and compatibility preserving operators.
\end{abstract}

\maketitle 

\section*{Declarations}

\subsection*{Funding}
The first author was partially supported by the Centre for Mathematics
of the University of Coimbra - UIDB/00324/2020, funded by the
Portuguese Government through FCT/MCTES and partially supported by
the European Research Council (ERC) under the European Union's Horizon
2020 research and innovation program (grant agreement No. 670624).
The second author was partially supported by
the European Research Council (ERC) under the European Union's Horizon
2020 research and innovation program (grant agreement No. 670624) and partially supported by the Research Foundation -- Flanders (FWO) under the SNSF--FWO Lead Agency Grant 200021L 196176 (SNSF)/G0E2121N (FWO).

\section{Introduction}

The study of algebras of partial functions is an active area of research that investigates collections of partial functions and their interrelationships from an algebraic perspective. The partial functions are treated as abstract elements that may be combined algebraically using various natural operations such as composition, domain restriction, `override', or `update'. In pure mathematics, algebras of partial functions arise naturally as structures such as inverse semigroups~\cite{wagnergeneralised}, pseudogroups~\cite{LAWSON2013117}, and skew lattices~\cite{Leech19967}. In theoretical computer science, they appear in the theories of finite state transducers \cite{10.1145/2984450.2984453},  computable functions \cite{JACKSON2015259}, deterministic propositional dynamic logics \cite{DBLP:journals/ijac/JacksonS11}, and separation logic \cite{disjoint}. 
Many different selections of operations have been considered, each leading to a different class/category of abstract algebras \cite{schein, garvacii71, Tro73, schein1992difference, 1018.20057, 1182.20058, DBLP:journals/ijac/JacksonS11, hirsch, BERENDSEN2010141, JACKSON2021106532}. (See \cite[\S 3.2]{phdthesis} for a guide to this literature.) Recently, dualities for some of these categories have started to appear \cite{lawson_2010, lawson2012non, kudryavtseva2017perspective, LAWSON201677, LAWSON2013117, 2009.07895, Bauer_2013, kudryavtseva2016boolean}, opening the way for these algebras to be studied via their duals, as has been done successfully for the algebraisations of classical and many non-classical propositional logics \cite{goldblatt, esakia, 10.2307/27588391, GEHRKE2014290}.

In \cite{diff-rest1}, we initiated a project to develop a general and modular framework for producing and understanding dualities for such categories. For this we are inspired strongly by J\'onsson and Tarski's theory of Boolean algebras with operators \cite{1951} and the duality between them and descriptive general frames \cite[Chapter 5: Algebras and General Frames]{blackburn_rijke_venema_2001}. Our central thesis is that in our case the appropriate base class---the analogue of Boolean algebras---must be more than just a class of \emph{ordered} structures but must record additional \emph{compatibility} data. This reflects the fact that the union of two partial functions is not always a function (and this determination can not be made solely from the inclusion/extension ordering).

In \cite{diff-rest1}, we investigated algebras of partial functions for a signature that we believe provides the necessary order and compatibility structure. The signature has two operations, both binary: the standard set-theoretic \emph{relative complement} operation and a \emph{domain restriction} operation. We gave and proved a finite equational axiomatisation for the class of isomorphs of such algebras of partial functions \cite[\begin{NoHyper}Theorem~5.7\end{NoHyper}]{diff-rest1}.

In the present paper we continue our project with an investigation of `discrete' duality---a term used for dualities requiring no topological information on the duals. A secondary component of our thesis relates specifically to such dualities. Recall the prototypical discrete duality between complete atomic Boolean algebras and sets, and that this extends to a duality between complete atomic Boolean algebras with completely additive operators and Kripke frames. A first observation is that these dualities are just specialisations of (contravariant) adjunctions, where we drop the completeness requirement on the algebra side. A second observation is that for Boolean algebras, the `atomic' condition that remains is an intrinsic/first-order characterisation of the more extrinsic/semantic condition of being \emph{completely representable} (arbitrary cardinality joins/meets become unions/intersections, respectively). Hence we posit the general principle that the correct class to use for a discrete adjunction is always the class of completely representable algebras.

Following this reasoning, in \cite{diff-rest1} we identified the completely representable algebras of our class. Just as for Boolean algebras, they are exactly the atomic ones \cite[\begin{NoHyper}Theorem~6.16\end{NoHyper}]{diff-rest1}. The main results of the present paper are the elaboration of a discrete adjunction between this class of atomic representable algebras and a certain class of set quotients (\Cref{t:adj}) and the extension of that theorem to algebras with additional operators (\Cref{thm:expansion}). We also show that, as for Boolean algebras, the monad induced by the adjunction gives an appropriate form of completion (the \emph{compatible completion}) of algebras (\Cref{p:completion}/\Cref{p:completion'}) and that the adjunction restricts to a duality on the compatibly complete algebras (\Cref{t:discrete-duality}/\Cref{c:extended-duality}).

\subsubsection*{Structure of paper} \Cref{preliminaries} contains preliminaries, including formal definitions of the classes of representable and of completely representable algebras. We recall the axiomatisations of these two classes as presented in \cite{diff-rest1}.

In \Cref{sec:duality}, we present and prove our central result: the adjunction between the atomic representable algebras and a category of set quotients (\Cref{t:adj}).

 \Cref{sec:completion} concerns completion. We define the notions of compatibly complete (\Cref{def:comp}) and of a compatible completion (\Cref{def:completion}), and we prove that compatible completions are unique up to isomorphism (\Cref{p:1_0}). We prove that the monad induced by our adjunction yields the compatible completion on atomic representable algebras (\Cref{p:completion}), and we conclude that the adjunction restricts to a duality on the compatibly complete atomic representable algebras (\Cref{t:discrete-duality}).
 
 \Cref{sec:operators} concerns additional operations. We define the notion of a compatibility preserving completely additive operator (\Cref{def:compatibility-preserving}) and extend the adjunction (\Cref{thm:expansion}), completion (\Cref{p:completion'}), and duality (\Cref{c:extended-duality}) results of the previous two sections to representable algebras equipped with such operators.

\section{Algebras of functions}\label{preliminaries}

Given an algebra $\algebra{A}$, when we write $a \in \algebra{A}$ or say that $a$ is an element of $\algebra{A}$, we mean that $a$ is an element of the domain of $\algebra{A}$. Similarly for the notation $S \subseteq \algebra{A}$ or saying that $S$ is a subset of $\algebra{A}$. We follow the convention that algebras are always nonempty. If $S$ is a subset of the domain of a map $\theta$ then $\theta[S]$ denotes the set $\{\theta(s) \mid s \in S\}$. We use $\join$ and $\meet$ respectively as our default notations for joins (suprema) and meets (infima).

We begin by making precise what is meant by partial functions and
algebras of partial functions. 
\begin{definition}
  Let $X$ and $Y$ be sets. A \defn{partial function} from $X$ to $Y$
  is a subset $f$ of $X \times Y$ validating
\begin{equation*}
  (x, y) \in f \textrel{and} (x, z) \in f \implies y = z.
\end{equation*}
If $X = Y$ then $f$ is called simply a partial function on $X$. 
For a partial function $f \subseteq X \times Y$, if $(x,y)$ belongs to
$f$ then we may write $y = f(x)$. 
Given such a partial function, its \defn{domain} is the set
\[\dom(f) \coloneqq \{x \in X \mid \exists \ y \in Y \from (x, y) \in f\}.\]
For any binary relation $R \subseteq X \times Y$, we write $R^{-1}$
for the relation $\{(y, x) \mid (x, y) \in R\}$. Notice that a partial
function $f \subseteq X \times Y$ is injective if and only if $f^{-1}$
is also a partial function. Finally, for any binary relations $R
\subseteq X \times Y$ and $S \subseteq Y \times Z$, we denote by $S
\circ R$ (or simply $SR$) the composition of $R$ and $S$:
\[S \circ R \coloneqq \{(x, z) \in X \times Z \mid \exists y \in Y \from (x,
  y) \in R \textrel{and}(y, z) \in S\}.\]
When $R$ and $S$ are partial functions, this is their usual composition.
\end{definition}

\begin{definition} 
  An \defn{algebra of partial functions} of the signature $\{-,
  \rest\}$ is a universal algebra $\algebra A = (A, -, \rest)$ where
  the elements of the universe $A$ are partial functions from some
  (common) set $X$ to some (common) set $Y$ and the interpretations of
  the symbols are given as follows:
  \begin{itemize}
    
  \item The binary operation $-$ is \defn{relative complement}:
    \[f - g \coloneqq \{(x, y) \in X \times Y \mid (x, y) \in
      f\textrel{and} (x, y) \not\in g\}.\]

  \item The binary operation $\rest$ is \defn{domain
      restriction}.\footnote{This operation has
        historically been called \emph{restrictive multiplication},
        where \emph{multiplication} is the historical term for
        \emph{composition}. But we do not wish to emphasise this
        operation as a form of composition.} It is the
    restriction of the second argument to the domain of the first;
    that is:
    \[ f \rest g \coloneqq \{(x, y) \in X \times Y \mid x \in \dom(f)
      \textrel{and} (x, y) \in g\}\text{.}\]
  \end{itemize}
\end{definition}
Note that in algebras of partial functions of the signature $\{-,
\rest\}$, the set-theoretic intersection of two elements $f$ and $g$
can be expressed as $f - (f - g)$. We use the symbol~$\cdot$ for this
derived operation.

We also observe that, without loss of generality, we may assume $X =
Y$ (a common stipulation for algebras of partial functions). Indeed,
if $\cA$ is a $\{-, \rest\}$-algebra of partial functions from $X$ to
$Y$, then it is also a $\{-, \rest\}$-algebra of partial functions
from $X \cup Y$ to $X \cup Y$. In this case, this
non-uniquely-determined single set is called `the'
\defn{base}. However, certain properties may not be preserved by
changing the base. For instance, while a partial function is injective
as a function on~$X$ if and only if it is injective as a function
on~$X'$, this is not the case for surjectivity.

\begin{definition}
  An algebra $\algebra A$ of the signature $\{ -, \rest\}$ is
  \defn{representable} (by partial functions) if it is isomorphic to
  an algebra of partial functions. An isomorphism from $\algebra A$ to
  an algebra of partial functions is a \defn{representation} of
  $\algebra A$.
\end{definition}

Just as for algebras of partial functions, for any $\{-, \rest\}$-algebra
$\cA$, we will consider the derived operation $\bmeet$ defined by
\begin{equation*}
  a \bmeet b \coloneqq a - (a - b).
\end{equation*}
In \cite{diff-rest1} it was shown that the
class of $\{-, \rest\}$-algebras that is representable by partial
functions is axiomatised by the following set of
equations.
\begin{enumerate}[label = (Ax.\arabic*), leftmargin = *]
\item \label{schein1} $a - (b - a) = a$
\item \label{commutative} 
$a \bmeet b = b \bmeet a$
\item \label{schein3} $(a - b) - c = (a - c) - b$
\item \label{eq:8} $(a \rest c)\bmeet(b \rest c) = (a \rest b) \rest
  c$
\item \label{lifting}$ (a \bmeet b) \rest a = a \bmeet b$
\end{enumerate}

\begin{theorem}[{\cite[\begin{NoHyper}Theorem~5.7\end{NoHyper}]{diff-rest1}}]
  The class of $\{ -, \rest\}$-algebras representable by partial
  functions is a variety, axiomatised by the finite set of equations
  \ref{schein1} -- \ref{lifting}.
\end{theorem}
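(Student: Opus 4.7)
The proof splits into soundness and completeness, of which soundness is essentially routine. For each of \ref{schein1}--\ref{lifting} one unfolds the set-theoretic definitions of $-$ and $\rest$ and verifies the equality directly. For example, \ref{schein1} holds because $g - f$ is disjoint from $f$, and \ref{eq:8} amounts to the fact that $\dom(a \rest b) = \dom(a) \cap \dom(b)$ for partial functions $a$, $b$.

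For completeness, the plan is as follows. I would first observe that \ref{schein1}, \ref{commutative}, and \ref{schein3} together make $(A, -)$ a subtraction algebra in the sense of Abbott and Schein, so that the derived operation $\bmeet$ equips $A$ with a meet-semilattice structure under the partial order $a \leq b \iff a \bmeet b = a$, and $a - b$ behaves as a relative complement inside the principal downset $\down a$. From classical subtraction algebra theory this already gives a representation of the $\{-\}$-reduct: $(A, -, \bmeet)$ embeds into $(\powerset(Z), \minusset, \cap)$ where $Z$ may be taken to be the set of ultrafilters of $(A, \bmeet)$, and in particular this map separates the points of $A$.

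The key step is to upgrade this to a representation that respects $\rest$ as well. I would take the base of the representation to be the set $X$ of ultrafilters of $(A, \bmeet)$, equipped with an equivalence relation $\sim$ identifying ultrafilters that agree on the `domain part' of the algebra (isolated from $\rest$ with the aid of \ref{lifting}). For each $a \in A$, define $\hat{a} \subseteq X \times X$ by placing $(F, G) \in \hat{a}$ precisely when $a \in G$ and $F \sim G$ in the sense appropriate to $a$. The main obstacle is pinpointing $\sim$ so that three things hold simultaneously: each $\hat{a}$ is a partial function, $\widehat{a - b} = \hat{a} - \hat{b}$, and $\widehat{a \rest b} = \hat{a} \rest \hat{b}$. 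Partial-functionality should be forced by \ref{lifting}, the compatibility with $-$ should be automatic once $\sim$ is an equivalence and ultrafilters are prime for $-$, and the compatibility with $\rest$ should use \ref{eq:8} to translate the semilattice identity on the right-hand side of that axiom into the equality $\dom(\hat{a} \rest \hat{b}) = \dom(\hat{a}) \cap \dom(\hat{b})$ that is required for $\rest$ to behave correctly. Injectivity of $a \mapsto \hat{a}$ would then follow from ultrafilter separation: if $a \neq b$ then at least one of $a - b$, $b - a$ is contained in some ultrafilter, which furnishes a point distinguishing $\hat{a}$ from $\hat{b}$.
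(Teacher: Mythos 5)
First, a point of reference: the paper does not prove this theorem itself---it is imported verbatim from \cite{diff-rest1}---so there is no in-paper proof to compare against. Judged on its own terms, your soundness direction is fine (and your reading of which axiom does what is accurate: \ref{eq:8} is exactly the statement that $\dom(a\rest b)=\dom(a)\cap\dom(b)$, and \ref{lifting} is the axiom that encodes single-valuedness). Your proposed architecture for completeness is also the right one, and it is visibly the non-atomic version of the machinery this paper recalls in Section 3: the base is a set of point-like objects, there is a ``same domain'' equivalence $\sim$ on them (the abstract version is $a\preceq_\cA b\iff a\le b\rest a$), and an element $a$ is sent to the partial function $\{([F],F)\mid a\in F\}$ from equivalence classes to points---compare $\eta_\cA(a)=\{([x],x)\mid x\in\At(\cA),\ x\le a\}$ in \Cref{t:adj}.

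The gap is that the entire technical content of the completeness direction is concentrated in the step you explicitly defer: you never define $\sim$, and all three properties you list (single-valuedness of each $\hat a$, preservation of $-$, preservation of $\rest$) hinge on that definition. Concretely: (i) ``ultrafilters of $(A,\bmeet)$'' is not yet a well-defined notion for a mere meet-semilattice---one needs, say, proper filters $F$ whose trace $F\cap a^\downarrow$ is an ultrafilter of the Boolean algebra $a^\downarrow$ for each $a\in F$, and one must prove enough of these exist to separate points; (ii) single-valuedness requires showing that two distinct such filters both containing $a$ cannot be $\sim$-equivalent, which is where \ref{lifting} must be deployed through whatever definition of $\sim$ you choose (the atomic analogue works because $\le$ and $\preceq_\cA$ coincide on $a^\downarrow$, but that fact itself has to be derived from the axioms); and (iii) preservation of $\rest$ requires, in the forward direction, an existence argument---given $b\in F$ and $a\rest b\in F$ one must \emph{construct} a filter $G\sim F$ with $a\in G$---which is a Zorn's-lemma extension argument constrained by the equivalence relation, not something that falls out of primeness. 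Saying these ``should be forced'' by the axioms names the target but does not hit it; as written, the proposal is a correct plan rather than a proof.
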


Algebras satisfying axioms \ref{schein1} -- \ref{schein3} are called
\defn{subtraction algebras} and it is known that in those algebras the $\bmeet$ operation gives a semilattice structure (which we view as a meet-semilattice), and the
downsets of the form $a^\downarrow \coloneqq\{x \mid x \leq a\}$ are Boolean
algebras~\cite{schein1992difference}. In particular, the same holds
for representable $\{-, \rest\}$-algebras.

It has long been known that in a representable algebra the operation
$\rest$ is associative (see, for example,
\cite{vagner1962}). Moreover, the inequality $a \rest b \leq b$ is
valid (an algebraic proof appears in \cite{diff-rest1}) and will be
often used without further mention.

The next two definitions apply to any function between posets $\cP$
and $\cQ$. So in particular, these definitions apply to homomorphisms
of Boolean algebras and homomorphisms of representable $\{ -,
\rest\}$-algebras.  We denote meets and joins in~$\cP$ by~$\meet$
and~$\join$ respectively, and meets and joins in~$\cQ$ by~$\bigcap$
and~$\bigcup$ respectively.

\begin{definition}\label{def:meet}
  A function $h \from \cP \to \cQ$ is \defn{meet complete} if, for every
  nonempty subset~$S$ of~$\algebra{P}$, if $\meet S$ exists, then so
  does $\bigcap h[S]$ and
  \[h(\meet S) = \bigcap h[S]\text{.}\]
\end{definition}

\begin{definition}\label{def:join}
  A function $h \from \cP \to \cQ$ is \defn{join complete} if, for every
  subset~$S$ of~$\algebra{P}$, if $\join S$ exists, then so does
  $\bigcup h[S]$ and
  \[h(\join S) = \bigcup h[S].\]
\end{definition}

Note that $S$ is required to be nonempty in \Cref{def:meet}, but not
in \Cref{def:join}. For homomorphisms of Boolean algebras, being
meet complete is equivalent to being join complete, and in \cite{diff-rest1} we showed that the same is true for homomorphisms of representable $\{-, \rest\}$-algebras (\begin{NoHyper}Corollary~6.5\end{NoHyper} and \begin{NoHyper}Corollary~6.6\end{NoHyper} there). So in these cases we may simply describe such a
homomorphism using the adjective \defn{complete}.\footnote{In the case
  of representations of Boolean algebras as fields of sets, the
  adjectives  \emph{strong} and  \emph{regular} have also been used.}
  
  \begin{definition}
A \defn{complete representation} of an algebra of the signature $\{-, \rest\}$ is a representation $\theta$, with base $X$ say, such that $\theta$ forms a complete homomorphism when viewed as an embedding into the algebra of \emph{all} partial functions on $X$. An algebra is \defn{completely representable} if it has a complete representation. 
\end{definition}

Complete representations have been studied previously in the context of various different forms of representability: by sets \cite{egrot}, by binary or higher-order relations \cite{MR1330986, journals/jsyml/HirschH97a}, or by partial functions \cite{complete}.
   
   \begin{definition}
Let $\algebra{P}$ be a poset with a least element, $0$. An \defn{atom} of $\algebra{P}$ is a minimal nonzero element of $\algebra{P}$. We write $\At(\algebra{P})$ for the set of atoms of $\algebra P$. We say that $\algebra{P}$ is \defn{atomic} if every nonzero element is greater than or equal to an atom. 
\end{definition}

   In \cite{diff-rest1}, it was shown that a $\{-, \rest\}$-algebra is completely representable if and only if it is both representable and atomic.
   
   \begin{theorem}[{\cite[\begin{NoHyper}Theorem~6.16\end{NoHyper}]{diff-rest1}}]
   The class of $\{ -, \rest\}$-algebras that are completely representable by partial functions is axiomatised by the finite set of equations \ref{schein1} -- \ref{lifting} together with the $\forall\exists\forall$ first-order formula stating that the algebra is atomic.
\end{theorem}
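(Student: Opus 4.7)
The claim comprises two implications. Since every complete representation is in particular a representation, the axioms \ref{schein1}--\ref{lifting} are necessary by the representation theorem recalled above, so the remaining work lies in establishing that a representable algebra is completely representable if and only if it is atomic.

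For the ``only if'' direction, let $\theta$ be a complete representation of $\algebra{A}$ on base~$X$ and let $a \in \algebra{A}$ be nonzero. Pick $(x, y) \in \theta(a)$ and set
\[ S \coloneqq \{b \in \algebra{A} \mid b \leq a \text{ and } (x, y) \in \theta(b)\}. \]
By meet-completeness of $\theta$, the meet $c \coloneqq \meet S$ exists and satisfies $(x, y) \in \theta(c)$, so $c \neq 0$. Given $0 < d \leq c$, either $(x, y) \in \theta(d)$---giving $d \in S$ and hence $d = c$---or $(x, y) \in \theta(c) \setminus \theta(d) = \theta(c - d)$, so $c - d \in S$, forcing $c \leq c - d$ and hence $d = 0$. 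Thus $c$ is an atom below~$a$ and $\algebra{A}$ is atomic.

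For the ``if'' direction, my plan is to construct a complete representation using atoms as points. Define the binary relation $\tie$ on $\At(\algebra{A})$ by $\alpha \tie \beta \iff \alpha \rest \beta \neq 0$, which in any representation captures that $\alpha$ and $\beta$ share a domain point. After proving $\tie$ is an equivalence, take as base the set of $\tie$-classes; assign each atom $\alpha$ the pair $(x_\alpha, y_\alpha) \coloneqq ([\alpha]_\tie, \alpha)$, using the atom itself as the range coordinate to secure injectivity; and set
\[ \theta(a) \coloneqq \{(x_\alpha, y_\alpha) \mid \alpha \in \At(\algebra{A}), \ \alpha \leq a\}. \]
Atomicity then ensures that joins and meets of elements of $\algebra{A}$ correspond to unions and intersections of the associated atom sets, so completeness is automatic once $\theta$ is verified to be a representation.

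The main obstacles are algebraic. First, transitivity of $\tie$: deducing $\alpha \rest \gamma \neq 0$ from $\alpha \rest \beta \neq 0$ and $\beta \rest \gamma \neq 0$ should flow from axiom~\ref{eq:8}, together with a supporting lemma that $\alpha \rest \beta$ is itself an atom whenever $\alpha \tie \beta$. Second, $\theta(a)$ must genuinely be a partial function, which requires that no two distinct atoms $\alpha, \beta \leq a$ satisfy $\alpha \tie \beta$. This will follow from monotonicity of $\rest$ in its second argument together with the consequence of~\ref{lifting} that $\alpha \leq a$ forces $\alpha \rest a = \alpha$: indeed, then $\alpha \rest \beta \leq \alpha \rest a = \alpha$, atomicity of $\alpha$ yields $\alpha \rest \beta = \alpha$, hence $\alpha \leq \beta$, and atomicity of $\beta$ forces $\alpha = \beta$. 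The remaining verifications that $\theta$ preserves~$-$ and~$\rest$ then reduce to routine atom-wise checks, using the Boolean structure on the downset $a^\downarrow$ for~$-$ and the supporting lemma for~$\rest$.
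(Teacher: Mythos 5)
This theorem is imported from the prequel \cite{diff-rest1} and is not proved in the present paper, so there is no in-paper argument to compare against line by line; judged on its own terms, your proposal has a genuine gap in the ``completely representable $\implies$ atomic'' direction. You write ``by meet-completeness of $\theta$, the meet $c \coloneqq \meet S$ exists''. But meet-completeness (\Cref{def:meet}) is a conditional preservation property: \emph{if} $\meet S$ exists in $\algebra A$, \emph{then} $\bigcap\theta[S]$ exists and equals $\theta(\meet S)$. It asserts nothing about the existence of $\meet S$, and indeed your $S$ is exactly an ultrafilter of the Boolean algebra $a^\downarrow$, so the existence of its meet (as a nonzero element) is precisely the principality of that ultrafilter---which is what atomicity amounts to. As written the step is circular. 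The standard repair uses \emph{join}-completeness on the complementary set $T \coloneqq \{b \leq a \mid (x,y) \notin \theta(b)\}$: if $\join T = a$ then join-completeness forces $(x,y) \in \theta(a) = \bigcup\theta[T]$, a contradiction, so $T$ has an upper bound $u$ with $a \not\leq u$; one then checks that $a - (u \bmeet a)$ is an atom below $a$ (any nonzero $d$ below it lies in $S$, and so does its relative complement unless that complement is $0$, whence $d = a - (u \bmeet a)$). Your verification that $c$ \emph{would} be an atom, granted its existence, is fine, but the existence is the whole point.

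The ``if'' direction is sound in outline and coincides with the construction this paper actually uses: your relation $\tie$ on atoms ($\alpha \tie \beta \iff \alpha \rest \beta \neq 0$) is exactly $\sim_{\algebra A}$ restricted to $\At(\algebra A)$, and your map $\theta$ is the map $\eta_{\algebra A}(a) = \{([x],x) \mid x \in \At(\algebra A),\ x \leq a\}$ of \Cref{p:completion} (Corollary~6.17 of \cite{diff-rest1}). Your argument that $\theta(a)$ is a partial function is correct, and the deferred items (transitivity of $\tie$, the fact that $\alpha \rest \beta$ is an atom when nonzero, preservation of $\rest$, and the passage from atomic to atomistic needed for join-completeness of $\theta$) are genuine but routine, corresponding to Lemmas~6.13 and 6.14 of \cite{diff-rest1}. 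So the construction is the right one; the direction that needs repair is the other.
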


This theorem justifies our interest in the atomic representable
algebras, and also indicates that complete homomorphism is the
appropriate notion of morphism between these algebras.

This is a good place to define two further terms that we will use later.

   \begin{definition}\label{def:atomistic}
Let $\algebra{P}$ be a poset. A subset $S$ of $\algebra P$ is \defn{join dense} (in $\algebra P$) if each $p \in \algebra P$ is the join $\join T$ of some subset $T$ of $S$. The poset $\algebra P$ is \defn{atomistic} if $\At(\algebra{P})$ is join dense in $\algebra P$.
\end{definition}

Of course, atomic implies atomistic, for any poset. For representable $\{-, \rest\}$-algebras, the converse is also true \cite[\begin{NoHyper}Lemma~6.13\end{NoHyper}]{diff-rest1}, generalising the same statement for Boolean algebras.

\section{Discrete adjunction for atomic representable
  algebras}\label{sec:duality}
In this section we exhibit a contravariant adjunction between the atomic
representable algebras, $\aralg$, and a certain category $\setq$ whose
objects are quotients of sets.

We use $\twoheadrightarrow$ to indicate a (total) surjective function between sets, and we use $\hookrightarrow$ to indicate an embedding of algebras. The notation $\parrow$ indicates a partial function. 
 We may at times use a bracket-free notation for applications of functors to morphisms, for example, $Fh$ in place of $F(h)$.

Before giving the claimed adjunction, we define the two categories involved.

\begin{definition}
  We denote by $\aralg$ the category whose objects are atomic
   $\{-, \rest\}$-algebras representable by partial functions, and whose morphisms are
  complete homomorphisms of $\{-, \rest\}$-algebras.
\end{definition}

\begin{definition}\label{category2}
  We denote by $\setq$ the category whose objects are set quotients (that is, surjective functions between sets)
  $\pi \from X \twoheadrightarrow X_0$, and where a morphism from $\pi \from  X
  \twoheadrightarrow X_0$ to $\rho \from Y \twoheadrightarrow Y_0$ is a
  partial function $\varphi \from X \parrow Y $ satisfying the following
  conditions:
  \begin{enumerate}[label = (Q.\arabic*)]
  \item\label{item:Q1} 
  $\varphi$ \defn{preserves equivalence}: if both $\varphi(x)$ and $\varphi(x')$ are defined, then
  \[ \pi(x) = \pi(x') \implies \rho(\varphi(x)) = \rho(\varphi(x')).\]
  In particular, $\varphi$ induces a partial function
  $\widetilde\varphi \from X_0 \parrow Y_0$ given by
  \[\widetilde\varphi \coloneqq \{(\pi(x), \rho(\varphi(x))) \mid x \in
    \dom (\varphi)\}.\]
\item\label{item:Q2} $\varphi$ is \defn{fibrewise injective}: for
  every $(x_0, y_0) \in \widetilde \varphi$, the restriction and
  co-restriction of $\varphi$ induces an injective partial map
  \[\varphi_{(x_0, y_0)} \from  \pi^{-1}(x_0)\parrow \rho^{-1}(y_0),\]
\item\label{item:Q3} $\varphi$ is \defn{fibrewise surjective}: for
  every $(x_0, y_0) \in \widetilde \varphi$, the induced partial map
  $\varphi_{(x_0, y_0)}$ is surjective (that is, the image of $\varphi_{(x_0, y_0)}$ is the whole of $\rho^{-1}(y_0)$).\footnote{Note
      that in the context of partial maps, the conjunction of
      `injective' and `surjective' is not `bijective' in the sense of
      a one-to-one correspondence.}
  \end{enumerate}
\end{definition}


In what follows, we define two functors $F \from \aralg \to
\setq^{\operatorname{op}}$ and $G \from \setq^{\operatorname{op}} \to
\aralg$, which we then show to form an adjunction
(\Cref{t:adj}).

Before defining $F$, we first recall some notation from \cite{diff-rest1}. 

\begin{definition}\label{sec:equiv}
  Given a representable $\{-, \rest\}$-algebra $\algebra A$, the
  relation $\preceq_\cA$ on $\algebra A$ is defined by
  \[a \preceq_\cA b \iff a \leq b \rest a\]
  and is a preorder.  We denote by $\sim_\cA$ the equivalence
  relation induced by $\preceq_\cA$, and for a given $a \in \cA$ we
  use $[a]$ to denote the equivalence class of~$a$.
\end{definition}

In the case that $\algebra A$ is an actual $\{-, \rest\}$-algebra of partial functions, the relation $\preceq_\cA$ is the \emph{domain inclusion} relation $f \preceq_\cA g \iff \dom(f) \subseteq \dom(g)$.\footnote{See \cite{Schein1970} for axiomatisations for various signatures containing domain inclusion (as a fundamental relation).}

 The next result summarises some facts about $\preceq_\cA$ that were
proved in~\cite{diff-rest1} and will be used later.

\begin{proposition}\label{p:1}
  The following statements hold for a representable $\{-,
  \rest\}$-algebra~$\cA$.
  \begin{enumerate}[label = (\alph*)]
  \item\label{item:2} The relation $\leq$ is contained in
    $\preceq_\cA$, and $[0] = \{0\}$.
  \item\label{item:3} The poset $\cA/{\sim_\cA}$ is a meet-semilattice
    (actually a subtraction algebra) with meet given by $[a]\wedge [b]
    = [a \rest b]$.
  \item\label{item:4} The relations $\leq$ and $\preceq_\cA$ coincide
    in each downset $a^\downarrow$. Moreover, the assignment $b
    \mapsto [b]$ provides an isomorphism between the Boolean algebras
    $a^\downarrow$ and $[a]^\downarrow$.
  \end{enumerate}
\end{proposition}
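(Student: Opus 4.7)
The plan is to invoke the representation theorem so that each claim reduces to a routine set-theoretic verification in a concrete algebra of partial functions. Fix a representation of $\cA$ as an algebra $\cB$ of partial functions on some base $X$. The preorder $\preceq_\cA$ is defined equationally, so it is transported by the isomorphism to the corresponding preorder on $\cB$, which is exactly the domain-inclusion preorder (since $f \leq g \rest f$ in $\cB$ holds precisely when $\dom(f) \subseteq \dom(g)$); consequently $\sim_\cA$ is transported to equality of domains. Since all three assertions concern only the partial order, the quotient by $\sim_\cA$, and the operation $\rest$, it suffices to prove them in~$\cB$.

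For (a), if $f \leq g$ in $\cB$ then $f \subseteq g$ as sets of pairs, so $\dom(f) \subseteq \dom(g)$ and $f \preceq g$; and since only the empty partial function has empty domain, any $f$ with $f \sim 0$ must equal the empty function, giving $[0] = \{0\}$. For (b), the identity $\dom(f \rest g) = \dom(f) \cap \dom(g)$ exhibits $[f \rest g]$ as the greatest lower bound of $[f]$ and $[g]$ in the quotient (which orders classes by domain inclusion), and the subtraction-algebra axioms for the quotient follow by checking on representatives, each reducing to an elementary identity between subsets of~$X$.

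For (c), the key observation is that two sub-partial-functions of a common partial function coincide if and only if they have the same domain, which forces $\preceq$ to collapse to $\leq$ on $a^\downarrow$. The inverse of $b \mapsto [b]$ from $a^\downarrow$ to $[a]^\downarrow$ is $[c] \mapsto c \rest a$: this is well-defined on $\sim_\cA$-classes since $\dom(c \rest a) = \dom(c) \cap \dom(a)$ depends only on $\dom(c)$, lands in $a^\downarrow$ because $c \rest a \leq a$, and is inverse to $b \mapsto [b]$ by the key observation just made. The main obstacle is verifying this is a \emph{Boolean} algebra isomorphism rather than only an order isomorphism; one must check that the complement of $f$ inside $a^\downarrow$, namely $a - f$, is sent to the complement of $[f]$ inside $[a]^\downarrow$, which follows from $\dom(a - f) = \dom(a) \setminus \dom(f)$ together with the matching formula for complements in the quotient's downset.
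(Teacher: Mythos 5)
Your argument is correct, but it takes a genuinely different route from the paper: the paper offers no proof of this proposition at all, presenting it as a summary of facts established algebraically (from the axioms) in the prequel paper. You instead invoke the representation theorem once and reduce everything to set-theoretic verifications in a concrete algebra of partial functions, using the observation---also recorded in the paper---that $\preceq_\cA$ becomes domain inclusion under any representation. The individual verifications all check out: $f \leq g \rest f$ iff $\dom(f) \subseteq \dom(g)$; $\dom(f \rest g) = \dom(f) \cap \dom(g)$; subfunctions of a common $a$ are determined by their domains, which collapses $\preceq_\cA$ to $\leq$ on $a^\downarrow$ and makes $[c] \mapsto c \rest a$ a two-sided inverse to $b \mapsto [b]$; and $\dom(a - f) = \dom(a) \setminus \dom(f)$ for $f \leq a$ gives the complement formula (in fact, once both downsets are known to be Boolean algebras, an order isomorphism is automatically a Boolean isomorphism, so that last check is a bonus). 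What the semantic route buys is brevity and transparency; what it costs is that it is only available once the nontrivial representation theorem is in hand, whereas the axiomatic proofs in the prequel do not presuppose it. One loose end: for the parenthetical claim that $\cA/{\sim_\cA}$ is a subtraction algebra, you never say which subtraction operation the quotient carries; you should exhibit it explicitly, e.g.\ $[f] - [g] \coloneqq [f - (g \rest f)]$, which is well defined because $\dom(f - (g \rest f)) = \dom(f) \setminus \dom(g)$ depends only on the domains, and which therefore satisfies the subtraction-algebra axioms because the domains form a collection of sets closed under set difference.
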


\subsection{The functor \texorpdfstring{$F \from \aralg \to
    \setq^{\operatorname{op}}$}{F}}\label{sec:F}\hfill\par
\smallskip We let $F \from \aralg \to \setq^{\operatorname{op}}$ be
defined as follows. Given an atomic representable algebra $\cA$, the
set quotient $F(\cA)$ is the canonical projection $\pi_\cA \from
\At(\cA) \twoheadrightarrow \At(\cA)/{\sim_\cA}$. This defines $F$ on
the objects.
By~\Cref{p:1}\ref{item:4}, for a Boolean algebra $\cB$, the relation
$\sim_\cB$ is the identity and thus the restriction of $F$ to atomic
Boolean algebras is simply $\At(\_)$.
For defining $F$ on the morphisms, we let $h \from \cA \to \cB$ be a
complete homomorphism of atomic representable algebras. By
\cite[\begin{NoHyper}Lemma~6.3\end{NoHyper}]{diff-rest1}, for each $a \in \cA$, the restriction of $h$ induces a
complete homomorphism of (atomic) Boolean algebras $h_a \from a^{\downarrow}
\to h(a)^{\downarrow}$. We denote by $\varphi_a:
\At(h(a)^{\downarrow}) \to \At(a^{\downarrow})$ its discrete
dual. Recall that $\varphi_a$ is completely determined by the
following Galois connection.
\begin{equation}
  \forall \ a' \in a^{\downarrow},\ y \in \At(h(a)^{\downarrow})\quad
  (\varphi_a(y) \le a' \iff y \le h(a'))\label{eq:disc-dual}
\end{equation}
In particular, for every $a \in \cA$, the map $h \circ \varphi_a$ is a closure
operator.

Then $Fh \from \At(\cB) \parrow \At(\cA)$ has domain $\cB_0 = \bigcup_{a
  \in \cA}\At(h(a)^{\downarrow})$, and for $y \in
\At(h(a)^{\downarrow})$, we set $Fh(y) =
\varphi_a(y)$.
We observe that this is a well-defined map, that is, if $a_1, a_2 \in
\cA$ are such that $y \in \At(\cB)$ is below $h(a_1)$ and $h(a_2)$,
then $\varphi_{a_1}(y) = \varphi_{a_2}(y)$. Indeed, since $h$ is a
homomorphism, we have $y \leq h(a_1\cdot a_2)$, and
by~\eqref{eq:disc-dual} this yields $\varphi_{a_2}(y) \leq a_1\cdot
a_2$. Using again~\eqref{eq:disc-dual} twice, we have
\[\varphi_{a_2}(y) \leq \varphi_{a_2}(y) \iff y \leq
  h(\varphi_{a_2}(y)) \iff \varphi_{a_1}(y) \leq \varphi_{a_2}(y).\]
Similarly, we can prove the inequality $\varphi_{a_2}(y) \leq
\varphi_{a_1}(y)$, thereby concluding that $Fh$ is well-defined.
As a consequence of $h \circ \varphi_a$ being a closure operator for
each $a$, we have the following.
\begin{equation}
  \label{eq:7}
  \forall\ y \in \cB_0\quad y \leq (h \circ Fh)(y)
\end{equation}

We now prove that $Fh$ defines a morphism in $\setq$ from $\pi_\cB$ to
$\pi_\cA$.  Let $y_1, y_2 \in \cB_0$ be $\sim_\cB$-equivalent, that
is, $y_1 = y_2 \rest y_1$ and $y_2 = y_1 \rest y_2$. Since $y_i \leq
(h\circ Fh)(y_i)$, for $i = 1,2$, using the fact that $\rest$ is order
preserving in both coordinates and the fact that $h$ is a
homomorphism, we have
\[y_1 = y_2 \rest y_1 \leq h(Fh(y_2)) \rest h(Fh(y_1))= h(Fh(y_2)
  \rest Fh(y_1)).\] We may apply~\eqref{eq:disc-dual} to obtain
\[Fh(y_1) \leq Fh(y_2) \rest Fh(y_1),\]
that is, $Fh(y_1)\preceq_\cA Fh(y_2)$. Likewise, we can show that
$Fh(y_2)\preceq_\cA Fh(y_1)$, and thus $Fh(y_1) \sim_\cA Fh(y_2)$. This proves~\ref{item:Q1}.

Finally, to conclude that $Fh$ defines a morphism in $\setq$, we only
need to show that for every $(y_0, x_0) \in \widetilde{Fh}$, the partial map
$Fh_{(y_0, x_0)} \from \pi_\cB^{-1}(y_0) \parrow \pi_\cA^{-1}(x_0)$ is
both injective and surjective. (Recall that the subscript $(y_0, x_0)$ indicates restricting and co-restricting to the fibres of $y_0$ and $x_0$ respectively; see \Cref{category2}\ref{item:Q2}.) Injectivity of $Fh_{(y_0, x_0)}$ is a
consequence of \Cref{p:1}\ref{item:4} together with the observation
that if $y_1, y_2 \in \cB_0$ are such that $x\coloneqq Fh(y_1) = Fh(y_2)$,
then $y_1$ and $y_2$ have the common upper bound $h(x)$. 

To see that $Fh_{(y_0, x_0)}$ is surjective, take $x \in
\pi_\cA^{-1}(x_0)$.  As $\widetilde{Fh}$ is defined on $y_0$, by the
definition of $\widetilde{Fh}$ from $Fh$ we know $Fh$ is defined on at
least one element $y$ of $\pi_\cB^{-1}(y_0)$, and that $Fh(y)
\sim_{\algebra A} x$. We show that $Fh_{(y_0, x_0)}(y \rest h(x)) =
x$. Using~\eqref{eq:7} and then applying the fact that $h$ is a
homomorphism to $Fh(y) \sim_{\algebra A} x$, we obtain $y \leq (h\circ
Fh)(y) \sim_{\algebra B} h (x)$, from which we obtain $y \preceq_\cB
h(x)$. By
\cite[\begin{NoHyper}Lemma~6.14(b)\end{NoHyper}]{diff-rest1}, $y \rest
h(x)$ is an atom of $\algebra B$ (hence of $h(x)^\downarrow$), which
is $\sim_\cB$-equivalent to $y$.  From $y \rest h(x) \leq h(x)$ we
obtain $\varphi_{x}(y \rest h(x)) \leq x$ and hence $Fh(y \rest h(x))
= x$. Since $y \rest h(x) \sim_{\algebra B} y$ we have $Fh_{(y_0,
  x_0)}(y \rest h(x)) = x$, as required.

To summarise, we have proved the following.

\begin{proposition}
  There is a functor \[F \from \aralg \to \setq^{\operatorname{op}}\]
  that maps an atomic representable $\{-,\rest\}$-algebra $\cA$ to
  the canonical projection $\pi_\cA \from \At(\cA) \twoheadrightarrow
  \At(\cA)/{\sim_\cA}$.
\end{proposition}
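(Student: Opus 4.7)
The construction of $F$ on objects and on morphisms has already been given in the preceding paragraphs, together with verifications that each $Fh$ is well-defined and satisfies the three conditions~\ref{item:Q1}--\ref{item:Q3} defining a morphism of $\setq$. So the remaining task is precisely to verify the two functoriality axioms: that $F$ preserves identities and (contravariantly) preserves composition.

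Preservation of identities should be straightforward. When $h = \mathrm{id}_\cA$, for each $a \in \cA$ the restriction $h_a \from a^\downarrow \to a^\downarrow$ is the identity on the Boolean algebra $a^\downarrow$, so its discrete dual $\varphi_a$ is the identity on $\At(a^\downarrow)$. Hence $F(\mathrm{id}_\cA)$ is the total identity function on $\At(\cA)$, which is the identity morphism on $\pi_\cA$ in $\setq$.

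For composition, let $h \from \cA \to \cB$ and $g \from \cB \to \cC$ be complete homomorphisms in $\aralg$, and write $\varphi^h_a$, $\varphi^g_b$, and $\varphi^{g \circ h}_a$ for the relevant discrete duals. The main tool is the known contravariant functoriality of discrete duality for complete atomic Boolean algebras, applied to the Boolean algebra morphisms $h_a \from a^\downarrow \to h(a)^\downarrow$ and $g_{h(a)} \from h(a)^\downarrow \to g(h(a))^\downarrow$, whose composite is $(g \circ h)_a$. This yields $\varphi^{g \circ h}_a = \varphi^h_a \circ \varphi^g_{h(a)}$, so for every $z \in \At(g(h(a))^\downarrow)$ one has $F(g \circ h)(z) = \varphi^h_a(\varphi^g_{h(a)}(z)) = Fh(Fg(z))$.

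The step I expect to require the most care is matching the domains of $F(g \circ h)$ and $Fh \circ Fg$ as partial functions on $\At(\cC)$. The inclusion $\dom F(g \circ h) \subseteq \dom(Fh \circ Fg)$ is immediate: if $z \leq g(h(a))$ then the choice $b = h(a)$ puts $z$ in $\dom Fg$ and $Fg(z) = \varphi^g_{h(a)}(z) \leq h(a)$, so $Fg(z) \in \dom Fh$. For the reverse inclusion, suppose $z \leq g(b)$ and $\varphi^g_b(z) \leq h(a)$. Since also $\varphi^g_b(z) \leq b$, we have $\varphi^g_b(z) \leq b \bmeet h(a)$, and applying the Galois connection~\eqref{eq:disc-dual} with $b' = b \bmeet h(a) \in b^\downarrow$ yields $z \leq g(b \bmeet h(a)) \leq g(h(a))$, placing $z$ in $\dom F(g \circ h)$. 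Combined with the pointwise agreement above, this gives $F(g \circ h) = Fh \circ Fg$, which is the required equality in $\setq^{\operatorname{op}}$.
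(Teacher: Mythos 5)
Your proposal is correct, but it covers a different portion of the verification than the paper does. The paper's ``proof'' of this proposition is the whole of the preceding discussion in Section~3.1: it constructs $F$ on objects and morphisms, checks that $Fh$ is well defined, and verifies conditions \ref{item:Q1}--\ref{item:Q3}; the functoriality axioms themselves are left implicit as routine. You take all of that as given and instead supply exactly the omitted part. Your arguments are sound: the identity case follows since each $\varphi_a$ is the discrete dual of the identity on $a^\downarrow$ and the domain $\bigcup_{a\in\cA}\At(a^\downarrow)$ is all of $\At(\cA)$; for composition, the identity $\varphi^{g\circ h}_a=\varphi^h_a\circ\varphi^g_{h(a)}$ is immediate from the Galois connection~\eqref{eq:disc-dual} (or from functoriality of Boolean discrete duality), and your domain-matching argument is the genuinely nontrivial point --- in particular the reverse inclusion, where you pass from $\varphi^g_b(z)\leq h(a)$ to $\varphi^g_b(z)\leq b\bmeet h(a)$ and then apply~\eqref{eq:disc-dual} to get $z\leq g(b\bmeet h(a))\leq g(h(a))$, is exactly the right use of the well-definedness of $Fg$ across different witnesses $b$. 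So your text and the paper's are complementary: the paper's effort goes into showing $Fh$ lands in $\setq$ at all, while yours goes into showing $F$ respects identities and composition; together they form a complete proof, and neither is redundant given the other.
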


\subsection{The functor \texorpdfstring{$G \from \setq^{\operatorname{op}} \to
\aralg$}{G}}\hfill\par
\smallskip We now define the functor $G \from
\setq^{\operatorname{op}} \to \aralg$. For a set quotient $\pi \from X
\twoheadrightarrow X_0$, we let $G(\pi) = \cA_\pi$ be the algebra of
partial functions consisting of all partial functions $f \from X_0
\parrow X $ that are a subset of $\pi^{-1} = \{(\pi (x), x) \mid x \in X
\}$.  Since for every $f, g \in \cA_\pi$, both $f-g$ and $g \rest f$
are subsets of $f$, and $\cA_\pi$ is closed under subsets, $f-g$ and
$g \rest f$ belong to $\cA_\pi$.
\begin{lemma}
  For every quotient $\pi \from X \twoheadrightarrow X_0$, the $\{-,
  \rest\}$-algebra $\cA_\pi$ is atomic and representable.
\end{lemma}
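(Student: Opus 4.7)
The plan is to observe first that representability is essentially free: by construction, $\cA_\pi$ already consists of partial functions from $X_0$ to $X$, equipped with the set-theoretic operations $-$ and $\rest$, so it suffices to check that $\cA_\pi$ is closed under these operations (which the excerpt already notes, since both $f - g$ and $g \rest f$ are subsets of $f$, while $\cA_\pi$ is closed under taking subsets that happen to be partial functions — automatic here, as any subset of a partial function is again a partial function). Thus $\cA_\pi$ is an algebra of partial functions, and the identity map is a representation of it.

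For atomicity, the main task is to identify the atoms explicitly. I would first note that, as in any $\{-,\rest\}$-algebra of partial functions, the derived meet $f \cdot g = f - (f - g)$ coincides with set-theoretic intersection, so the induced partial order $\leq$ on $\cA_\pi$ is inclusion of partial functions, and the zero element is the empty partial function $\emptyset$ (which is trivially a subset of $\pi^{-1}$). The key point is then that every singleton $\{(\pi(x), x)\}$ for $x \in X$ is an element of $\cA_\pi$ (it is a partial function from $X_0$ to $X$, and is contained in $\pi^{-1}$), and is minimal among nonzero elements simply because removing its one pair gives $\emptyset$. Conversely, any $f \in \cA_\pi \setminus \{\emptyset\}$ contains at least one pair $(x_0, x)$, and then $\{(x_0,x)\} \subseteq f$, so $\{(x_0,x)\} \leq f$ witnesses the existence of an atom below $f$.

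Putting this together establishes atomicity. In short, the atoms of $\cA_\pi$ are in canonical bijection with the elements of $X$ via $x \mapsto \{(\pi(x), x)\}$, and every nonempty (equivalently, nonzero) element of $\cA_\pi$ is a (set-theoretic) union of such singletons — so it is not only atomic but also atomistic. There is no real obstacle here: the argument is a short bookkeeping check, and the only thing to be careful about is recalling that $\pi^{-1}$ itself is generally not a partial function (when $\pi$ is not injective), which is precisely why $\cA_\pi$ is defined as the collection of those \emph{partial-function} subsets of $\pi^{-1}$, rather than as the full downset of $\pi^{-1}$ in the powerset of $X_0 \times X$.
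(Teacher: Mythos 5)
Your proposal is correct and follows essentially the same route as the paper: representability is immediate because $\cA_\pi$ is an actual algebra of partial functions, and atomicity follows by identifying the atoms as the singletons $\{(\pi(x),x)\}$ and noting that every nonzero element contains one. The extra remarks (that the order is inclusion, that the algebra is in fact atomistic, and that $\pi^{-1}$ itself need not be a partial function) are accurate but not needed beyond what the paper records.
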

\begin{proof}
  The algebra $\cA_\pi$ is an actual algebra of partial functions,
  hence $\cA_\pi$ is, trivially, representable.  The minimal element
  of $\cA_\pi$ is of course the empty function. The minimal nonzero
  elements are then precisely the singletons $\{(\pi (x),x)\}$, for $x
  \in X$. Every nonzero element of $\cA_\pi$ includes one of these
  singletons, and hence $\cA_\pi$ is atomic.
\end{proof}

Let us define $G$ on morphisms. Given a morphism $\varphi$ from $(\pi \from
X \twoheadrightarrow X_0)$ to $(\rho \from Y \twoheadrightarrow Y_0)$ in
$\setq$ (that is, a morphism from $\rho$ to $\pi$ in
$\setq^{\operatorname{op}}$), we let $G\varphi \from \cA_{\rho} \to
\cA_\pi$ assign to each partial function $g \in \cA_\rho$, the partial
function $G\varphi (g) \from X_0\parrow X$
given by
\begin{equation*}
  G\varphi(g) = \{(\pi (x), x) \in X_0 \times X \mid \exists y \in Y
  \from (x,y) \in \varphi \textrel{and} (\rho (y), y) \in g
  \}.
\end{equation*}
Using the fact that $\varphi$ is injective on each fibre of $\pi$, it
is easy to check that $G\varphi (g)$ is indeed a partial function and
thus an element of $\cA_\pi$. We remark that since $\varphi$ is a
partial function, when $(\pi (x), x)$ belongs to $G\varphi (g)$ there
is in fact \emph{a unique} $y \in Y$, namely $y = \varphi(x)$, so that
$(x,y) \in \varphi$ and $(\rho (y), y) \in g$. Thus, $G\varphi$ may be
alternatively described as
\begin{equation}
 G\varphi (g) = \{(\pi (x), x) \mid x \in \dom (\varphi) \textrel{and}
  (\rho  (\varphi( x)), \varphi (x)) \in g\}.\label{eq:17}
\end{equation}
\begin{lemma}\label{l:1}
  For every morphism $\varphi$ as above, the function $G\varphi$
  defines a complete homomorphism of $\{-, \rest\}$-algebras.
\end{lemma}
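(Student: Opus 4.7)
The plan is to work directly from the explicit description \eqref{eq:17} of $G\varphi$, verify the two equational axioms of a $\{-, \rest\}$-homomorphism, and then establish join completeness; the \textbf{complete homomorphism} conclusion then follows from the remark (preceding \Cref{def:meet}) that, for homomorphisms of representable $\{-,\rest\}$-algebras, join completeness and meet completeness coincide.

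To show $G\varphi$ preserves relative complement, observe that by \eqref{eq:17} membership of $(\pi(x),x)$ in $G\varphi(g)$ is, under the standing conditions $x \in \dom(\varphi)$ and the pair $(\pi(x),x)$ being fixed, controlled entirely by whether the single pair $(\rho(\varphi(x)),\varphi(x))$ lies in $g$. Since the second coordinate $x$ of $(\pi(x),x)$ determines $\varphi(x)$, no fibre-collapsing occurs and one reads off $G\varphi(g_1-g_2)=G\varphi(g_1)-G\varphi(g_2)$ by a straightforward unpacking. For preservation of $\rest$, the nontrivial content is that $\pi(x)\in\dom(G\varphi(g_1))$ is equivalent to $\rho(\varphi(x))\in\dom(g_1)$, whenever $x\in\dom(\varphi)$. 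The ``if'' direction is trivial (witness $x$). For the ``only if'' direction, a witness is some $x'\in\dom(\varphi)$ with $\pi(x')=\pi(x)$ and $\rho(\varphi(x'))\in\dom(g_1)$; then \ref{item:Q1} forces $\rho(\varphi(x'))=\rho(\varphi(x))$, giving $\rho(\varphi(x))\in\dom(g_1)$. With that equivalence in hand, another unpacking of \eqref{eq:17} yields $G\varphi(g_1\rest g_2)=G\varphi(g_1)\rest G\varphi(g_2)$.

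For join completeness, the key elementary observation is that in both $\cA_\rho$ and $\cA_\pi$, a subset $S$ has a join if and only if its set-theoretic union $\bigcup S$ is a partial function, in which case $\join S = \bigcup S$ (if $\bigcup S$ fails to be a partial function, two elements of $S$ disagree on some input, so no common upper bound exists). From formula \eqref{eq:17} one reads off that $G\varphi$ commutes with arbitrary set-theoretic unions of subsets of $\rho^{-1}$, so if $\join S$ exists in $\cA_\rho$ then $G\varphi(\join S)=G\varphi(\bigcup S)=\bigcup G\varphi[S]$ is itself a partial function (being an element of $\cA_\pi$), hence equals $\join G\varphi[S]$. This establishes that $G\varphi$ is join complete, and thus, being a $\{-,\rest\}$-homomorphism between representable algebras, a complete homomorphism.

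The only place where a genuine argument is needed is the preservation of $\rest$, where I expect \ref{item:Q1} to be essential; the remaining pieces are bookkeeping based on \eqref{eq:17} together with the observation that joins in $\cA_\pi$ and $\cA_\rho$ are just set-theoretic unions when they exist. The fibrewise-surjectivity axiom \ref{item:Q3} plays no role here, which is consistent with its role being in the verification of the counit/unit of the adjunction rather than in $G$ being well-defined on morphisms.
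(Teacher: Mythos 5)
There is a genuine gap in your treatment of $\rest$, and it is located exactly where you declare the argument trivial. You claim that the implication $\rho(\varphi(x))\in\dom(g_1) \implies \pi(x)\in\dom(G\varphi(g_1))$ holds ``with witness $x$''. But $\rho(\varphi(x))\in\dom(g_1)$ only tells you that $g_1$ contains \emph{some} pair $(\rho(\varphi(x)),y)$ with $y\in\rho^{-1}(\rho(\varphi(x)))$; for $x$ itself to witness $(\pi(x),x)\in G\varphi(g_1)$ you would need $y=\varphi(x)$, i.e.\ $(\rho(\varphi(x)),\varphi(x))\in g_1$, which need not hold. The paper's proof closes this gap using fibrewise surjectivity \ref{item:Q3}: since $y$ lies in the fibre $\rho^{-1}(\rho(\varphi(x)))$, there is some $x'$ with $\pi(x')=\pi(x)$ and $\varphi(x')=y$, and \emph{that} $x'$ witnesses $\pi(x)\in\dom(G\varphi(g_1))$. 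Your closing remark that \ref{item:Q3} ``plays no role here'' is therefore exactly backwards. A concrete failure: take $X=\{x\}$, $Y=\{y_1,y_2\}$ with $\rho$ collapsing $Y$ to a point, $\varphi(x)=y_1$ (fibrewise injective but not surjective), and $g_1=\{(\rho(y_2),y_2)\}$; then $\rho(\varphi(x))\in\dom(g_1)$ but $G\varphi(g_1)=\emptyset$, and preservation of $\rest$ fails.

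The rest of your argument is sound and close to the paper's. Preservation of $-$ is indeed immediate from \eqref{eq:17}, the backward direction of the key equivalence via \ref{item:Q1} matches the paper, and your reduction of $\rest$-preservation to that equivalence is essentially the paper's (the paper is slightly more careful, comparing the two sides as injective partial functions via their images). For completeness you verify join completeness using joins-as-unions, whereas the paper verifies meet completeness using meets-as-intersections; given the stated equivalence of the two notions for homomorphisms of representable $\{-,\rest\}$-algebras, either route is acceptable, and yours is no harder.
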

\begin{proof}
  The fact that $G\varphi$ preserves $-$ is a trivial consequence
  of~\eqref{eq:17}. To see that $G\varphi$ preserves $\rest$, it
  suffices to show that, for every $x \in \dom(\varphi)$ and $g \in
  \cA_\rho$, we have
  \begin{equation}\label{eq:equivalence}\rho (\varphi (x)) \in \dom (g) \iff \pi (x) \in
    \dom(G\varphi (g)).\end{equation}
  Indeed, given any $f, g \in \algebra A_\rho$, the two
  partial-function images $G\varphi(g \rest f)[X_0]$ and $(G\varphi(g)
  \rest G\varphi(f))[X_0]$ are subsets of $\varphi^{-1}(f[Y_0])$, and
  for all $x \in \varphi^{-1}(f[Y_0])$ (that is, for all $x \in \dom
  (\varphi)$ such that $(\rho(\varphi(x)), \varphi(x)) \in f$) we have
  $x \in G\varphi(g \rest f)[X_0] \iff \rho(\varphi(x)) \in \dom(g)$
  and $x \in (G\varphi(g) \rest G\varphi(f))[X_0] \iff \pi(x) \in
  \dom(G\varphi (g))$. Thus the images of the injective partial functions
  $G\varphi(g \rest f)$ and $G\varphi(g) \rest G\varphi(f)$ are equal
  provided~\eqref{eq:equivalence} holds.

  To prove \eqref{eq:equivalence}, first suppose that $\rho (\varphi (x)) \in \dom(g)$. Then there exists
  $y \in Y$ so that $(\rho (\varphi (x)), y) \in g \subseteq \rho^{-1}$,
  and since $\varphi$ is fibrewise surjective by~\ref{item:Q3}, there
  exists $x' \in \dom (\varphi)$ so that $\pi (x') = \pi (x)$ and $\varphi
  (x') = y$.  Then $(\pi (x'), x') \in G\varphi(g)$, so $\pi (x) = \pi (x')$
  belongs to the domain of $\dom(G\varphi (g))$. Conversely, if $\pi (x)
  \in \dom (G\varphi (g))$, then there exists some $x' \in \dom
  \varphi$ such that $\pi (x) = \pi (x')$ and $(\rho (\varphi (x')), \varphi
  (x')) \in g$. Using~\ref{item:Q1}, we have $\rho (\varphi (x')) = \rho
 ( \varphi (x))$, implying that $\rho( \varphi (x)) \in\dom (g)$ as
  required.

  Finally, since infima in $\cA_\pi$ and $\cA_\rho$ are given by
  intersections, by the definition of $G\varphi$ we easily conclude that
  $G\varphi$ is a meet-complete, hence complete, homomorphism.
\end{proof}

We have proved the following.

\begin{proposition}
  There is a functor
  \[G \from \setq^{\operatorname{op}} \to \aralg\]
  that sends a quotient $\pi \from X \twoheadrightarrow X_0$ to the $\{-,
  \rest\}$-algebra $\cA_\pi$ consisting of all partial functions
  included in $\pi^{-1} = \{(\pi (x), x) \mid x \in X\}$.
\end{proposition}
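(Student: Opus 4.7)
The plan is to observe that the preceding material already contains the essential content of the proposition, so what remains is only the verification of the two functoriality axioms (preservation of identities and preservation of composition). The object assignment $\pi \mapsto \cA_\pi$ has been shown to land in $\aralg$ by the lemma immediately above establishing that $\cA_\pi$ is an atomic representable $\{-, \rest\}$-algebra, and the morphism assignment $\varphi \mapsto G\varphi$ has been shown by \Cref{l:1} to produce complete homomorphisms of $\{-, \rest\}$-algebras. So $G$ is at least a well-defined assignment on both objects and morphisms of the categories in question.

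For identity preservation, the identity morphism on $\pi \from X \twoheadrightarrow X_0$ in $\setq$ is the total function $\operatorname{id}_X \from X \to X$; it clearly preserves equivalence and restricts to the identity on each fibre, so it is fibrewise bijective and hence is a morphism in $\setq$. Substituting into formula~\eqref{eq:17} gives
\[ G(\operatorname{id}_X)(g) = \{(\pi(x), x) \mid x \in X \textrel{and} (\pi(x), x) \in g\} = g\]
for every $g \in \cA_\pi$, whence $G(\operatorname{id}_X) = \operatorname{id}_{\cA_\pi}$.

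For composition, let $\varphi \from \pi \to \rho$ and $\psi \from \rho \to \sigma$ be morphisms in $\setq$. Since $G$ is covariant on $\setq^{\operatorname{op}}$ (contravariant on $\setq$), the required identity is $G(\psi \circ \varphi) = G\varphi \circ G\psi$ as homomorphisms $\cA_\sigma \to \cA_\pi$. Applying~\eqref{eq:17} to the right-hand side and using $\dom(\psi \circ \varphi) = \{x \in \dom(\varphi) \mid \varphi(x) \in \dom(\psi)\}$ on the left, both sides evaluated at $h \in \cA_\sigma$ reduce to
\[\{(\pi(x), x) \mid x \in \dom(\varphi),\ \varphi(x) \in \dom(\psi),\ \text{and}\ (\sigma(\psi(\varphi(x))), \psi(\varphi(x))) \in h\},\]
yielding the equality by direct substitution.

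There is no real obstacle here; the only subtlety is tracking variance carefully so that the domains and codomains of the induced homomorphisms align, and confirming that the partial-function identity on $X$ satisfies the conditions \ref{item:Q1}--\ref{item:Q3} of \Cref{category2} (each of which is immediate for a total fibrewise-bijective map between a quotient and itself).
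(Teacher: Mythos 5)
Your proposal is correct and takes essentially the same route as the paper: the paper states this proposition purely as a summary of the preceding development (the lemma that $\cA_\pi$ is atomic and representable, and \Cref{l:1}), with no further proof given. Your explicit verifications of identity and composition preservation via~\eqref{eq:17} are accurate and simply fill in the routine functoriality checks that the paper leaves implicit.
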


We can now prove that we have the claimed adjunction.
\begin{theorem}\label{t:adj}
  The functors $F \from \aralg \to \setq^{\operatorname{op}}$ and $G
  \from \setq^{\operatorname{op}} \to \aralg$ form an adjunction. That
  is
  \[F \dashv G.\]
\end{theorem}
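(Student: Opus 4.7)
The plan is to establish $F \dashv G$ by exhibiting a unit $\eta \from \operatorname{Id}_{\aralg} \to GF$ and verifying its universal property: for every complete homomorphism $h \from \cA \to G\pi$ (with $\pi \from X \twoheadrightarrow X_0$ in $\setq$) there is a unique $\setq$-morphism $\varphi \from \pi \to F\cA$ with $h = G\varphi \circ \eta_\cA$. For the unit I take
\[
\eta_\cA(a) = \{(\pi_\cA(x), x) \mid x \in \At(\cA),\ x \leq a\}.
\]
By \Cref{p:1}\ref{item:4}, distinct atoms $x, x' \leq a$ lie in distinct $\sim_\cA$-classes, so $\eta_\cA(a)$ is genuinely a partial function and belongs to $GF(\cA) = \cA_{\pi_\cA}$. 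Preservation of $-$ is immediate from atomicity and injectivity on atoms; preservation of $\rest$ uses the fact that, for an atom $x \leq b$, the condition $x \leq a \rest b$ is equivalent to the existence of an atom $x' \leq a$ with $x' \sim_\cA x$, which is exactly the domain-containment condition defining $\rest$ in $\cA_{\pi_\cA}$. Completeness follows from the standard fact (available since $\cA$ is completely representable) that, for any existing join $\join S$, the atoms below $\join S$ are precisely those below some $s \in S$.

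For the universal property, given $h \from \cA \to \cA_\pi$ I would define
\[
\dom(\varphi) = \{x \in X \mid (\pi(x), x) \in h(a) \text{ for some } a \in \cA\},
\qquad \varphi(x) = \varphi_a(\{(\pi(x), x)\}),
\]
where $a$ is any element with $\{(\pi(x), x)\} \leq h(a)$ and $\varphi_a \from \At(h(a)^\downarrow) \to \At(a^\downarrow)$ is the discrete dual of $h_a$ used in \Cref{sec:F}. Well-definedness of $\varphi(x)$ (independence of the choice of $a$) is exactly the Galois-connection argument given there for $Fh$; in fact, one may view $\varphi$ conceptually as the composition of $Fh$ with the canonical map $X \to \At(\cA_\pi)$, $x \mapsto \{(\pi(x), x)\}$. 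The three conditions \ref{item:Q1}--\ref{item:Q3} of \Cref{category2} for $\varphi$ then transfer directly from the corresponding properties of $Fh$ proved in \Cref{sec:F}, once one observes that $x \mapsto \{(\pi(x), x)\}$ puts the $\pi$-fibre of $\pi(x)$ into bijection with the $\sim_{\cA_\pi}$-fibre above $[\{(\pi(x), x)\}]$ in $\At(\cA_\pi)$.

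The identity $h = G\varphi \circ \eta_\cA$ unfolds, via~\eqref{eq:17}, to the equivalence
\[
(\pi(x), x) \in h(a) \iff x \in \dom(\varphi) \text{ and } \varphi(x) \leq a,
\]
both directions of which follow by suitable applications of the Galois connection~\eqref{eq:disc-dual} (with parameter $a$ and with parameter $\varphi(x)$, respectively). Uniqueness of $\varphi$ is then immediate: the above formula fixes $\dom(\varphi)$, and for $x \in \dom(\varphi)$ the atom $\varphi(x)$ is determined by the equivalence ``$\varphi(x) \leq a \iff (\pi(x), x) \in h(a)$'', because any atom $y$ is pinned down by its upset (take $a = y$). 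The main obstacle I anticipate is the verification that $\eta_\cA$ is a complete $\{-,\rest\}$-homomorphism---especially the interaction of $\rest$ with the $\sim_\cA$-quotient structure on atoms---together with the careful transfer of conditions~\ref{item:Q1}--\ref{item:Q3} from $Fh$ to $\varphi$. Naturality of the resulting hom-set bijection $h \leftrightarrow \varphi$ then follows by inspection of the constructions.
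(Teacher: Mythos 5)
Your proposal is correct, and it is assembled from the same ingredients as the paper's proof---the same unit $\eta_\cA(a) = \{([x],x) \mid x \in \At(\cA),\ x \leq a\}$ (justified, as in the paper, by the complete representation of \cite[Corollary~6.17]{diff-rest1}), the observation that $F(G\pi)$ is isomorphic to $\pi$ via $x \mapsto \{(\pi(x),x)\}$, and the Galois-connection machinery of \Cref{sec:F}---but you package the verification differently. The paper exhibits both the unit $\eta$ and the counit $\lambda$ and then appeals to the triangle identities~\eqref{eq:1} together with naturality, all left as ``a tedious but routine computation''; you instead verify that each $\eta_\cA$ is a universal arrow from $\cA$ to $G$, constructing the transpose $\varphi$ of $h \from \cA \to G\pi$ explicitly (your $\varphi$ is exactly $Fh \circ \lambda_\pi$, the transpose that the paper's unit--counit data would produce) and establishing its uniqueness by the pinning-down equivalence ``$\varphi(x) \leq a \iff (\pi(x),x) \in h(a)$'', whose two directions are indeed instances of~\eqref{eq:disc-dual} and~\eqref{eq:7}. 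What this buys is an explicit existence-and-uniqueness statement in place of the deferred triangle identities; what it does not automatically buy is that the left adjoint induced by these universal arrows acts on morphisms as the given functor $F$ does. That is precisely the naturality square $G(Fg) \circ \eta_\cA = \eta_\cB \circ g$, which you, like the paper, leave to ``inspection''; via~\eqref{eq:17} it reduces to the equivalence ``$y \in \dom(Fg)$ and $Fg(y) \leq a$'' $\iff$ ``$y \leq g(a)$'' for atoms $y$ of $\cB$, which again follows from~\eqref{eq:disc-dual} and~\eqref{eq:7}. Making that one square explicit would close your argument completely; as it stands, your proposal is at the same level of rigour as the paper's proof and reaches the same conclusion by an equivalent but differently decomposed route.
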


\begin{proof}
  We only need to define natural transformations $\eta \from {\rm
    Id}_{\aralg} \implies G \circ F$ and $\lambda \from {\rm Id}_{\setq} \implies
  F \circ G$ satisfying
  \begin{equation}
    F \eta \circ \lambda_F = {\rm Id}_F \qquad \text{and} \qquad G\lambda
    \circ \eta_G = {\rm Id}_G.\label{eq:1}
  \end{equation}

  Let us first define $\eta$.  Notice that, if $\cA$ is an atomic and
  representable algebra, the algebra $(G \circ F)(\cA)$ consists of all
  partial functions contained in $\{([x], x) \mid x \in
  \At(\cA)\}$. Thus we take for $\eta_\cA$ the representation map of
  \cite[\begin{NoHyper}Corollary~6.17\end{NoHyper}]{diff-rest1}, that is,
  \[\eta_\cA(a) \coloneqq  \{([x], x) \mid x \in \At(\cA)
    \textrel{and} x \leq a\}.\]

  In order to define $\lambda$, we observe that for a set quotient
  $\pi \from X \twoheadrightarrow X_0$, since the atoms of $G(\pi) = \cA_\pi$
  are precisely the singletons $a_x \coloneqq \{(\pi (x), x)\}$, for $x \in X$,
  we have $(a_{x_1} \sim_{\cA_\pi} a_{x_2} \iff \pi (x_1) = \pi (x_2))$,
  and thus $(F \circ G)(\pi)$ is a quotient isomorphic to $\pi$.  Therefore
  $\lambda_\pi \from X \parrow (F \circ G)(\pi)$ is simply the identity (total) function
  on $X$ (or, strictly speaking, the function that maps $x \in X$ to
  $a_x \in (F \circ G)(\pi)$).
  
  Finally, checking that $\eta$ and $\lambda$ as defined are indeed
  natural transformations satisfying~\eqref{eq:1} amounts to a tedious
  but routine computation.
\end{proof}

\section{Compatible completion and discrete duality}\label{sec:completion}
 
 In this section we first define the appropriate analogues of completeness and completion for algebras with compatibility information. Then we show that, similarly to the contravariant adjunction between atomic Boolean algebras and
sets, the monad induced by $F \from \aralg \dashv \setq^{\operatorname{op}} \,\from\! G$ gives precisely the `completion' of the
algebra. In fact, our contravariant
adjunction is a generalisation of the one between atomic Boolean
algebras and sets. Just like that adjunction, ours restricts to a duality of the full subcategory of (compatibly) complete algebras.

In order to define analogues of completeness and completion in the presence of compatibility information, we first need to formalise this idea of compatibility.

\begin{definition}
  Let $\cP$ be a poset. A binary relation $C$ on $\cP$ is a
  \defn{compatibility relation} if it is reflexive, symmetric, and
  downward closed in $\cP \times \cP$.  We say that two elements $a_1,
  a_2 \in \cP$ are \defn{compatible} if $a_1 C a_2$.
  \end{definition}

One can show that `reflexive,
symmetric, and downward closed' is an axiomatisation of
a rather general conception of compatibility in the following sense.

\begin{proposition} Let $(P, \leq, C)$ be a poset equipped with a
  binary relation $C$. Then $(P, \leq, C)$ is isomorphic to a poset
  $(P', \subseteq, C')$ of partial functions ordered by inclusion and
  equipped with the relation `agree on the intersection of their
  domains' if and only if $C$ is reflexive, symmetric, and downward
  closed.\footnote{The relation $C'$ was introduced
      in~\cite{Vagner1960} for semigroups of partial transformations,
      where it was called
      \emph{semi-compatibility}. 
  }
\end{proposition}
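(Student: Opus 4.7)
The necessity direction is immediate: the ``agree on the intersection of their domains'' relation is reflexive and symmetric by inspection, and downward closed because $\dom(f') \cap \dom(g') \subseteq \dom(f) \cap \dom(g)$ whenever $f' \subseteq f$ and $g' \subseteq g$.

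For sufficiency, the plan is to build an explicit representation. A key preliminary observation is that the hypotheses on $C$ force $p \leq q \implies p \mathrel{C} q$ (apply downward closure to $q \mathrel{C} q$). Take as base set $X \coloneqq P \sqcup N$, where $N \coloneqq \{(a, b) \in P \times P : \neg(a \mathrel{C} b)\}$, and as value set $Y \coloneqq P \sqcup \{0, 1\}$. To each $p \in P$ assign the partial function $\Phi(p) \from X \parrow Y$ that sends $r \in P$ to $r$ whenever $r \leq p$, sends $(a, b) \in N$ to $0$ when $a \leq p$ and to $1$ when $b \leq p$, and is undefined elsewhere. The design intent is that the ``$P$-part'' of $\Phi(p)$ encodes the principal downset $p^\downarrow$ faithfully (as its diagonal), while the ``$N$-part'' supplies external witnesses to incompatibility.

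Checking that $\Phi(p)$ is well-defined as a partial function, that $\Phi$ is injective, and that $p \leq q \iff \Phi(p) \subseteq \Phi(q)$ reduces quickly to the preliminary observation together with reflexivity of $\leq$: for instance, well-definedness fails only if some $(a, b) \in N$ has both $a \leq p$ and $b \leq p$, but then downward closure applied to $p \mathrel{C} p$ would yield $a \mathrel{C} b$, contradicting $(a, b) \in N$; and $\Phi(p) \subseteq \Phi(q) \Rightarrow p \leq q$ follows because $(p,p) \in \Phi(p)$.

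The step I expect to require most care---and which is the main obstacle---is the biconditional ``$p \mathrel{C} q$ if and only if $\Phi(p)$ and $\Phi(q)$ agree on $\dom(\Phi(p)) \cap \dom(\Phi(q))$''. For the forward direction, the only way to get conflicting labels at a point $(a, b) \in N$ would be $a \leq p$ and $b \leq q$ (or vice versa), in which case $p \mathrel{C} q$ would force $a \mathrel{C} b$ by downward closure, contradicting $(a, b) \in N$. The converse is the real reason for enlarging the base by $N$: if $\neg(p \mathrel{C} q)$, then $(p, q) \in N$ lies in both $\dom(\Phi(p))$ and $\dom(\Phi(q))$ by reflexivity of $\leq$, and $\Phi(p)(p, q) = 0 \neq 1 = \Phi(q)(p, q)$ furnishes the desired disagreement. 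Without these external witness points, incompatible elements with no common lower bound in $P$ could not be distinguished, which is the structural subtlety that motivates the whole construction.
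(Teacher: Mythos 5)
Your proof is correct and takes essentially the same approach as the paper's: both build an explicit representation whose partial functions have a ``diagonal'' part faithfully encoding the principal downsets (giving injectivity and order-reflection) plus one dedicated witness point per incompatible pair at which the two images take distinct values, with well-definedness and the forward direction of the compatibility biconditional both resting on the observation that reflexivity plus downward closure force $\leq$ to be contained in $C$. The only differences are cosmetic: you use ordered pairs $(a,b)$ as witness points with values $0$ and $1$, whereas the paper uses two-element sets $\{p',q\}$ with values $p'$ and $q$ drawn from $P$ itself.
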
 

\begin{proof}
  It is clearly necessary that $C$ be reflexive, symmetric, and
  downward closed. Showing these conditions are sufficient can be
  proved with the map sending each $p \in P$ to the partial function
\[\theta(p) \coloneqq \{(\{p'\}, p') \mid p' \leq p\} \cup \{(\{p', q\},p') \mid p' \leq p \text{ and } p' \mathbin{\slashed C} q\}.\]
These are indeed partial functions: a pair of type $(\{p', q\},p')$
cannot also be of type $(\{p'\}, p')$, for $p' = q$ contradicts $p'
\mathbin{\slashed C} q$; so we only need check that $\theta(p)$ never
contains both the pairs $(\{p', q\},p')$ and $(\{p', q\},q)$, for $p'
\neq q$. But containing both would imply $p' \leq p$ and $q \leq p$,
which by reflexivity and downward closure of~$C$ contradicts $p'
\mathbin{\slashed C} q$.

That $\theta$ is injective is ensured by pairs of the first type, for if $p_1 \neq p_2$ then without loss of generality $p_1 \not\leq p_2$, so that $(\{p_2\}, p_2)$ is in $\theta(p_2)$ but not $\theta(p_1)$. It is clear that $p_1 \leq p_2$ implies $\theta(p_1) \subseteq \theta(p_2)$. 

Lastly, we show that $p_1 \mathbin{\slashed C} p_2$ if and only if $\theta(p_1) \mathbin{\slashed C'} \theta(p_2)$. Suppose $p_1 \mathbin{\slashed C} p_2$. Then in particular $p_1 \neq p_2$. But by the supposition (and symmetry of the $C$ relation) $(\{p_1, p_2\}, p_1) \in \theta(p_1)$ and $(\{p_1, p_2\}, p_2) \in \theta(p_2)$. So $\theta(p_1)$ and $\theta(p_2)$ disagree at the point $\{p_1, p_2\}$. Conversely, suppose $\theta(p_1) \mathbin{\slashed C'} \theta(p_2)$. Then the disagreement can only be at points of cardinality two. That is, for some $p'_1, p'_2$ we have $(\{p'_1, p'_2\}, p'_1) \in \theta(p_1)$ and $(\{p'_1, p'_2\}, p'_2) \in \theta(p_2)$. But that implies $p_1' \mathbin{\slashed C} p_2'$, which by downward-closure of compatibility gives $p_1 \mathbin{\slashed C} p_2$.
\end{proof}

 \begin{definition}\label{def:comp} A poset $\cP$ equipped with a compatibility relation is said
  to be \defn{compatibly complete} provided it has joins of all
  subsets of pairwise-compatible elements. We say $\cP$ is \defn{meet
    complete} if it has meets of all \emph{nonempty} subsets.
\end{definition}

\begin{definition}\label{algebra_compatibility}When speaking about compatibility for representable
  $\{-, \rest\}$-algebras, we mean the relation that makes two
  elements compatible precisely when
  \[ a_1 \rest a_2 = a_2 \rest a_1.\]
\end{definition}

It is clear that for an actual $\{-, \rest\}$-algebra of partial functions, two elements are compatible
exactly when they agree on their shared domain. This is the easiest way to see that \Cref{algebra_compatibility} does indeed define a compatibility relation.

Note that in the case that all pairs of elements are compatible,
\emph{compatibly complete} is equivalent to \emph{complete}. Boolean
algebras provide examples of representable $\{-, \rest\}$-algebras
where every pair is compatible, if $-$ is the Boolean complement and
we use $\rest$ as the meet symbol. In the same way, generalised
Boolean algebras provide a more general class of
examples.\footnote{A \emph{generalised Boolean algebra}
    is a `Boolean algebra without a top', that is, a distributive
    lattice with a bottom and a relative complement operation, $-$,
    validating $a \wedge (b - a) = \bot$ and $a \vee (b - a) = a \vee
    b$.} Thus `compatibly complete' is a coherent generalisation of
  `complete' from situations where there is no compatibility
  information, to those where there is.

In a poset equipped with a compatibility relation, if a subset $S$ has
an upper bound $u$, then by reflexivity and downward closure of
compatibility, $S$ is pairwise compatible. Thus \emph{compatibly
  complete} $\implies$ \emph{bounded complete}. For a similar reason,
\emph{compatibly complete} $\implies$ \emph{directed complete}. So
`compatibly complete' subsumes these two preexisting order-theoretic
concepts.

From now on, we focus exclusively on our specific case of interest: representable  $\{-, \rest\}$-algebras.

\begin{lemma}\label{l:2}
  Let $\algebra A$ be a representable $\{-, \rest\}$-algebra. If
  $\algebra A$ is compatibly complete, then it is meet complete. The
  converse is false.
\end{lemma}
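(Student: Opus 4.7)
The plan for the forward direction is to reduce meets to a principal Boolean downset. Given a nonempty $S \subseteq \cA$, I would fix any $s_0 \in S$; as noted in the paper, $s_0^\downarrow$ is then a Boolean algebra. Since every pair of its elements shares the upper bound $s_0$, reflexivity and downward closure of the compatibility relation make them compatible, and compatible completeness of $\cA$ therefore yields a join in $\cA$ of every subset $T \subseteq s_0^\downarrow$. Each such join is bounded above by $s_0$ and so remains in $s_0^\downarrow$, showing that $s_0^\downarrow$ is a complete Boolean algebra; in particular it has all meets.

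To finish the forward direction, I would replace $S$ by $S' := \{s_0 \bmeet s \mid s \in S\} \subseteq s_0^\downarrow$. A routine check shows $S$ and $S'$ have the same lower bounds in $\cA$: any lower bound of $S$ is bounded by $s_0$ (taking $s = s_0$) and hence lies in $s_0^\downarrow$ and below each $s_0 \bmeet s$; conversely any lower bound $b$ of $S'$ in $\cA$ satisfies $b \leq s_0 \bmeet s \leq s$ for each $s \in S$. The meet $\meet S'$, which exists in $s_0^\downarrow$ by the previous paragraph, is therefore the meet $\meet S$ in $\cA$.

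For the counterexample, I would take $X = \{0,1\}$ and let $\cA_0$ be the three-element algebra $\{\emptyset, \{(0,0)\}, \{(1,1)\}\}$ of partial functions on $X$. Closure under $-$ and $\rest$ is verified by direct computation, so $\cA_0$ is trivially representable. Meet completeness follows by enumeration: the only non-trivial case is the meet of the two atoms, which is $\emptyset$. Yet the two atoms are compatible, since $\{(0,0)\} \rest \{(1,1)\} = \emptyset = \{(1,1)\} \rest \{(0,0)\}$, while they share no upper bound in $\cA_0$; thus their join does not exist and $\cA_0$ fails to be compatibly complete. I do not foresee any genuine obstacle: the forward direction is the familiar trick of localising to a principal Boolean downset, with the one point to state carefully being why the join in $\cA$ of an arbitrary subset of $s_0^\downarrow$ stays inside $s_0^\downarrow$, which is precisely where the specific upper bound $s_0$ is exploited.
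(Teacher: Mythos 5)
Your proposal is correct, but the forward direction takes a genuinely different route from the paper's. The paper applies compatible completeness directly to the set $S^l$ of \emph{lower bounds} of $S$: since any $s \in S$ is an upper bound for $S^l$, the set $S^l$ is pairwise compatible, so $\join S^l$ exists and is easily seen to be $\meet S$. You instead first establish that the principal downset $s_0^\downarrow$ is a complete Boolean algebra (every subset of it is bounded by $s_0$, hence pairwise compatible, hence has a join that stays below $s_0$), and then reduce $\meet S$ to $\meet\{s_0 \bmeet s \mid s \in S\}$ computed inside $s_0^\downarrow$. Your reduction step is in fact exactly the argument the paper uses later for the converse direction of its Lemma 4.7 (meet complete iff every $a^\downarrow$ is a complete Boolean algebra), so your proof effectively bundles Lemmas 4.6 and 4.7 together; what it costs is the extra (correct but not strictly necessary) observation that the join of a subset of $s_0^\downarrow$ remains in $s_0^\downarrow$, while what it buys is the stronger intermediate fact that each $s_0^\downarrow$ is complete. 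Your counterexample is the same as the paper's three-element algebra of two disjoint singleton partial functions, up to relabelling the base.
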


\begin{proof}
  Suppose $\algebra A$ is compatibly complete, and take a nonempty
  subset $S$ of $\algebra A$. Let $S^l$ denote the set of lower bounds
  for $S$. Since $S$ is nonempty, we have an $s \in S$. Then $s$ is an
  upper bound for $S^l$, and so $S^l$ is pairwise compatible. Then by
  the supposition, $S^l$ has a join. It is then straightforward to
  show that this join is the meet of $S$.

  The three-element $\{-, \rest\}$-algebra consisting of the partial functions
  $\emptyset$, $\{(1,1)\}$, and $\{(2,2)\}$ provides a counterexample
  for the converse statement.
\end{proof}

\begin{lemma}\label{l:3}
  Let $\cA$ be a representable $\{-, \rest\}$-algebra. Then $\cA$ is
  meet complete if and only if for every $a \in \cA$ the Boolean algebra
  $a^\downarrow$ is complete.
\end{lemma}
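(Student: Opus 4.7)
The statement is a biconditional about meet completeness of $\cA$ versus completeness of the Boolean algebras $a^\downarrow$. My plan is to prove each direction by exploiting the fact, recalled earlier in the paper, that in a representable $\{-,\rest\}$-algebra each downset $a^\downarrow$ is a Boolean algebra with top $a$. In particular its completeness as a Boolean algebra amounts to the existence of all nonempty meets, since the empty meet there is just $a$.

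For the forward direction, I would suppose that $\cA$ is meet complete and fix $a \in \cA$. Given a nonempty $S \subseteq a^\downarrow$, by hypothesis $\meet S$ exists in $\cA$, and picking any $s \in S$ we have $\meet S \leq s \leq a$, so $\meet S$ lies in $a^\downarrow$. Since any lower bound of $S$ inside $a^\downarrow$ is in particular a lower bound in $\cA$, this meet is still the infimum when computed inside $a^\downarrow$, and so $a^\downarrow$ is complete.

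For the backward direction, I would suppose that each $a^\downarrow$ is complete and let $S \subseteq \cA$ be nonempty. Pick any $s_0 \in S$ and consider the compressed family $T \coloneqq \{s \bmeet s_0 \mid s \in S\} \subseteq s_0^\downarrow$. The key observation is that $S$ and $T$ have exactly the same lower bounds in $\cA$: any lower bound of $S$ is below both $s$ and $s_0$ for each $s$, hence below $s \bmeet s_0$, and conversely any lower bound of $T$ is below $s \bmeet s_0 \leq s$ for each $s$. So it is enough to show that $\meet T$ exists in $\cA$. Since $T$ is a nonempty subset of the complete Boolean algebra $s_0^\downarrow$, its meet in $s_0^\downarrow$ exists; and because any lower bound of the nonempty set $T$ in $\cA$ is automatically at or below $s_0$, that meet coincides with the meet in $\cA$.

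The argument is essentially the standard trick of bounding a subset by one of its elements to reduce to the Boolean case, so I do not anticipate a serious obstacle. The only place where care is required is the last sentence above: the nonemptiness of $T$ is what prevents lower bounds in $\cA$ from ``escaping'' $s_0^\downarrow$, and this in turn is what allows the Boolean-algebra meet to serve as a meet in $\cA$. This same nonemptiness hypothesis is of course also what makes ``meet complete'' a sensible definition in this setting, where $\cA$ need not possess a top.
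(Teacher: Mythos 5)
Your proof is correct and follows essentially the same route as the paper's: the forward direction uses that nonempty meets in $a^\downarrow$ agree with meets in $\cA$ (plus the empty meet being $a$), and the backward direction reduces a nonempty $S$ to the family $\{s \bmeet s_0 \mid s \in S\}$ inside $s_0^\downarrow$ for a chosen $s_0 \in S$. The extra care you take in checking that lower bounds cannot escape $s_0^\downarrow$ is exactly the point the paper leaves implicit.
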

\begin{proof}
  The forward direction holds simply because for every nonempty subset
  $S \subseteq a^\downarrow$ we have $\meet_{a^\downarrow} S
  =\meet_\cA S$, and for the empty subset we have
  $\meet_{a^\downarrow} \emptyset = a$. For the converse, given a
  nonempty subset $S \subseteq \cA$, fix $a \in S$. Then
  $\meet_\cA S = \meet_{a^\downarrow} \{a\cdot s \mid s \in S\}$.
\end{proof}

Next we define what a compatible completion of a representable $\{-,
\rest\}$-algebra is. We are guided by completions of Boolean
algebras. Just as for Boolean algebras,
completions will be unique up to isomorphism (\Cref{p:1_0}).

\begin{definition}\label{def:completion}
  A \defn{compatible completion} of a representable $\{-,
  \rest\}$-algebra $\cA$ is an embedding $\iota \from \algebra A
  \hookrightarrow \algebra C$ of $\{-, \rest\}$-algebras such that $\cC$ is representable and compatibly complete and
  $\iota[\cA]$ is join dense in~$\cC$.
\end{definition}

The next lemma tells us in particular that compatible completions are complete homomorphisms.

\begin{lemma}\label{lem:dense_complete}
Let $\iota \from \algebra A \hookrightarrow \algebra B$ be an embedding of representable $\{-, \rest\}$-algebras. If $\iota[\algebra A]$ is join dense in $\algebra B$ then $\iota$ is complete.
\end{lemma}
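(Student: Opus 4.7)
The plan is to invoke the equivalence (from \cite{diff-rest1}, Corollaries~6.5 and 6.6) that for a homomorphism of representable $\{-, \rest\}$-algebras, being complete is equivalent to being join complete, so it suffices to show that $\iota$ preserves existing arbitrary joins. Let $S \subseteq \algebra A$ with $\join S$ existing in~$\algebra A$; I will show that $\iota(\join S)$ is the join of $\iota[S]$ in~$\algebra B$. One direction is immediate: $\iota(\join S)$ is an upper bound of $\iota[S]$ by order preservation of the homomorphism~$\iota$.

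For the universal property, let $b \in \algebra B$ be any upper bound of $\iota[S]$. I want to show $\iota(\join S) \leq b$, equivalently, that $d \coloneqq \iota(\join S) - b$ equals~$0$. This is where join density enters: write $d = \bigcup T$ with $T \subseteq \iota[\algebra A]$, and aim for a contradiction by supposing $d \neq 0$. Then some $\iota(a) \in T$ must be nonzero. Two inequalities follow: from $\iota(a) \leq d \leq \iota(\join S)$ together with the fact that $\iota$ is order reflecting (being an injective homomorphism: $\iota(x) \leq \iota(y)$ forces $\iota(x \bmeet y) = \iota(x)$, whence by injectivity $x \bmeet y = x$, i.e.\ $x \leq y$), I conclude $a \leq \join S$; from $\iota(a) \leq \iota(\join S) - b$ I conclude $\iota(a) \bmeet b = 0$. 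Since every $s \in S$ satisfies $\iota(s) \leq b$, this gives $\iota(a \bmeet s) = \iota(a) \bmeet \iota(s) \leq \iota(a) \bmeet b = 0$, so $a \bmeet s = 0$ for each $s \in S$.

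The final step is carried out inside the Boolean algebra $(\join S)^{\downarrow}$ of~$\algebra A$, which contains $a$ and all of~$S$, and in which $\join S$ remains the join of~$S$ (any upper bound of $S$ inside $(\join S)^{\downarrow}$ is also an upper bound in~$\algebra A$, hence above $\join S$). By the standard fact that in a Boolean algebra meet distributes over any existing join---a two-line argument via Boolean complementation inside the downset---I obtain $a = a \bmeet \join S = \join_{s \in S}(a \bmeet s) = 0$, contradicting the choice of~$a$. Hence $d = 0$, as required.

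I expect no substantial obstacle: the only points that need care are (i) the order reflection of~$\iota$, which is a short injectivity argument, and (ii) the invocation of infinite meet distributivity in the Boolean algebra $(\join S)^{\downarrow}$, which holds in any Boolean algebra whenever the join in question exists, and which is what ultimately converts the pointwise equation $a \bmeet s = 0$ into $a = 0$.
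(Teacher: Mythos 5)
Your proof is correct. It reaches the same place as the paper's but by a more self-contained route: the paper simply restricts $\iota$ to a map of Boolean algebras $(\join S)^{\downarrow} \to \iota(\join S)^{\downarrow}$, observes that the image is join dense there, and cites the known fact that a dense embedding of Boolean algebras preserves existing joins. You instead verify the least-upper-bound property directly against an arbitrary upper bound $b \in \algebra B$, using join density only to extract a single nonzero $\iota(a)$ below $\iota(\join S) - b$, and then re-derive the Boolean fact by hand via infinite meet-distributivity in $(\join S)^{\downarrow}$. What your version buys is that everything is from first principles (order reflection of injective homomorphisms, $x \leq y \iff x - y = 0$, distributivity over existing joins), and it sidesteps the small verification the paper leaves implicit, namely that the restricted image really is join dense in $\iota(\join S)^{\downarrow}$ and that joins in that downset agree with joins in $\algebra B$. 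What it costs is length, plus the explicit appeal to the equivalence of join completeness and completeness from the earlier paper (which the published proof also uses, tacitly, since it only checks joins). All the individual steps you flag---order reflection, $\iota(a) \leq \iota(\join S)-b \implies \iota(a) \bmeet b = 0$, and the distributivity law---are sound in representable $\{-,\rest\}$-algebras, so there is no gap.
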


\begin{proof}
  Suppose the join of $S \subseteq \algebra A$ exists. Then the
  restriction $\iota \from (\join S)^\downarrow \to \iota(\join
  S)^\downarrow$ is an embedding of Boolean algebras with join dense
  image, so just apply the corresponding known result for Boolean
  algebras, yielding $\iota(\join S) = \join_{\iota(\join
    S)^\downarrow}\iota[S] = \join_{\algebra B}\iota[S]$.
\end{proof}

The following technical results will be used, in particular, for the
proof that compatible completions are unique up to unique
isomorphism. Given an $n$-ary operation $\Omega$ on $\cA$ and subsets $S_1,
\dots, S_n \subseteq \cA$, we shall use $\Omega(S_1, \dots, S_n)$ to
denote the set $\{\Omega(s_1, \dots, s_n) \mid s_1 \in S_1,\dots, s_n
\in S_n\}$, and if one of the sets is a singleton $\{a\}$, we may
simply write~$a$.

\begin{lemma}\label{l:distributive}
  Let $\cA$ be a compatibly complete and representable $\{-, \rest\}$-algebra and $S, T
  \subseteq \cA$ two subsets of pairwise-compatible elements. \begin{enumerate}[(a)]
  \item\label{item:aa}
  The set $S \rest T$ consists of pairwise-compatible elements, and
    \[\join S \rest \join T = \join (S \rest T).\]

  \item\label{item:bb} Suppose $S$ is nonempty. Then for every $a \in
    \cA$ we have
    \[\meet(a-S) = a-\join S.\]
  \end{enumerate}
\end{lemma}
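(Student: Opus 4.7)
My plan is to reduce both parts to calculations in complete Boolean algebras. The enabling observation is that for every $a \in \cA$ the downset $a^\downarrow$ is a complete Boolean algebra: any subset of $a^\downarrow$ is bounded by $a$, hence pairwise compatible (by reflexivity and downward closure of compatibility), hence admits a join in $\cA$ which lies in $a^\downarrow$.

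For part~(a), I would first observe that $S \rest T$ consists of pairwise-compatible elements---since $s \rest t \leq t$ and compatibility is downward closed---so that $\join(S \rest T)$ exists. Monotonicity of $\rest$ in both arguments yields $\join(S \rest T) \leq \join S \rest \join T$. For the reverse inequality, both elements lie in $(\join T)^\downarrow$, where $\sim_\cA$ coincides with equality by Proposition~\ref{p:1}\ref{item:4}, so it suffices to show they are $\sim_\cA$-equivalent. I would do this by working in the quotient $\cA/\sim_\cA$, first checking that the assignment $a \mapsto [a]$ preserves compatible joins: if $s \preceq_\cA u$ for every $s \in S$, then $u \rest \join S \geq u \rest s \geq s$ makes $u \rest \join S$ an upper bound of $S$, giving $\join S \preceq_\cA u$ and hence $[\join S] = \join[S]$. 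Using Proposition~\ref{p:1}\ref{item:3} together with two applications of infinite distributivity in complete Boolean downsets of $\cA/\sim_\cA$, one would compute
\[ [\join S \rest \join T] = [\join S] \wedge [\join T] = \join_{s,t}([s] \wedge [t]) = [\join(S \rest T)]. \]

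For part~(b), I would work in the complete Boolean algebra $a^\downarrow$. The identity $a - (a \bmeet b) = a - b$---straightforwardly verified in partial function representations and hence valid in any representable $\{-, \rest\}$-algebra---expresses $a - s$ as the Boolean complement of $a \bmeet s$ within $a^\downarrow$. De Morgan's law in $a^\downarrow$ then gives $\meet(a - S) = a - \join(a \bmeet S)$, where the join is computed in $a^\downarrow$ and agrees with the join in $\cA$. The remaining equality $\join(a \bmeet S) = a \bmeet \join S$ is infinite distributivity in the complete Boolean algebra $(\join S)^\downarrow$: taking $x = a \bmeet \join S$ and applying distributivity over $S$ (whose join in $(\join S)^\downarrow$ is its top $\join S$), together with $\join S \bmeet s = s$ for $s \leq \join S$, yields $a \bmeet \join S = \join_s(a \bmeet s)$. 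Applying the identity once more gives $\meet(a - S) = a - (a \bmeet \join S) = a - \join S$.

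The main technical obstacle is setting up the infinite distributivity in $\cA/\sim_\cA$ and verifying the preservation of compatible joins by $[\cdot]$; both are manageable once one observes that bounded subsets of $\cA/\sim_\cA$ inherit joins from $\cA$ (by replacing each $u$ with $u \rest v \sim_\cA u$ when $u \preceq_\cA v$), so that the relevant downsets of $\cA/\sim_\cA$ are themselves complete Boolean algebras.
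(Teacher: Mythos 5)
Your proposal is correct and follows essentially the same route as the paper's proof: both parts are reduced to infinite distributivity and De Morgan computations inside complete Boolean downsets (using \Cref{p:1} to pass between $c^\downarrow$ and $[c]^\downarrow$ for part~(a), and working in $a^\downarrow$ and $(\join S)^\downarrow$ for part~(b)). The only cosmetic differences are that you obtain pairwise compatibility of $S \rest T$ from downward closure of the compatibility relation rather than from the identity $a \rest b \rest c = b \rest a \rest c$, and that you make explicit the preservation of compatible joins by the quotient map $a \mapsto [a]$, which the paper uses implicitly.
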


\begin{proof}
  For \ref{item:aa}: the fact that $S \rest T$ is a set whose elements
  are pairwise compatible follows from a repeated use of the equality
  $a \rest b \rest c = b \rest a \rest c$, which is a consequence
  of~\ref{eq:8}. 
  Fix an arbitrary element $a \in \algebra A$ be arbitrary and, for
  the sake of readability, define $s_0 \coloneqq \join S$ and $t_0
  \coloneqq \join T$.  We prove the asserted equation in two steps:
  first we show that $a \rest t_0 = \join (a \rest T)$ and then that
  $s_0 \rest a = \join(S \rest a)$. From there we have
    \begin{align*}
      \join S \rest \join T
      & =  s_0 \rest t_0 =  \join (s_0 \rest T) =
        \join_{t \in T} (s_0 \rest t) \\&= \join_{t \in T} \join (S \rest t)
        = \join_{t \in T} \join_{s \in S}s \rest t = \join S \rest T
    \end{align*}
    as required. (The last equality is a true property of suprema for any doubly indexed set, when all the suprema exist.)
  
  For $a \rest t_0 = \join (a
  \rest T)$, 
  \Cref{p:1}\ref{item:4} states that for any $c \in \algebra A$, the
  map $b \mapsto [b]$ provides an order isomorphism $(c^\downarrow,
  \leq)$ to $([c]^\downarrow, \preceq)$. So given that $t_0$ is an
  upper bound for both sides of $a \rest t_0 = \join (a \rest T)$, and
  $a$ is an upper bound for both sides of $s_0 \rest a = \join(S \rest
  a)$, it is actually sufficient to show that $[a \rest t_0] = [\join
  (a \rest T)]$ and that $[s_0 \rest a] = [\join(S \rest a)]$.
  
  By Lemmas~\ref{l:2} and \ref{l:3}, the poset $t_0^\downarrow$ is a
  complete Boolean algebra, and so $[t_0]^\downarrow$ is as well. The
  meets/joins in $[t_0]^\downarrow$ are written in $\wedge$/$\vee$
  notation; in $[t_0]^\downarrow$ we have that $[b] \wedge [c] = [b
  \rest c]$ is valid (\Cref{p:1}\ref{item:3}). Being careful that all
  terms rest in $[t_0]^\downarrow$, we compute
  \[[a \rest t_0] = [a \rest t_0] \wedge [\join T] = [a \rest t_0]
    \wedge \bigvee_{t \in T}[t] = \bigvee_{t \in T}([a \rest t_0]
    \wedge [t]) = \bigvee_{t \in T}[a \rest t] = [\join (a \rest
    T)],\]
  with the central distributive equality being a valid property of any
  Boolean algebra (provided the join on the left side exists)
  \cite{koppelberg1989handbook}.
 
  Proving the equality $s_0 \rest a = \join(S \rest a)$ is very
  similar. In $[a]^\downarrow$ we have
  \[[\join(S \rest a)] = \bigvee_{s \in S}[s \rest a],\]
  and then in $[s_0]^\downarrow$ we compute
  \[\bigvee_{s \in S}[s \rest a] = \bigvee_{s \in S}([s] \wedge [a
    \rest s_0]) = (\bigvee_{s \in S}[s]) \wedge [a \rest s_0] =[s_0]
    \wedge [a \rest s_0] = [s_0 \rest a].\]
    
    For \ref{item:bb}:   we first observe that in the complete Boolean
  algebra $a^\downarrow$, we have
  \[\meet (a-S) =\meet (a-a\cdot S)=
    a-\join (a \cdot S),\]
  where $a\cdot S$ denotes the set $\{a \cdot s \mid s \in S\}$, and
  the first equality holds because $a - (a\bmeet b) = a-b$ is a valid
  equation in any representable algebra. In turn, in the complete
  Boolean algebra $s_0^\downarrow$, we may compute
  \[\join (a \bmeet S) = \join (a\bmeet s_0 \bmeet S) = a\bmeet s_0
    \bmeet \join S = a \bmeet \join S.\]
  Thus again by the validity of $a - (a\bmeet b) = a-b$ the desired
  equality follows.
\end{proof}

\begin{lemma}\label{lemma:key}
  Let $\iota \from \algebra A \hookrightarrow \algebra B$ and $\iota'
  \from \algebra A \hookrightarrow \algebra C$ be complete embeddings
  of representable $\{-, \rest \}$-algebras, and suppose that
  $\iota[\algebra A]$ is join dense in $\algebra B$, and that
  $\algebra C$ is compatibly complete. Then $\theta \from b \mapsto
  \join \{\iota'(a) \mid \iota(a) \leq b\}$ is a well-defined complete
  embedding of $\{-, \rest\}$-algebras from $\algebra B$ to $\algebra
  C$, and $\iota' = \theta \circ \iota$.
\end{lemma}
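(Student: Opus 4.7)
The plan is to verify in sequence: well-definedness of the join defining $\theta$, the identity $\theta \circ \iota = \iota'$, preservation of $\rest$ and $-$, completeness, and injectivity. For well-definedness, the set $T_b \coloneqq \{\iota'(a) \mid \iota(a) \leq b\}$ is pairwise-compatible in $\cC$: for $\iota(a_1), \iota(a_2) \leq b$, reflexivity and downward-closure of compatibility make them compatible in $\cB$; the embedding property of $\iota$ transfers this to $a_1, a_2$ in $\cA$, and the homomorphism $\iota'$ transfers it again to $\cC$. Compatible completeness of $\cC$ then yields the join. The identity $\theta \circ \iota = \iota'$ is immediate: $\iota$ being an order embedding makes the defining set equal $\iota'[a^\downarrow]$, whose maximum $\iota'(a)$ is its join. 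Monotonicity of $\theta$ is also direct.

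The substantial work lies in preservation of $\rest$. Set $S_b \coloneqq \{a \in \cA \mid \iota(a) \leq b\}$ and invoke \Cref{l:distributive}\ref{item:aa} in the compatibly complete algebra $\cC$ (whose hypotheses hold because $\iota'[S_{b_i}]$ is pairwise-compatible, as above) to get
\[\theta(b_1) \rest \theta(b_2) = \join\{\iota'(a_1 \rest a_2) \mid a_i \in S_{b_i}\}.\]
Since each $\iota(a_1 \rest a_2) \leq b_1 \rest b_2$, this yields $\theta(b_1) \rest \theta(b_2) \leq \theta(b_1 \rest b_2)$. For the reverse, fix $a \in S_{b_1 \rest b_2}$. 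A consequence of the axioms (visible in the partial-function semantics) is that $\iota(a) \leq b_1 \rest b_2$ forces $\iota(a) = b_1 \rest \iota(a)$. Since $\rest$ distributes over pairwise-compatible existing joins in any representable algebra, and $b_1 = \join \iota[S_{b_1}]$ by joint density, we obtain
\[\iota(a) = \join\{\iota(a' \rest a) \mid a' \in S_{b_1}\} \quad \text{in } \cB.\]
The critical step is to lift this equality to $\cA$: because $\iota$ is an order embedding, any $\cA$-upper bound of $\{a' \rest a \mid a' \in S_{b_1}\}$ maps to a $\cB$-upper bound of the image, hence dominates $\iota(a)$, so its preimage dominates $a$. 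Thus $a = \join\{a' \rest a \mid a' \in S_{b_1}\}$ already holds in $\cA$. Applying the complete embedding $\iota'$ and then reading \Cref{l:distributive}\ref{item:aa} right-to-left in $\cC$ gives $\iota'(a) = \theta(b_1) \rest \iota'(a) \leq \theta(b_1) \rest \theta(b_2)$, using $\iota'(a) \leq \theta(b_2)$ (from $b_1 \rest b_2 \leq b_2$). Joining over $a$ gives the reverse inequality.

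Preservation of $-$ is analogous, using \Cref{l:distributive}\ref{item:bb} to rewrite meets in $\cC$ and performing the same \emph{up-across-down} transfer via joint density and the embedding property of $\iota$. Completeness of $\theta$ (preservation of existing joins in $\cB$) unfolds $S_{\join S}$ via joint density and uses an analogous transfer for the distributivity of $\bmeet$ over pairwise-compatible joins. Injectivity reduces to showing $S_{b_1} = S_{b_2}$ whenever $\theta(b_1) = \theta(b_2)$: for $a \in S_{b_1}$, one has $\iota'(a) \leq \theta(b_2)$, and the same transfer argument (now through $\bmeet$-distributivity) yields $\iota(a) \leq b_2$; then $b_1 = \join \iota[S_{b_1}] \leq b_2$, and symmetrically $b_1 \geq b_2$.

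The principal obstacle throughout is the absence of compatible completeness in $\cB$: we never have a direct way to apply \Cref{l:distributive} inside $\cB$. The workaround is the order-embedding property of $\iota$, which reflects equalities of least upper bounds landing in $\iota[\cA]$ back down to $\cA$; joint density supplies the specific compatible joins we need in $\cB$, and the complete embedding $\iota'$ then carries everything into $\cC$, where \Cref{l:distributive} does the real algebraic work.
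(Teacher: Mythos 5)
Your overall strategy coincides with the paper's: define $\theta$ by the same formula, verify well-definedness from pairwise compatibility of $\{\iota'(a)\mid\iota(a)\leq b\}$, and push everything through \Cref{l:distributive} in $\cC$ together with join density of $\iota[\cA]$ in $\cB$. Two of your local arguments genuinely differ from the paper's and are fine: your element-wise proof that $\theta$ preserves $\rest$ (fixing $a$ with $\iota(a)\leq b_1\rest b_2$ and transferring the identity $a=\join\{a'\rest a\mid a'\in S_{b_1}\}$ down to $\cA$ and up into $\cC$) replaces the paper's four-step chain that routes through join-completeness of $\theta$; and your injectivity argument ($\theta(b_1)=\theta(b_2)\Rightarrow S_{b_1}=S_{b_2}$) replaces the paper's reduction to $\theta^{-1}(0)=\{0\}$ via $a-b=0=b-a\Rightarrow a=b$. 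Both of your variants work.

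There are, however, two gaps. First, you repeatedly invoke distributivity of $\rest$ (and of $\bmeet$) over \emph{existing} pairwise-compatible joins ``in any representable algebra'', applied inside $\cB$, which is not assumed compatibly complete; \Cref{l:distributive} as stated does not cover this. The claim is in fact true --- the proof of \Cref{l:distributive}\ref{item:aa} only uses compatible completeness to guarantee existence of the relevant joins, and the Boolean infinite distributive law in $[t_0]^\downarrow$ holds whenever the join on one side exists --- but this strengthening needs to be stated and justified rather than asserted (the paper's own proof quietly relies on the same strengthening, so this is a shared debt, but your write-up leans on it more explicitly and more often). Second, and more seriously, ``preservation of $-$ is analogous'' is not true as written. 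The inequality $\theta(b_1)-\theta(b_2)\geq\theta(b_1-b_2)$ does follow from \Cref{l:distributive}\ref{item:bb} in $\cC$ exactly as you say. But the converse inequality $\theta(b_1)-\theta(b_2)\leq\theta(b_1-b_2)$ does not succumb to your up-across-down transfer: the natural intermediate objects ($\iota(a_1)-b_2$, or $\meet_{a_2\in S_{b_2}}(a_1-a_2)$) need not lie in $\iota[\cA]$, so there is nothing to transfer down to $\cA$. The paper obtains this direction by first proving that $\theta$ is join complete and then decomposing $b_1$ as the join of the compatible pair $\{b_1\bmeet b_2,\ b_1-b_2\}$, whence $\theta(b_1)\leq\theta(b_1)\bmeet\theta(b_2)+\theta(b_1-b_2)$ and Boolean reasoning in $\theta(b_1)^\downarrow$ finishes. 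Since you defer completeness of $\theta$ until after the $-$ step, you would need either to reorder your argument (prove join completeness first, as the paper does) or to supply a different argument for this inequality; as it stands the step is missing.
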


\begin{proof}
  First we check $\theta$ is well-defined. The set $\{\iota(a) \mid
  \iota(a) \leq b\}$ is pairwise compatible, as it is bounded
  above. Hence, by injectivity of $\iota$, the set $\{a \mid \iota(a)
  \leq b\}$ is pairwise compatible, and so $\{\iota'(a) \mid \iota(a)
  \leq b\}$ is too. By compatible completeness of $\algebra C$, the
  value $\join \{\iota'(a) \mid \iota(a) \leq b\}$ exists.  It is
  clear that $\theta$ is order preserving and that $\theta \circ \iota
  = \iota'$.
   
  Next we show that $\theta$ is join complete, as a map between
  posets. For simplicity, we treat $\algebra A$ as a subset of
  $\algebra B$. Suppose $S \subseteq \algebra B$ and that $\join S \in
  \cB$ exists. Then since $\cC$ is compatibly complete and
  $\theta(\join S)$ is an upper bound for $\theta[S]$, the join
  $\join\theta[S]$ exists, and $\theta(\join S) \geq \join
  \theta[S]$. For the reverse inequality, let $a \in \algebra A$ be
  bounded above by $\join S$. Then $a = a \bmeet \join \bigcup_{s \in
    S}\{a_1 \in \algebra A\mid a_1 \leq s\} = \join \bigcup_{s \in
    S}\{a \bmeet a_1 \in \algebra A\mid a_1 \leq s\}$. So the latter
  join exists not only in $\cB$ but also in $\algebra A$, since $a \in
  \cA$. Hence $\iota'(a) = \join \bigcup_{s \in S}\{\iota'(a) \bmeet
  \iota'(a_1) \in \algebra A\mid a_1 \leq s\}$, as $\iota'$ is a
  complete homomorphism. Thus $\join\theta[S] \geq \iota'(a)$. As
  $\theta(\join S)$ is by definition the join of these $\iota'(a)$, we
  are done.
  
  Now we show that $\theta$ preserves $-$.  Since $\theta$ is order
  preserving, it is clear that $\theta(b_1 \bmeet b_2) \leq
  \theta(b_1) \bmeet \theta(b_2)$. As $b_1$ is the join of $\{b_1
  \bmeet b_2, b_1 - b_2\}$, and $\theta$ preserves joins, we deduce
  that $\theta(b_1) \leq \theta(b_1)\bmeet \theta(b_2) +
  \theta(b_1-b_2)$. Thus, in the Boolean algebra
  $\theta(b_1)^\downarrow$, we have $\theta(b_1)-\theta(b_2) =
  \theta(b_1)-\theta(b_1)\bmeet \theta(b_2) \leq \theta(b_1-b_2)$.
  For the reverse inequality, suppose $a \in \algebra A$ with $a \leq
  b_1 -b_2$. Then using \Cref{l:distributive}\ref{item:bb} we have
  \begin{align*}\theta (b_1) - \theta (b_2) = \theta (b_1) -
    \join_{\substack{a_2 \in \algebra A\\ a_2 \leq b_2}} \iota' (a_2)
    &=\meet_{\substack{a_2 \in \algebra A\\ a_2 \leq b_2}} (\theta
    (b_1) - \iota' (a_2))\\
    &\geq \meet_{\substack{a_2 \in \algebra A\\ a_2 \leq b_2}} (\iota' (a) - \iota' (a_2)) =
    \meet_{\substack{a_2 \in \algebra A\\ a_2 \leq b_2}} \iota' (a- a_2).
 \end{align*}
 But when $a \leq b_1 - b_2$ and $a_2 \leq b_2$, we know $a - a_2 = a$
 (by an elementary property of sets/partial functions). Hence $\theta
 (b_1) - \theta (b_2) \geq \iota'(a)$. As $\theta(b_1 - b_2)$ is by
 definition the join of these $\iota'(a)$, we are finished.
  
 Let us show that $\theta$ preserves $\rest$. We now treat $\algebra
 A$ as a subset of both $\algebra B$ and $\algebra C$, conflating
 $\algebra A$ with its images under $\iota$ and $\iota'$. For $b_1,
 b_2 \in \algebra B$ define $A_1 = \{a_1 \in \algebra A \mid a_1 \leq
 b_1\}$ and $A_2 = \{a_2 \in \algebra A \mid a_2 \leq b_2\}$. We
 calculate
 \begin{align*}
   \theta(b_1) \rest \theta(b_2)
   &= \join_{\algebra C} A_1 \rest \join_{\algebra C} A_2 &&\text{by the definition of $\theta$} \\
   &= \join_{\algebra C}(A_1 \rest A_2) &&\text{by \Cref{l:distributive}\ref{item:aa}}\\
   &=\theta(\join_{\algebra B}(A_1 \rest A_2))&&\text{as $\theta$ is join complete}\\
   &=\theta(\join_{\algebra B} A_1 \rest \join_{\algebra B} A_2)&&\text{by
                                                                   \Cref{l:distributive}\ref{item:aa}}\\ 
   &= \theta(b_1 \rest b_2)&&\text{by density of $\algebra A$ in $\algebra B$}.
 \end{align*} 
 
 It remains to show that $\theta$ is injective. We claim that since
 $\theta$ is a homomorphism, injectivity of $\theta$ amounts to having
 $\theta^{-1}(0) = \{0\}$. Suppose $\theta^{-1}(0) = \{0\}$. Then
 given $a, b \in \algebra A$ we have $\theta(a) = \theta(b) \implies
 \theta(a-b) = 0 = \theta(b-a)$, and hence $a-b = 0 = b-a$. By an
 elementary property of sets/partial functions, this implies $a = b$,
 proving the claim.
 Now to show $\theta^{-1}(0) = \{0\}$, let $b \in \cB$ satisfy $\theta(b) =
 0$. By the definition of $\theta$, this means that $\iota'(a) = 0$
 whenever $\iota(a) \leq b$. But since $\iota'$ is injective, if
 $\iota'(a) = 0$ then $a = 0$ and so, $\iota(a) = 0$. Since
 $\iota[\cA]$ is join dense in~$\cB$, we may conclude that $b = 0$, as
 required.
\end{proof}

Now we can show that compatible completions are unique up to unique isomorphism.

\begin{proposition}\label{p:1_0}
  If $\iota \from \algebra A \hookrightarrow \algebra C$ and $\iota'
  \from \algebra A \hookrightarrow \algebra C'$ are compatible
  completions of the representable $\{-, \rest\}$-algebra $\cA$ then
  there is a unique isomorphism $\theta \from \algebra C \to \algebra
  C'$ satisfying the condition $\theta \circ \iota = \iota'$.
\end{proposition}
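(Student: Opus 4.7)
The plan is to apply \Cref{lemma:key} twice, in opposite directions, to obtain a pair of mutually inverse complete $\{-, \rest\}$-algebra homomorphisms between $\cC$ and $\cC'$. By \Cref{lem:dense_complete}, both $\iota$ and $\iota'$ are complete embeddings, and by the definition of compatible completion both $\cC$ and $\cC'$ are representable and compatibly complete; hence the hypotheses of \Cref{lemma:key} are met in both directions. This produces a complete embedding $\theta \from \cC \to \cC'$ with $\theta \circ \iota = \iota'$ together with a complete embedding $\theta' \from \cC' \to \cC$ with $\theta' \circ \iota' = \iota$.

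Next I would show that $\theta$ and $\theta'$ are mutual inverses. The composite $\theta' \circ \theta \from \cC \to \cC$ is complete, as a composition of complete homomorphisms, and satisfies $(\theta' \circ \theta)(\iota(a)) = \theta'(\iota'(a)) = \iota(a)$ for all $a \in \cA$, so it agrees with the identity map on $\iota[\cA]$. Since $\iota[\cA]$ is join dense in $\cC$, every element of $\cC$ is a join of elements of $\iota[\cA]$, and both $\theta'\circ\theta$ and $\mathrm{id}_{\cC}$ preserve such joins; hence the two maps coincide on all of $\cC$. The symmetric argument gives $\theta \circ \theta' = \mathrm{id}_{\cC'}$, and therefore $\theta$ is an isomorphism.

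For uniqueness, I would observe that any isomorphism $\psi \from \cC \to \cC'$ satisfying $\psi \circ \iota = \iota'$ preserves arbitrary existing joins, and is therefore completely determined by its values on the join-dense subset $\iota[\cA]$, where $\psi(\iota(a)) = \iota'(a)$ is forced. I do not anticipate a substantive obstacle: \Cref{lemma:key} has already done the hard work of constructing the map and verifying that it is a homomorphism of $\{-, \rest\}$-algebras. The most delicate step is simply recognising that a complete homomorphism on $\cC$ is pinned down by its restriction to any join-dense subset, which is what justifies both the identification of $\theta' \circ \theta$ with $\mathrm{id}_{\cC}$ and the uniqueness clause.
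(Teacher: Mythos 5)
Your proposal is correct and follows essentially the same route as the paper: apply \Cref{lemma:key} in both directions, use join density of $\iota[\cA]$ to conclude that the two complete composites are identities, and derive uniqueness from the fact that a join-preserving map is determined on a join-dense subset. The only cosmetic difference is that the paper justifies completeness of an arbitrary isomorphism $\psi$ via \Cref{lem:dense_complete}, whereas you appeal directly to isomorphisms preserving all existing joins; both are fine.
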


\begin{proof}
  For uniqueness, suppose we have an isomorphism $\theta \from
  \algebra C \to \algebra C'$ satisfying $\theta \circ \iota =
  \iota'$. As $\iota'[\algebra A]$ is both join dense in $\algebra C'$
  and a subset of $\theta[\algebra C]$, by applying
  \Cref{lem:dense_complete} to $\theta$ we see that $\theta$ is
  complete.  Then as each $c \in \algebra C$ is equal to $\join
  \{\iota(a)\mid a \in \algebra A\text{ and } \iota(a) \leq c\}$, our $\theta$ must be given by
  $\theta \from c \mapsto \join \{\iota'(a) \mid a \in \algebra A\text{ and } \iota(a) \leq c\}$.

For existence, we argue that $\theta \from c \mapsto \join \{\iota'(a) \mid a \in \algebra A\text{ and } \iota(a) \leq c\}$ indeed works. As compatible completions are complete homomorphisms, we can apply \Cref{lemma:key} to $\iota$ and $\iota'$. Hence $\theta$ is a well-defined complete embedding of $\{-, \rest\}$-algebras. By symmetry, there is a complete embedding $\theta' \from \algebra C' \hookrightarrow \algebra C$ with $\theta' \circ \iota' = \iota$. Then $\theta' \circ \theta$ is complete and fixes $\iota[\algebra A]$. So by join density of $\iota[\algebra A]$, the homomorphism $\theta' \circ \theta$ is the identity on $\algebra C$. Similarly, $\theta \circ \theta'$ is the identity on $\algebra C'$. Thus $\theta$ is an isomorphism.
\end{proof}

Hence if an algebra has a compatible completion then that compatible completion is `unique', for which reason we may refer to a compatible completion as \emph{the} compatible completion. We may also, as is common, refer to $\algebra C$ itself as the compatible completion of $\algebra A$, when $\iota \from \algebra A \hookrightarrow \algebra C$ is a compatible completion.

Next, we show how to explicitly construct the completion of any
\emph{atomic} representable algebra, by showing that the monad on
atomic representable algebras induced by the adjunction of
\Cref{t:adj} gives precisely the compatible completion of the
algebra. As a corollary, we obtain a duality for compatibly complete
atomic representable algebras. Recall from Section~\ref{sec:F} that,
for every atomic representable algebra~$\cA$, we use $\pi_{\algebra
  A}$ to denote the canonical projection $\At(\cA) \twoheadrightarrow
\At(\cA)/{\sim_\cA}$ (see also Definition~\ref{sec:equiv}).

\begin{theorem}\label{p:completion}
  For every atomic representable $\{-, \rest\}$-algebra $\cA$, the homomorphism
  \begin{align*}
    \eta_\cA \from \cA
    & \to (G \circ F) (\cA) = \{f \from \At(\cA)/{\sim_\cA}\parrow \At(\cA) \mid f
      \subseteq \pi_{\algebra A}^{-1}\}
    \\ a
    & \mapsto  \{([x], x) \mid x \in \At(\cA)
      \textrel{and} x \leq a\}
  \end{align*}
  is the compatible completion of $\cA$.
\end{theorem}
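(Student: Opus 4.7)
The plan is to verify the three requirements of \Cref{def:completion}: that $\eta_\cA$ is an embedding of representable $\{-, \rest\}$-algebras, that $(G\circ F)(\cA)$ is compatibly complete, and that $\eta_\cA[\cA]$ is join dense in $(G\circ F)(\cA)$.

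First, note that $\eta_\cA$ is an embedding by the complete representation theorem for atomic representable algebras (\cite[\begin{NoHyper}Corollary~6.17\end{NoHyper}]{diff-rest1}), which is precisely how $\eta_\cA$ was defined in the proof of \Cref{t:adj}. Since $(G\circ F)(\cA)$ is an algebra of partial functions, it is automatically representable.

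Next I would establish compatible completeness of $(G\circ F)(\cA)$. Recall that two partial functions in $(G\circ F)(\cA)$ are compatible (in the sense of \Cref{algebra_compatibility}) exactly when they agree on the intersection of their domains, as noted just after that definition. Thus given any pairwise-compatible family $S \subseteq (G\circ F)(\cA)$, the set-theoretic union $\bigcup S$ is a well-defined partial function from $\At(\cA)/{\sim_\cA}$ to $\At(\cA)$, and since each element of $S$ is contained in $\pi_\cA^{-1}$, so is $\bigcup S$. Hence $\bigcup S \in (G\circ F)(\cA)$, and this is clearly the join of $S$ (joins in algebras of partial functions, when they exist, are given by unions).

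For join density, I would first observe that for each atom $x \in \At(\cA)$, the image $\eta_\cA(x) = \{([y], y) \mid y \in \At(\cA), \, y \leq x\}$ reduces to the singleton $\{([x], x)\}$, since $x$ is itself the unique atom below $x$. These singletons are precisely the atoms of $(G\circ F)(\cA)$. Now, any $f \in (G\circ F)(\cA)$ is a subset of $\pi_\cA^{-1}$ and therefore equals the union of the singletons $\{([x], x)\}$ for $([x], x) \in f$; by the argument in the previous paragraph this union is the join of these singletons in $(G\circ F)(\cA)$. Thus every element of $(G\circ F)(\cA)$ is a join of elements of $\eta_\cA[\cA]$, proving join density.

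No step is genuinely difficult: compatible completeness reduces to the fact that pairwise-compatible partial functions can be glued, and join density reduces to the observation that the atoms of the target algebra lie in $\eta_\cA[\At(\cA)]$ and that the target algebra (being a collection of partial functions closed under unions of compatible families) is atomistic. The only mild care required is to check that the set-theoretic union of a pairwise-compatible family is the join in the algebraic sense, but this is immediate from the characterisation of algebraic compatibility as agreement on shared domains.
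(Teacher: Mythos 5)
Your proposal is correct and follows essentially the same route as the paper's own proof: compatible completeness via gluing a pairwise-compatible family into a union that remains inside $\pi_\cA^{-1}$, and join density via the atoms $\{([x],x)\}$ lying in $\eta_\cA[\At(\cA)]$. The only cosmetic difference is that you obtain injectivity by citing the complete representation theorem, whereas the paper argues it directly from atomisticity of $\cA$; both are fine.
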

 
\begin{proof}
  For injectivity, suppose $\eta_\cA(a) = \eta_\cA(b)$. Then as
  $\algebra A$ is atomic, it is atomistic. (Recall \Cref{def:atomistic} and the following remark.) So we have
  \[a = \join \{x \in \At(\cA) \mid x \leq a\} = \join \{x \in
    \At(\cA) \mid x \leq b\} = b.\]
  For compatible completeness of $(G \circ F)(\algebra A)$, let $S$ be
  a pairwise-compatible subset of $(G \circ F)(\algebra A)$. Then as
  $(G \circ F)(\algebra A)$ is an algebra of partial functions, all
  pairs of elements of $S$ agree on their shared domains. That is,
  $\bigcup S$ is a partial function.  Given that $f \subseteq
  \pi_{\algebra A}^{-1}$ for each $f \in S$, we have $\bigcup S
  \subseteq \pi_{\algebra A}^{-1}$. So $\bigcup S \in (G \circ
  F)(\algebra A)$ and is the least upper bound of $S$.

For join density of $\eta_\cA[\algebra A]$, it suffices to note the join density of $\eta_\cA[\At(\algebra A)]$.
\end{proof}

As a  consequence we have the following corollaries.
\begin{corollary}\label{t:discrete-duality} There is a duality between $\typeface
  C\aralg$ and $\setq$, where $\typeface C\aralg$ is the full
  subcategory of $\aralg$ consisting of the compatibly complete
  algebras.
\end{corollary}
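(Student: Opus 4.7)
The plan is to leverage the contravariant adjunction $F \dashv G$ from \Cref{t:adj} via the standard categorical principle: a contravariant adjunction restricts to a duality on the full subcategories of objects at which the unit and counit are (natural) isomorphisms. So the task splits into two pieces: identify those subcategories, and check that the functors restrict appropriately.

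First I would pin down the subcategories. On the $\setq$ side, the proof of \Cref{t:adj} already observes that $\lambda_\pi$ is essentially the identity on $X$ (sending $x$ to the atom $a_x$ of $\cA_\pi$), so $\lambda$ is a natural isomorphism at every object and no restriction is needed. On the algebra side, \Cref{p:completion} identifies $\eta_\cA$ with the compatible completion of $\cA$; combined with the uniqueness of compatible completions (\Cref{p:1_0}), this says that $\eta_\cA$ is an isomorphism precisely when $\cA$ already is its own compatible completion, i.e.\ when $\cA$ lies in $\typeface{C}\aralg$.

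Next I would confirm that the two functors restrict correctly. The image $G(\pi) = \cA_\pi$ is visibly compatibly complete: if $S \subseteq \cA_\pi$ is pairwise compatible, then (since elements of $\cA_\pi$ are all subsets of $\pi^{-1}$ and compatibility in $\cA_\pi$ means agreement on shared domains) the union $\bigcup S$ is a partial function contained in $\pi^{-1}$, hence again in $\cA_\pi$, and is the required least upper bound. The functor $F$ is defined on all of $\aralg$, so its restriction to $\typeface{C}\aralg$ needs no further argument. Thus $F$ and $G$ cut down to an equivalence $F \from \typeface{C}\aralg \to \setq^{\operatorname{op}}$ with quasi-inverse $G$, which is the asserted duality.

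I do not expect a genuine obstacle: all the substantive content has been packaged into \Cref{t:adj}, \Cref{p:completion}, and \Cref{p:1_0}, and the remaining work is formal. The one minor bookkeeping point worth remarking is that $\eta_\cA$, being a bijective complete homomorphism of $\{-, \rest\}$-algebras, automatically has a two-sided inverse that is itself a complete homomorphism, so `isomorphism in $\typeface{C}\aralg$' carries its expected meaning.
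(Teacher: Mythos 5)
Your proposal is correct and follows essentially the same route as the paper: the paper likewise notes that $G$ lands in compatibly complete algebras, that $\lambda$ is already a natural isomorphism everywhere, and that $\eta_\cA$ is an isomorphism for compatibly complete $\cA$ because the identity on $\cA$ is then a compatible completion, so \Cref{p:completion} and \Cref{p:1_0} apply. Your closing remark that a bijective complete homomorphism has a complete inverse matches the paper's parenthetical on the same point.
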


\begin{proof}
  Clearly $G$ maps every set quotient $\pi$ to a compatibly complete
  algebra.  Thus, $G$ co-restricts to a functor
  $\setq^{\operatorname{op}} \to \typeface C\aralg$.
  Moreover, as observed in the proof of~\Cref{t:adj}, the functor $F
  \circ G$ is naturally isomorphic to the identity functor on
  $\setq$. To conclude that $\typeface C\aralg$ and $\setq$ are dually
  equivalent, it only remains to argue that $\eta$ restricted to
  $\typeface C\aralg$ provides a natural isomorphism from
  $\operatorname{Id}_{\typeface C\aralg}$ to $G \circ F$. It suffices
  to show that $\eta_\cA$ is an isomorphism (algebraically speaking)
  for every compatibly complete $\cA$ (given that isomorphisms, and
  hence their inverses, are complete homomorphisms). For this we just
  note that if $\cA$ is compatibly complete then the identity map $\cA
  \hookrightarrow \cA$ is a compatible completion of $\cA$, and then
  apply \Cref{p:completion} and \Cref{p:1_0}.
\end{proof}

\begin{corollary}
The category $\typeface C\aralg$ is a reflective subcategory of $\aralg$.
\end{corollary}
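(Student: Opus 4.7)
The plan is to produce an explicit left adjoint to the inclusion functor $I \from \typeface C\aralg \hookrightarrow \aralg$, since this is what it means for $\typeface C\aralg$ to be reflective in $\aralg$. By \Cref{p:completion}, the composite $G \circ F$ sends each atomic representable algebra to its compatible completion, which in particular lies in $\typeface C\aralg$; hence $G \circ F$ co-restricts to a functor $R \from \aralg \to \typeface C\aralg$ satisfying $I \circ R = G \circ F$. I claim that $R$ is left adjoint to $I$, with unit given by the natural transformation $\eta$ supplied by \Cref{t:adj}.

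Concretely, given $\cA \in \aralg$, $\cC \in \typeface C\aralg$, and a complete homomorphism $h \from \cA \to \cC$, I would define the transpose $\bar h \from R\cA \to \cC$ by
\[
  \bar h \coloneqq \eta_\cC^{-1} \circ (G \circ F)(h),
\]
where $\eta_\cC^{-1}$ exists because $\eta_\cC$ is an isomorphism for compatibly complete $\cC$, as established in the proof of \Cref{t:discrete-duality}. Naturality of $\eta$ applied to $h$ gives $(G \circ F)(h) \circ \eta_\cA = \eta_\cC \circ h$, and hence $\bar h \circ \eta_\cA = h$, as required. Uniqueness of this factorisation follows from the fact that $\eta_\cA[\cA]$ is join dense in $R\cA$ (by \Cref{p:completion}, $\eta_\cA$ is a compatible completion, so its image is join dense by \Cref{def:completion}), combined with the elementary observation that two complete homomorphisms agreeing on a join-dense subset must coincide.

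There is no genuine mathematical obstacle here: this corollary is essentially a formal consequence of the adjunction $F \dashv G$ (\Cref{t:adj}) together with \Cref{t:discrete-duality}, which ensures that $\eta$ becomes invertible on the full subcategory $\typeface C\aralg$. The only remaining check is naturality of the bijection $h \mapsto \bar h$ in both $\cA$ and $\cC$, which follows immediately from functoriality of $G \circ F$ and naturality of $\eta$. Alternatively, one may invoke the general categorical principle that whenever $F \dashv G$ is an adjunction and $G$ lands in a full subcategory on which the counit is an isomorphism, that subcategory is reflective with reflector $G \circ F$---an instance of which we have already constructed.
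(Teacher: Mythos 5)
Your proposal is correct and takes essentially the same approach as the paper: the paper's own proof simply observes that $G \circ F$ lands in $\typeface C\aralg$ and that its restriction to $\typeface C\aralg$ is naturally isomorphic to the identity (via $\eta$, from the proof of \Cref{t:discrete-duality}), and declares the adjunction with the inclusion a direct consequence. You have merely made that "direct consequence" explicit --- the transpose $\eta_{\algebra C}^{-1} \circ (G \circ F)(h)$, the triangle identity from naturality of $\eta$, and uniqueness from join density of $\eta_{\algebra A}[\algebra A]$ --- all of which is sound.
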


\begin{proof}
We saw in the proof of \Cref{t:discrete-duality} that the restriction of $G \circ F$ to $\typeface C\aralg$ is naturally isomorphic to the identity. It is a direct consequence that $G \circ F$, viewed as a functor $\aralg \to \typeface C\aralg$, is left adjoint to the inclusion $\typeface C\aralg \to \aralg$.
\end{proof}

We have now achieved all our main objectives for this section. However, in order to better situate these results, it is worth being precise about which category our definition of compatible completion inhabits. Thus we will specify that \Cref{def:completion} defines a \emph{compatible completion in $\operatorname{\bf{RepAlg}}$}, where $\operatorname{\bf{RepAlg}}$ is the category of representable $\{-, \rest\}$-algebras with $\{-, \rest\}$-homomorphisms. So \Cref{p:1_0} says that compatible completions in this category are unique, and \Cref{p:completion} shows how to construct them for atomic algebras. If now we replace $\operatorname{\bf{RepAlg}}$ with the category $\operatorname{\bf{RepAlg}_\infty}$ of representable $\{-, \rest\}$-algebras with \emph{complete} $\{-, \rest\}$-homomorphisms, we obtain the following definition.

\begin{definition}\label{def:completion2}
  A \defn{compatible completion in} $\operatorname{\bf{RepAlg}_\infty}$ of a representable $\{-,
  \rest\}$-algebra $\cA$ is a complete embedding $\iota \from \algebra A
  \hookrightarrow \algebra C$ of $\{-, \rest\}$-algebras such that $\cC$ is representable and compatibly complete and
  $\iota[\cA]$ is join dense in~$\cC$.
\end{definition}

Given \Cref{lem:dense_complete} and the fact that isomorphisms are complete homomorphisms, we can also claim that \Cref{p:1_0} says that compatible completions in $\operatorname{\bf{RepAlg}_\infty}$ are unique, and that \Cref{p:completion} shows how to construct them for atomic algebras.

 For Boolean algebras,  several equivalent definition of completions are
possible~\cite{completion}. The same is partially true for compatible completions in $\operatorname{\bf{RepAlg}_\infty}$.

\begin{proposition}\label{equivalent}
  Let $\iota \from \algebra A \hookrightarrow \algebra C$ be a complete
  embedding of representable $\{-, \rest\}$-algebras. Consider the following statements about $\iota$.
  \begin{enumerate}[label = (\alph*)]
  \item\label{item:a} $\algebra C$ is compatibly complete, and the
    image of $\algebra A$ is join dense in $\algebra C$.
  \item\label{item:c} $\algebra C$ is the `smallest' extension of
    $\algebra A$ that is compatibly complete. That is, $\algebra C$ is
    compatibly complete, and for every other complete embedding
    $\kappa \from \cA \hookrightarrow \cB$ into a compatibly complete
    and representable $\{-, \rest\}$-algebra~$\cB$, there exists a
    complete embedding $\widehat \kappa \from \cC \hookrightarrow \cB$
    making the following diagram commute.
    \begin{center}
      \begin{tikzpicture}[node distance = 20mm]
        \node (A) at (0,0) {$\cA$}; \node[right of = A] (C) {$\cC$};
        \node[below of = C, yshift = 5mm] (B) {$\cB$};
        
        \draw[right hook->] (A) to node[above] {$\iota$} (C); \draw
        [right hook->] (A) to node[below]{$\kappa$} (B); \draw[right hook->,
        dashed] (C) to node[right] {$\widehat \kappa$} (B);
      \end{tikzpicture}
    \end{center}
  \item\label{item:d} $\algebra C$ is the `largest' extension of
    $\algebra A$ in which the image of $\algebra A$ is join
    dense. That is, $\iota[\algebra A]$ is join dense in $\algebra C$,
    and for every other complete embedding $\kappa \from \cA
    \hookrightarrow \cB$ into a representable $\{-,
    \rest\}$-algebra~$\cB$ in which the image of $\algebra A$ is join
    dense, there exists a complete embedding $\widehat \kappa \from
    \cB \hookrightarrow \cC$ making the following diagram commute.
    \begin{center}
      \begin{tikzpicture}[node distance = 20mm]
        \node (A) at (0,0) {$\cA$}; \node[right of = A] (C) {$\cB$};
        \node[below of = C, yshift = 5mm] (B) {$\cC$};
        
        \draw[right hook->] (A) to node[above] {$\kappa$} (C); \draw
        [right hook->] (A) to node[below]{$\iota$} (B); \draw[right hook->,
        dashed] (C) to node[right] {$\widehat \kappa$} (B);
      \end{tikzpicture}
    \end{center}
  \end{enumerate}
Then $\ref{item:a} \implies \ref{item:c}$, and $\ref{item:a} \implies \ref{item:d},$ and if $\algebra A$ has a completion then all three conditions are equivalent.
\end{proposition}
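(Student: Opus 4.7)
The plan is to observe that both forward implications fall out immediately from \Cref{lemma:key}, and that the two converses, under the hypothesis that $\cA$ admits a compatible completion $\iota_0 \from \cA \hookrightarrow \cC_0$, will follow by a uniqueness argument essentially identical to that of \Cref{p:1_0}.

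For $\ref{item:a} \implies \ref{item:c}$, given any complete embedding $\kappa \from \cA \hookrightarrow \cB$ into a compatibly complete representable algebra, I will apply \Cref{lemma:key} with $\iota$ in the role of the join-dense embedding and $\kappa$ in the role of the embedding into the compatibly complete target; the lemma then delivers the required $\widehat\kappa \from \cC \hookrightarrow \cB$ satisfying $\widehat\kappa \circ \iota = \kappa$. For $\ref{item:a} \implies \ref{item:d}$ the roles swap: given $\kappa \from \cA \hookrightarrow \cB$ with $\kappa[\cA]$ join dense in $\cB$, I will apply \Cref{lemma:key} with $\kappa$ now providing the join-dense image and $\iota$ providing the compatibly complete codomain, yielding $\widehat\kappa \from \cB \hookrightarrow \cC$ with $\widehat\kappa \circ \kappa = \iota$.

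For the converses, fix a compatible completion $\iota_0 \from \cA \hookrightarrow \cC_0$. If $\iota$ satisfies \ref{item:c}, then applying \ref{item:c} itself to $\kappa \coloneqq \iota_0$ produces a complete embedding $\widehat{\iota_0} \from \cC \hookrightarrow \cC_0$ with $\widehat{\iota_0} \circ \iota = \iota_0$, while applying the already-established $\ref{item:a} \implies \ref{item:c}$ to $\iota_0$ produces a complete embedding $\widehat\iota \from \cC_0 \hookrightarrow \cC$ with $\widehat\iota \circ \iota_0 = \iota$. The composite $\widehat{\iota_0} \circ \widehat\iota$ is then a complete endomorphism of $\cC_0$ that agrees with the identity on $\iota_0[\cA]$; since any complete homomorphism is determined by its restriction to a join-dense subset, this composite must equal the identity on $\cC_0$. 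Hence $\widehat{\iota_0}$ is surjective as well as injective, so it is an isomorphism, and $\cC$ inherits compatible completeness together with join density of $\iota[\cA]$ from $\cC_0$ via $\widehat{\iota_0}^{-1}$. The case $\ref{item:d} \implies \ref{item:a}$ is entirely symmetric: apply \ref{item:d} to $\iota_0$ and $\ref{item:a} \implies \ref{item:d}$ to $\iota_0$, noting that this time the join density of $\iota[\cA]$ in $\cC$ required for the analogous composite to equal the identity is supplied by \ref{item:d} itself.

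I do not anticipate any real obstacle, since \Cref{lemma:key} carries the analytical content and the remainder is formal; the only thing to keep straight is which hypothesis supplies compatible completeness and which supplies join density at each application of the lemma.
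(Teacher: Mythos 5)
Your proposal is correct and follows essentially the same route as the paper: both forward implications are exactly the two applications of \Cref{lemma:key} (with the roles of join density and compatible completeness swapped), and the converses use the existence of a compatible completion together with the fact that a complete homomorphism is determined on a join-dense image, just as in \Cref{p:1_0}. The only cosmetic difference is that for $\ref{item:c}\implies\ref{item:a}$ the paper deduces surjectivity of $\widehat\kappa$ directly from completeness plus join density of $\kappa[\cA]$ in the completion, whereas you obtain it by composing the two induced embeddings; both are sound.
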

\begin{proof} 
  For \ref{item:a} $\implies$ \ref{item:c}, apply \Cref{lemma:key} to
  $\iota$ and $\kappa$, using join density of $\iota[\algebra A]$ in
  $\algebra C$. For \ref{item:a} $\implies$ \ref{item:d}, apply
  \Cref{lemma:key} to $\kappa$ and $\iota$, using compatible
  completeness of $\algebra C$.

  For the last part, suppose $\algebra A$ has a completion $\kappa
  \from \algebra A \hookrightarrow \algebra B$. If \ref{item:c} holds
  for $\algebra C$, then it can be applied to $\kappa \from \algebra A
  \hookrightarrow \algebra B$. Then $\widehat\kappa$ is surjective,
  since now $\kappa[\algebra A]$ is join dense in $\algebra B$ and
  $\widehat\kappa$ is complete. Hence the embedding $\widehat\kappa$
  is in fact an isomorphism, and so~\ref{item:a} holds.

  Similarly, if~\ref{item:d} holds for $\algebra C$, then apply it to
  $\kappa \from \algebra A \hookrightarrow \algebra B$ to obtain
  $\widehat\kappa \from \algebra B \hookrightarrow \algebra C$. Then
  as the image of $\algebra A$ is also join dense in $\algebra B$ we
  similarly have $\widehat\iota \from \algebra C \hookrightarrow
  \algebra B$ commuting with the embedding of $\algebra A$. Both
  compositions $\widehat\kappa \circ \widehat\iota$ and $\widehat\iota
  \circ \widehat\kappa$ are complete homomorphisms fixing the embedded
  copies of $\algebra A$ (which are join dense), hence both
  compositions are the identity. So $\widehat\kappa$ is an isomorphism
  and \ref{item:a} holds.
\end{proof}

In light of \Cref{equivalent}, it would be interesting to know which
algebras in $\operatorname{\bf{RepAlg}_\infty}$ (beyond the atomic
ones) have compatible completions and how to construct those
completions. We leave this as an open problem.

\begin{problem}
  Which representable $\{-, \rest\}$-algebras have a compatible
  completion in $\operatorname{\bf{RepAlg}_\infty}$? Describe a
  general method to construct these completions.
\end{problem}

A set of implications similar to those in \Cref{equivalent} is not
possible for compatible completions in $\operatorname{\bf RepAlg}$, as
the following example shows.

\begin{example}\label{ex:counterexample}
  We will show that \ref{item:a} $\centernot\implies$ \ref{item:c} in
  \Cref{equivalent}, if we drop the assumption of completeness of the
  homomorphisms. Let $2 \coloneqq \{0, 1\}$, and let $\algebra F$ be
  the $\{-, \rest\}$-algebra consisting of the following partial
  functions $\mathbb N \parrow 2$:
  \begin{itemize}
  \item those with finite domain,
  \item those such that the inverse image of\hspace{.5pt} $0$ is a
    cofinite set.
  \end{itemize}
  It is straightforward to check that $\algebra F$ is closed under $-$
  and $\rest$, so is indeed a $\{-, \rest\}$-algebra of partial
  functions. This algebra $\algebra F$ is atomic, and its compatible
  completion $\algebra F'$ consists of all partial functions $\mathbb
  N \parrow 2$. Let $\{ \emptyset, \operatorname{Id}\}$ be the
  cardinality 2 $\{-, \rest\}$-algebra of both partial endofunctions
  on some singleton set, and let $\algebra G = \algebra F' \times \{
  \emptyset, \operatorname{Id}\}$. Then $\algebra G$ is also
  compatibly complete (and evidently representable by partial
  functions). Define $\kappa \from \algebra F \hookrightarrow \algebra
  G$ by $\kappa \from f \mapsto (f,\emptyset)$ for $f$ with finite
  domain and $\kappa \from f \mapsto (f,\operatorname{Id})$ otherwise.

  There does not exist a homomorphism $\widehat \kappa \from \algebra
  F' \to \algebra G$ such that $\iota \circ \widehat \kappa =
  \kappa$. For suppose otherwise, and consider the constant functions
  $\overline 1 \from \mathbb N \to 2$ belonging to $\algebra F'$ and
  $\overline 0 \from \mathbb N \to 2$ belonging to both $\algebra F$
  and $\algebra F'$. Now
  \[\widehat \kappa(\overline 1)\bmeet (\overline 0,
    \operatorname{Id}) = \widehat \kappa(\overline 1) \bmeet
    \kappa(\overline 0)= \widehat \kappa(\overline 1) \bmeet \widehat
    \kappa(\overline 0) = \widehat \kappa (\overline 1 \bmeet
    \overline 0) = \widehat \kappa(\emptyset)= \kappa(\emptyset) =
    (\emptyset, \emptyset),\]
  and hence the second component of $\,\widehat \kappa(\overline 1)$
  must be $\emptyset$. However
\[\widehat\kappa(\overline 1) \rest (\overline 0, \operatorname{Id}) = \widehat\kappa(\overline 1) \rest \kappa(\overline 0) = \widehat\kappa(\overline 1) \rest \widehat\kappa(\overline 0) =\widehat\kappa(\overline 1 \rest \overline 0) = \widehat\kappa(\overline 0) = \kappa(\overline 0) = (\overline 0, \operatorname{Id}), \]
indicating that the second component of $\,\widehat \kappa (\overline 1)$ is $\operatorname{Id}$---a contradiction.
\end{example}

Note that the issue in \Cref{ex:counterexample} cannot be overcome by restricting to the full subcategory of algebras having joins for all \emph{finite} pairwise-compatible sets, as $\algebra F$ already satisfies this finite compatible completeness condition.

\section{Discrete duality for compatibly complete algebras with
  operators}\label{sec:operators}

In this section we extend the adjunction, completion, and duality
results of the previous two sections to results allowing the algebras
to be equipped with arbitrary additional completely additive operators
respecting the compatibility structure. Unless specified otherwise,
let $\algebra A$ be an atomic representable $\{-, \rest\}$-algebra.

First we introduce the class of operations we are interested in.

\begin{definition}\label{def:compatibility-preserving}
  Let $\Omega$ be an $n$-ary operation on $\algebra A$. Then $\Omega$
  is \defn{compatibility preserving} if whenever $a_i, a'_i$ are
  compatible, for all $i$, we have that $\Omega(a_1, \dots, a_n)$ and
  $\Omega(a'_1, \dots, a'_n)$ are compatible.

  The operation $\Omega$ is \defn{completely additive} if whenever the
  supremum $\join S$ exists, for $S \subseteq \algebra A$, we have
  \begin{equation}\label{completely-additive}
    \Omega(a_1, \dots, a_{i-1}, \allowbreak\join  S,
    a_{i+1}, \dots, a_n) = \join\Omega(a_1,
    \dots, a_{i-1}, S, a_{i+1}, \dots, a_n)
  \end{equation}
  for any $i$ and any $a_1, \dots, a_{i-1}, a_{i+1}, \dots, a_n \in
  \algebra A$.
\end{definition}

It is worth being aware that in the literature on algebras `with operators', the term `operator' is not merely a synonym for `operation' but means \emph{finitely additive operation}, that is, \eqref{completely-additive} holds for all finite (possibly empty) $S$.

The operations we will treat are those that are both compatibility preserving and completely additive. Hence the algebraic categories we consider in this section take the following form, for a functional signature $\sigma$ (disjoint from $\{-, \rest\}$).
\begin{definition}
The category $\aralg(\sigma)$ has
\begin{itemize}
\item
objects: algebras of the signature $\{-, \rest\} \cup \sigma$ whose $\{-, \rest\}$-reduct is atomic and representable, and such that the symbols of $\sigma$ are interpreted as compatibility preserving completely additive operations,
\item morphisms: complete homomorphisms of $(\{-, \rest\} \cup
  \sigma)$-algebras.
\end{itemize}
\end{definition}

For reference, we will briefly list concrete operations $\Omega$ on
partial functions according to whether or not they are compatibility
preserving and completely additive---by which we mean the completely
representable $\{-, \rest, \Omega\}$-algebras are a (necessarily full)
subcategory of $\aralg(\{\Omega\})$.\footnote{Note this is not the
  same as the more subtle matter of whether `representable + $\{-,
  \rest\}$-reduct is completely representable $\implies$ subcategory'
  (see, for example, the case of composition in~\cite{complete}).}
Verifying that such inclusions hold is a simple matter, achieved by
checking the operation is compatibility preserving and completely
additive for actual algebras of partial functions in which any joins
are given by unions.
 
	We can list the following compatibility preserving and completely additive operations from the literature: \emph{composition} (usually denoted $\compo$), the unary $\D$ (\emph{domain}), $\R$ (\emph{range}), and $\F$ (\emph{fixset}) operations (the identity function restricted, respectively, to the domain, range, and fixed points of the argument \cite{hirsch}), the constant $1$ (\emph{identity}), and the binary $\vartriangleleft$ (\emph{range restriction}). \emph{Intersection} is of course already expressible in our base signature. 
	
	The signature obtained by adding composition to $\{-, \rest\}$ has been studied by Schein and the representation class axiomatised under the name of \emph{difference semigroups} \cite{schein1992difference}. The signature obtained by adding composition, domain, and identity is term equivalent to $\{\compo, \bmeet, \A\}$, for which the representation class is axiomatised in \cite{DBLP:journals/ijac/JacksonS11} and the completely representable algebras axiomatised in \cite{complete}. Adding range to this last signature we obtain $\{\compo, \bmeet, \A, \R\}$, and the representation class is axiomatised in \cite{hirsch}. Axiomatising the completely representable algebras for $\{\compo, \bmeet, \A, \R\}$ is currently an open problem.
	
	Operations that fail to be compatibility preserving and completely additive usually do so because they are not even order-preserving. We can list: \emph{antidomain} (identity function restricted to complement of domain) and its range analogue \emph{antirange}, similar negative versions of $\rest$ and $\vartriangleleft$, which we might call \emph{antidomain restriction} and \emph{antirange restriction} (antidomain restriction is called \emph{minus} in \cite{BERENDSEN2010141}), \emph{override} (also known as \emph{preferential union}) and \emph{update} (see \cite{BERENDSEN2010141}), \emph{maximal iterate} (see \cite{jackson2021restriction}) and \emph{opposite} (converse restricted to points with a unique preimage \cite{finiterep}). \emph{Converse} is an operation that \emph{is} completely additive but fails to be compatibility preserving. An interesting future project would be to extend the results of this section in a way that encompasses some of these operations, in particular the several that are either order-preserving or order-reversing in each coordinate. (An analogous extension of the theory of Boolean algebras with operators can be found in \cite{GEHRKE2001345}.)

Starting with a compatibility preserving and completely additive
$n$-ary operation $\Omega$, we can define an $(n+1)$-ary relation
$R_\Omega$ on the atoms of $\algebra A$
by \begin{equation}\label{make_relation}R_\Omega x_1{\dots} x_{n+1}
  \iff \Omega(x_1, \dots, x_n) \geq x_{n+1}.\end{equation}

Next we introduce a class of relations on set quotients, which will turn out to be precisely the relations obtained from operations in the way just described.

\begin{definition}
  Take sets $X, X_0$, and a surjection $\pi \from X \twoheadrightarrow
  X_0$, and let $R$ be an $(n+1)$-ary relation on $X$. The
  \defn{compatibility relation} $C \subseteq X \times X$ is given by $x C y$
  if and only if  $\pi(x) = \pi(y) \implies x = y$. Then $R$ has
  the \defn{compatibility property} (with respect to~$\pi$) if given
  $x_1C x'_1, \dots ,\ x_nC x'_n$ and $Rx_1{\dots} x_{n+1}$ and
  $Rx'_1{\dots} x'_{n+1}$, we have $x_{n+1}C x'_{n+1}$.
\end{definition}
Observe that, in the case where $\pi$ is the canonical projection
$\pi_\cA \from \At(\cA) \twoheadrightarrow \At(\cA)/{\sim_\cA}$ for
some atomic representable algebra~$\cA$, the compatibility relation
$C$ coincides with the compatibility relation introduced
in~\Cref{algebra_compatibility} (restricted to atoms).

Given an $R$ satisfying the compatibility property, we can define an
$n$-ary operation $\Omega_R$ on the dual $\algebra A_\pi$ of $\pi
\from X \twoheadrightarrow X_0$ by conflating elements of $\algebra
A_\pi$ with their image, and then
setting
\begin{equation}\label{make_operation}\Omega_R(X_1, \dots, X_n)
  = R(X_1, \dots, X_n, \_),
\end{equation}
where $R(X_1, \dots, X_n, \_) \coloneqq \bigcup_{x_1\in X_1, \dots, x_n \in
  X_n} \{x_{n+1} \in X \mid Rx_1{\dots} x_{n+1}\}$.  Notice that a subset $X'
\subseteq X$ defines a partial function of $\cA_\pi$ exactly when it
contains at most one element of each fibre of $\pi$, that is, when $xC
x'$ for every $x, x' \in X'$. So suppose that $x_{n+1}, x'_{n+1} \in
\Omega_R(X_1, \dots, X_n)$. Then there exist $x_1\in X_1, \dots, x_n
\in X_n$ with $Rx_1{\dots} x_{n+1}$, and $x'_1\in X_1, \dots, x'_n \in
X_n$ with $Rx'_1{\dots} x'_{n+1}$. Since $X_1, \dots, X_n$ are images
of partial sections, we have $x_1Cx'_1, \dots ,x_nCx'_n$. Then by the
hypothesis that $R$ has the compatibility property, if $x_{n+1}$ and
$x'_{n+1}$ lie in the same fibre they are equal. Therefore, $\Omega_R$
is indeed a well-defined operation on~$\cA_\pi$.

Finally, we define conditions that morphisms of set quotients are
required to satisfy, when those set quotients are equipped with
additional relations.

\begin{definition}
  Take a partial function $\varphi \from X \rightharpoonup Y$ and
  $(n+1)$-ary relations $R_X$ and $R_Y$ on $X$ and $Y$. Then $\varphi$
  satisfies the \defn{reverse forth condition} if whenever
  $R_Xx_1{\dots} x_{n+1}$ and $\varphi(x_1), \dots, \varphi(x_n)$ are
  defined, then $\varphi(x_{n+1})$ is defined and
  $R_Y\varphi(x_1){\dots} \varphi(x_{n+1})$. The partial map $\varphi$
  satisfies the \defn{back condition} if whenever $\varphi(x_{n+1})$
  is defined and $R_Yy_1{\dots} y_n\varphi(x_{n+1})$, then there exist
  $x_1, \dots, x_n \in \dom(\varphi)$ such that $\varphi(x_1) = y_1,
  \dots, \varphi(x_n) = y_n$ and $R_Xx_1{\dots} x_{n+1}$.
\end{definition}

We are now ready to extend the adjunction $F \from \aralg \dashv \setq^{\operatorname{op}} \,\from\! G$ to algebras with operators. We fix a functional signature $\sigma$ (disjoint from $\{-, \rest\}$). We have already defined $\aralg(\sigma)$.

\begin{definition}
The category $\setq(\sigma)$ has
\begin{itemize}
\item objects: the objects of $\setq$ equipped with, for each $\Omega
  \in \sigma$, an $(n+1)$-ary relation $R_\Omega$ that has the
  compatibility property, where $n$ is the arity of $\Omega$,
\item
morphisms: morphisms of $\setq$ that satisfy the reverse forth condition and the back condition with respect to $R_\Omega$, for every $\Omega \in \sigma$.
\end{itemize}
\end{definition}

We are required to note at this point that both the reverse forth
condition and the back condition are preserved by composition of
partial maps (and are also satisfied by identity maps); hence
$\setq(\sigma)$ is indeed a category.

\begin{theorem}\label{thm:expansion}
There is an adjunction $F' \from \aralg(\sigma) \dashv \setq(\sigma)^{\operatorname{op}} \,\from\! G'$ that extends the adjunction $F \dashv G$ of \Cref{sec:duality} in the sense that the appropriate reducts of $F'(\algebra A)$ and $G'(\pi \from X \twoheadrightarrow X_0)$ equal $F(\algebra A)$ and $G(\pi \from X \twoheadrightarrow X_0)$, respectively.
\end{theorem}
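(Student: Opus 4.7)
My plan is to define $F'$ and $G'$ by equipping the outputs of $F$ and $G$ with the canonical relations and operations from~\eqref{make_relation} and~\eqref{make_operation}, and to verify that $F$ and $G$ on morphisms, taken without modification, already land in the enriched categories. For $F'(\cA)$, I take $F(\cA)$ together with the relations $R_\Omega$ from~\eqref{make_relation}; that $R_\Omega$ has the compatibility property with respect to $\pi_\cA$ follows from $\Omega$ being compatibility preserving, combined with the fact that two atoms $a_1, a_2 \in \cA$ satisfying both $a_1 \sim_\cA a_2$ and $a_1 \rest a_2 = a_2 \rest a_1$ must coincide. For $G'(\pi)$, I take $G(\pi) = \cA_\pi$ together with the operations $\Omega_{R_\Omega}$ from~\eqref{make_operation}; well-definedness and compatibility preservation of $\Omega_R$ follow from the compatibility property of $R$ (as sketched in the text preceding the theorem), while complete additivity holds because joins of pairwise-compatible subsets of $\cA_\pi$ are unions and~\eqref{make_operation} is manifestly union-preserving in each coordinate.

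On morphisms I set $F'(h) = F(h)$ and $G'(\varphi) = G(\varphi)$. For $F'(h)$, the reverse forth condition follows from $\Omega$ being monotone in each coordinate (a consequence of complete additivity applied to $\bigvee\{a,b\} = b$ when $a\leq b$), together with $x_i \leq h(Fh(x_i))$ from~\eqref{eq:7} and the fact that $h$ preserves $\Omega$: these combine to give $x_{n+1} \leq h(\Omega(Fh(x_1), \dots, Fh(x_n)))$, whence~\eqref{eq:disc-dual} yields $Fh(x_{n+1}) \leq \Omega(Fh(x_1), \dots, Fh(x_n))$. For the back condition, I begin with $x_{n+1} \leq h(Fh(x_{n+1})) \leq h(\Omega(y_1, \dots, y_n)) = \Omega(h(y_1), \dots, h(y_n))$; expanding each $h(y_i)$ as the join of the atoms of $\cB$ below it (possible since $\cB$ is atomistic) and iterating complete additivity presents $\Omega(h(y_1), \dots, h(y_n))$ as a join indexed by tuples of atoms, from which the atom $x_{n+1}$ must lie below a single summand $\Omega(b_1, \dots, b_n)$ because $\cB$ is completely representable. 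These $b_i$ belong to $\At(h(y_i)^\downarrow)$, hence $Fh(b_i) = y_i$, and they witness $R_\Omega b_1 \dots b_n x_{n+1}$. For $G'(\varphi)$, unpacking~\eqref{eq:17} and~\eqref{make_operation} directly yields that $G\varphi$ preserves each $\Omega$: the reverse forth condition on $\varphi$ supplies one inclusion of images and the back condition supplies the other.

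By~\Cref{t:adj} the natural transformations $\eta$ and $\lambda$ and the triangle identities are already in place, so it only remains to check that each $\eta_\cA$ preserves the $\sigma$-operations and that each $\lambda_\pi$ satisfies the reverse forth and back conditions. The verification for $\eta_\cA$ reruns the atom-below-join argument from the back condition: expand each $a_i$ as a join of atoms and invoke complete additivity and complete representability. For $\lambda_\pi$, under the identification $x \leftrightarrow \{(\pi(x),x)\}$ of $X$ with $\At(\cA_\pi)$, the relation derived via~\eqref{make_relation} from $\Omega_R$ on $\cA_\pi$ coincides on the nose with the original $R$ on $X$, so the identity function trivially satisfies both conditions. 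The principal obstacle throughout is the back condition for $F'(h)$, and equivalently the preservation of $\sigma$-operations by $\eta_\cA$: one must turn the inequality $x_{n+1} \leq \Omega(h(y_1), \dots, h(y_n))$ into a concrete witnessing tuple. Compatibility preservation plays no role in this step; instead it is complete additivity together with complete representability that do the work, which is precisely why the theorem requires compatibility preserving \emph{completely additive} operators on atomic representable algebras.
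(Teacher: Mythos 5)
Your proposal is correct and follows essentially the same route as the paper: the same five verifications (compatibility property of $R_\Omega$, reverse forth/back for $Fh$ via the Galois connection~\eqref{eq:disc-dual} and iterated complete additivity over atoms, well-definedness and complete additivity of $\Omega_R$ via unions, preservation by $G\varphi$ from the two bisimulation-style conditions, and the unit/counit checks), with the ``atom below a join lies below a joinand'' step justified by complete representability just as the paper implicitly does. No gaps to report.
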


The proof of this theorem takes up most of the remainder of the
paper. The reader is likely to have understood by now how to form $F'$
and $G'$. For $F'$: given an atomic representable $\{-,
\rest\}$-algebra $\algebra A$ equipped with compatibility preserving
completely additive operators indexed by $\sigma$, take $F(\algebra
A)$ and equip it with, for each operation $\Omega$ on $\algebra A$,
the relation $R_\Omega$ defined according to
\eqref{make_relation}. For $G'$: given a set quotient $\pi \from X
\twoheadrightarrow X_0$ equipped with relations, take $G(\pi \from X
\twoheadrightarrow X_0)$ and equip it with, for each relation $R$, the
operation $\Omega_R$ defined according to \eqref{make_operation}. The
proof consists therefore of establishing the following facts.

\begin{enumerate}
\item The $F'$ we wish to define is well-defined on objects. That is,
  the defined relations $R_\Omega$ have the compatibility property
  (\Cref{l:10}).

\item
$F'$ is well-defined on morphisms. That is, for a morphism $h$ in
$\aralg$ that preserves additional operations, the partial map $Fh$
satisfies the reverse forth and the back conditions with respect to
each pair of relations (\Cref{l:11}).

\item
$G'$ is well-defined on objects: each defined $\Omega_R$ is a
compatibility preserving and completely additive operation (\Cref{l:12}).

\item
$G'$ is well-defined on morphisms: given a morphism $H$ in $\setq$,
the defined operations are preserved by $G\varphi$ (\Cref{l:13}).

\item The unit and counit used in \Cref{t:adj} are still permitted
  families of morphisms. That is, for each algebra $\algebra A$, the
  map $\eta_{\algebra A}$ preserves the additional operations, and for
  each set quotient $\pi$, the map $\lambda_\pi$ satisfies the reverse
  forth condition and the back condition (\Cref{l:14}).
\end{enumerate}

\begin{lemma}\label{l:10}
If an operation $\Omega$ is compatibility preserving, then $R_\Omega$ has the compatibility property.
\end{lemma}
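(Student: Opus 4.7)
The plan is to reduce the claim to the downward closure of compatibility, once we have identified the correct notion of compatibility on atoms.

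First I would verify a compatibility coincidence on atoms: restricted to $\At(\cA)$, the relation $C$ defined via the projection $\pi_\cA$ (namely $xCy \iff (x \sim_\cA y \implies x = y)$) coincides with the algebraic compatibility relation of \Cref{algebra_compatibility} (namely $x \rest y = y \rest x$). Using \Cref{p:1}\ref{item:3} and~\ref{item:4}, for atoms $x, y \in \At(\cA)$ one checks that $x \rest y$ is $\sim_\cA$-equivalent to $x$ and lies below~$y$, so that: if $x \not\sim_\cA y$ then $x \rest y = 0 = y \rest x$; if $x \sim_\cA y$, then using the Boolean algebra $[x]^\downarrow$ the element $x \rest y$ must equal~$y$ (since it is a nonzero element below the atom~$y$ of~$[x]^\downarrow$), and symmetrically $y \rest x = x$, so compatibility holds iff $x = y$. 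The two notions of compatibility therefore agree on atoms.

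Next, I would invoke compatibility preservation of $\Omega$. Assume $x_1 C x'_1, \dots, x_n C x'_n$ and that both $R_\Omega x_1 \dots x_{n+1}$ and $R_\Omega x'_1 \dots x'_{n+1}$ hold. By the coincidence just noted, each pair $x_i, x'_i$ is algebraically compatible. Since $\Omega$ is compatibility preserving, we conclude that $\Omega(x_1, \dots, x_n)$ and $\Omega(x'_1, \dots, x'_n)$ are algebraically compatible in $\cA$.

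Finally, from the defining relations we have
\[x_{n+1} \leq \Omega(x_1, \dots, x_n) \qquad \text{and} \qquad x'_{n+1} \leq \Omega(x'_1, \dots, x'_n).\]
Because the algebraic compatibility relation is, by definition, a compatibility relation in the sense of the earlier definition, it is downward closed in $\cA \times \cA$. Hence $x_{n+1}$ and $x'_{n+1}$ are compatible in $\cA$, which by the coincidence of notions on atoms gives $x_{n+1} C x'_{n+1}$, as required.

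The only nontrivial step is the initial identification of the two compatibility relations on atoms; the rest is simply downward closure applied above two atoms. I do not expect any genuine obstacle here, provided one is careful about invoking the correct results from \Cref{p:1} when doing the small calculation inside $[x]^\downarrow$.
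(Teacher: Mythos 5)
Your proposal is correct and takes essentially the same route as the paper: the paper records the coincidence of the quotient-induced relation $C$ with the algebraic compatibility relation of \Cref{algebra_compatibility} (restricted to atoms) as an unproved observation just after the definition of the compatibility property, and its proof of \Cref{l:10} then consists exactly of your last two paragraphs --- compatibility preservation of $\Omega$ followed by downward closure of compatibility above the two atoms $x_{n+1} \leq \Omega(x_1,\dots,x_n)$ and $x'_{n+1} \leq \Omega(x'_1,\dots,x'_n)$. One small slip in your verification of the coincidence: for atoms $x,y$ the element $x \rest y$ is \emph{not} in general $\sim_\cA$-equivalent to $x$; rather $[x \rest y] = [x] \wedge [y]$ by \Cref{p:1}\ref{item:3}, which equals $[x]$ only when $x \sim_\cA y$ and equals $[0] = \{0\}$ otherwise --- but this corrected statement is precisely what your ensuing case analysis actually uses, so the argument goes through.
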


\begin{proof}
  Take $x_1Cx'_1, \dots ,x_nCx'_n$ and $R_\Omega x_1\dots x_{n+1}$ and
  $R_\Omega x'_1\dots x'_{n+1}$. Then $x_i, x'_i \in \cA$ are compatible, for each $i$. So since $\Omega$ is compatibility preserving, $\Omega(x_1, \dots, x_n)$ and $\Omega(x'_1,
  \dots, x'_n)$ are compatible. We have, by the hypotheses and the
  definition of $R_\Omega $, that $\Omega(x_1, \dots, x_n) \geq
  x_{n+1}$ and $\Omega(x'_1, \dots, x'_n) \geq x'_{n+1}$. Hence
  $x_{n+1}$ and $x'_{n+1}$ are compatible elements of $\cA$. Now
  $C$ is just the restriction of compatibility to pairs of atoms,
  hence $x_{n+1}Cx'_{n+1}$, as required.
\end{proof}

\begin{lemma}\label{l:11}
  
  Let $h \from \algebra A \to \algebra B$ be a complete homomorphism
  of atomic representable $\{-, \rest\}$-algebras, and let
  $\Omega^{\algebra A}$ and $\Omega^{\algebra B}$ be compatibility
  preserving completely additive $n$-ary operations on $\algebra A$
  and $\algebra B$ respectively. If $h$ validates
  \[h(\Omega^{\algebra A}(a_1, \dots, a_n)) = \Omega^{\algebra
      B}(h(a_1), \dots, h(a_n)),\]
  then $Fh$ satisfies the reverse forth and the back conditions with
  respect to $R_{\Omega^{\algebra B}}$ and $R_{\Omega^{\algebra A}}$.
\end{lemma}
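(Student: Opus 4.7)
The plan is to verify the two conditions separately: reverse forth follows directly from the Galois connection~\eqref{eq:disc-dual} for $Fh$ combined with monotonicity, while the back condition is the more substantial part and requires complete additivity together with atomicity of $\algebra B$. For reverse forth, given $R_{\Omega^{\algebra B}} y_1\dots y_{n+1}$ with $Fh(y_i) = x_i$ defined for each $i \leq n$, the Galois connection gives $y_i \leq h(x_i)$, so monotonicity of $\Omega^{\algebra B}$ (implied by complete additivity) combined with preservation of $\Omega$ by $h$ yields
\[ h(\Omega^{\algebra A}(x_1,\dots,x_n)) = \Omega^{\algebra B}(h(x_1),\dots,h(x_n)) \geq \Omega^{\algebra B}(y_1,\dots,y_n) \geq y_{n+1}. \]
Hence $y_{n+1}$ is an atom of $\algebra B$ lying below $h(\Omega^{\algebra A}(x_1,\dots,x_n))$, which by \Cref{p:1}\ref{item:4} means $Fh(y_{n+1})$ is defined, and a second application of the Galois connection gives $Fh(y_{n+1}) \leq \Omega^{\algebra A}(Fh(y_1),\dots,Fh(y_n))$, as required.

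For the back condition, set $x_{n+1} \coloneqq Fh(y_{n+1})$ and assume $\Omega^{\algebra A}(x_1,\dots,x_n) \geq x_{n+1}$ with $x_i \in \At(\algebra A)$. Applying $h$, using preservation of $\Omega$, and iteratively invoking complete additivity together with atomicity of $\algebra B$ (to expand each $h(x_i)$ as the join of the atoms below it), one expresses $\Omega^{\algebra B}(h(x_1),\dots,h(x_n))$ as a join of atomic values $\Omega^{\algebra B}(y_1,\dots,y_n)$ ranging over tuples with $y_i \in \At(\algebra B)$ and $y_i \leq h(x_i)$. Since $y_{n+1} \leq h(x_{n+1}) \leq \Omega^{\algebra B}(h(x_1),\dots,h(x_n))$ (the first inequality coming from the Galois connection), the task reduces to extracting from this join a single tuple satisfying $\Omega^{\algebra B}(y_1,\dots,y_n) \geq y_{n+1}$.

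The main obstacle is this extraction step, since the `atom below a join implies atom below some summand' property can fail in general posets. I would overcome it by working inside the atomic Boolean algebra $b^\downarrow$ with $b \coloneqq \Omega^{\algebra B}(h(x_1),\dots,h(x_n))$, where the property does hold: if an atom $y$ of $b^\downarrow$ were not below any summand $c_j$, then atomicity would force $y \bmeet c_j = 0$, hence $c_j \leq b - y$ for every $j$, contradicting $\bigvee_j c_j = b$. Applied to our situation this yields atoms $y_1,\dots,y_n$ of $\algebra B$ with $y_i \leq h(x_i)$ and $\Omega^{\algebra B}(y_1,\dots,y_n) \geq y_{n+1}$; and because each $x_i$ is an atom of $\algebra A$, so that $x_i^\downarrow = \{0, x_i\}$, the map $\varphi_{x_i}$ has image $\{x_i\}$, giving $Fh(y_i) = x_i$ automatically and completing the verification.
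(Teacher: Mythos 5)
Your proof is correct and follows essentially the same route as the paper's: the reverse forth condition via monotonicity of $\Omega^{\algebra B}$, preservation by $h$, and the Galois connection \eqref{eq:disc-dual}; the back condition via complete additivity together with atomicity (atomisticity) of $\algebra B$, ending with the observation that $Fh(y_i)=x_i$ is automatic because each $x_i$ is an atom. The only place you go beyond the paper is in explicitly justifying the ``atom below a join lies below some summand'' extraction by working in the atomic Boolean algebra $b^\downarrow$ with $b = \Omega^{\algebra B}(h(x_1),\dots,h(x_n))$ --- the paper simply asserts this step, and your justification of it is valid.
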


\begin{proof}
  We write $R_\cA$ for $R_{\Omega^{\algebra A}}$, and we write $R_\cB$
  for $R_{\Omega^{\algebra B}}$. For the reverse forth condition,
  suppose $R_\cB y_1\dots y_{n+1}$ holds and that $Fh(y_1), \dots,
  Fh(y_n)$ are defined. Denote $Fh(y_1), \dots, Fh(y_n)$ by $x_1,
  \dots, x_n$ respectively. By the definition of $Fh$, we have $h(x_i)
  \geq y_i$ for each $i$. Then $h(\Omega^{\algebra A}(x_1, \dots,
  x_n)) = \Omega^{\algebra B}(h(x_1), \dots, h(x_n)) \geq
  \Omega^{\algebra B}(y_1, \dots, y_n)$, as $\Omega^{\algebra B}$ is
  order preserving, since it is completely additive. But
  $\Omega^{\algebra B}(y_1, \dots, y_n) \geq y_{n+1}$ by the
  hypothesis $R_\cB y_1\dots y_{n+1}$ and the definition of $R_\cB$.
  Since $h(\Omega^{\algebra A}(x_1, \dots, x_n)) \geq y_{n+1}$, we
  have that $Fh$ is defined at $y_{n+1}$ and $\Omega^\cA(x_1, \dots,
  x_n) \geq Fh(y_{n+1})$.  By the definition of $R_\cA$, the relation
  $R_\cA x_1{\dots} x_{n}Fh(y_{n+1})$ holds, and the reverse forth
  condition is established.

  For the back condition, suppose that $Fh(y_{n+1})$ is defined and
  that the relation $R_\cA x_1{\dots} x_nFh(y_{n+1})$ holds. Write
  $x_{n+1}$ for $Fh(y_{n+1})$. Then by the definition of $R_\cA $, the
  inequality $\Omega^{\algebra A}(x_1, \dots, x_n) \geq x_{n+1}$
  holds. Hence
  \[\Omega^{\algebra B}(h(x_1), \dots, h(x_n)) = h(\Omega^{\algebra
      A}(x_1, \dots, x_n)) \geq h(x_{n+1}) \geq y_{n+1}.\]
  As $\algebra B$ is atomic, it is atomistic. Hence by iterative
  application of the complete additivity of $\Omega^{\algebra B}$ to
  each argument, we find 
  \[\join \{\Omega^{\algebra B}(y_1, \dots, y_n) \mid y_1, \dots,
    y_n \in \At(\algebra B) : y_1 \leq h(x_1), \dots, y_n \leq
    h(x_n)\} \geq y_{n+1}.\]
  Since $y_{n+1}$ is an atom, there are therefore some $y_1, \dots,
  y_n \in \At(\algebra B)$ with $y_i \leq h(x_i)$ for each $i$, such
  that $\Omega^{\algebra B}(y_1, \dots, y_n) \geq y_{n+1}$. Then by
  the definitions, $Fh(y_i) = x_i$ for each $i$, and $R_\cB y_1{\dots}
  y_{n+1}$ holds; hence the back condition is established.
\end{proof}

\begin{lemma}\label{l:12}
  If a relation $R$ has the compatibility property, then $\Omega_R$ is
  compatibility preserving and completely additive.
\end{lemma}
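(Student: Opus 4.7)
My plan is to verify the two properties separately, relying on the set-theoretic definition of $\Omega_R$ and on a careful analysis of joins in $\cA_\pi$.

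For compatibility preservation, suppose that $X_i$ and $X'_i$ are compatible in $\cA_\pi$ for each $i=1,\dots,n$, and take arbitrary $y \in \Omega_R(X_1,\dots,X_n)$ and $y' \in \Omega_R(X'_1,\dots,X'_n)$. By the definition of $\Omega_R$, I can extract witnesses $x_i \in X_i$ and $x'_i \in X'_i$ with $Rx_1\dots x_n y$ and $Rx'_1\dots x'_n y'$. The key point is that $x_i C x'_i$ holds for each $i$: if $\pi(x_i) \ne \pi(x'_i)$ this is trivial, while if $\pi(x_i) = \pi(x'_i)$ the compatibility of the partial sections $X_i$ and $X'_i$ forces $x_i = x'_i$. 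I can then invoke the compatibility property of $R$ to conclude $y C y'$, which is exactly the statement that $\Omega_R(X_1,\dots,X_n)$ and $\Omega_R(X'_1,\dots,X'_n)$ are compatible.

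For complete additivity, I first need to describe joins in $\cA_\pi$. Because elements are partial sections of $\pi$, a subset $S \subseteq \cA_\pi$ has a join exactly when $\bigcup S$ is again a partial section (equivalently, $S$ is pairwise compatible), and in that case $\join S = \bigcup S$. So assuming $\join S$ exists in the $i$-th coordinate, I can replace it by $\bigcup S$ and compute directly from the definition
\[\Omega_R(a_1,\dots,a_{i-1},\bigcup S,a_{i+1},\dots,a_n) = \bigcup_{s\in S}\Omega_R(a_1,\dots,a_{i-1},s,a_{i+1},\dots,a_n),\]
simply by distributing the union over the defining formula for $\Omega_R$. Since the left-hand side is an element of $\cA_\pi$ (that is, a partial section), its expression as a union on the right shows that the family $\{\Omega_R(a_1,\dots,s,\dots,a_n) \mid s \in S\}$ is pairwise compatible, so its join exists and equals this union.

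I do not anticipate any serious obstacle: the argument is essentially a bookkeeping exercise once the correspondence between compatibility in $\cA_\pi$ and the relation $C$ on $X$ is in hand, and once one observes that joins in $\cA_\pi$ coincide with set-theoretic unions whenever they exist. The only mild subtlety is recognising that complete additivity needs the join on the right-hand side to actually exist in $\cA_\pi$, and that this existence is automatic because the corresponding union is already known to lie in $\cA_\pi$.
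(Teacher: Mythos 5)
Your proof is correct and follows essentially the same route as the paper: joins in $\cA_\pi$ are unions of images, the union distributes through the definition of $\Omega_R$, and existence of the right-hand join is automatic because the union is already a partial section. The only cosmetic difference is in compatibility preservation, where the paper observes that $\Omega_R(X_1\cup X'_1,\dots,X_n\cup X'_n)$ is a well-defined partial section of which both values are restrictions, whereas you rerun the element-wise argument with the compatibility property of $R$ directly; both reduce to the same use of that property.
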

\begin{proof}
  To see that $\Omega_R$ is compatibility preserving, let $X_i, X'_i
  \in \algebra A_\pi$ be compatible, for each $i$. Since $\Omega_R(X_1
  \cup X_1', \dots, X_n \cup X'_n)$ is (the image of) a well-defined
  partial section, of which $\Omega_R(X_1 , \dots, X_n )$ and
  $\Omega_R( X'_1, \dots, X'_n)$ are restrictions, $\Omega_R(X_1 ,
  \dots, X_n )$ and $\Omega_R( X'_1, \dots, X'_n)$ are compatible.

  To see that~$\Omega_R$ is completely additive, let $X_1, \dots, X_n
  \in \cA_\pi$ and $i \in \{1, \dots, n\}$, and suppose $\mathcal S$
  is a subset of~$\cA_\pi$ whose join~$\join \mathcal S$ exists. So
  $\join \mathcal S = \bigcup \mathcal S$. It is clear from the
  definition of $\Omega_R$ that
  \begin{align*}\Omega_R(X_1, \dots, X_{i-1}, \bigcup \mathcal S,
    X_{i+1}, \dots, X_n)&= \bigcup_{x_i \in \bigcup {\mathcal S}}
                          \Omega_R(X_1, \dots, X_{i-1}, x_i, X_{i+1},
                          \dots, X_n)\\&= \bigcup_{T \in \mathcal
    S}\bigcup_{x_i \in T} \Omega_R(X_1, \dots, X_{i-1}, x_i, X_{i+1},
    \dots, X_n)\\&= \bigcup_{T \in \mathcal S}\Omega_R(X_1, \dots,
    X_{i-1}, T, X_{i+1}, \dots, X_n)
  \end{align*}
  and hence provides a supremum for $\{\bigcup_{T \in \mathcal
    S}\Omega_R(X_1, \dots, X_{i-1}, T, X_{i+1}, \dots, X_n) \mid T \in
  \mathcal S\}$, as required.
\end{proof}

\begin{lemma}\label{l:13}
  Let $\varphi \from X \parrow Y$ define a morphism in $\setq$ from
  $(\pi \from X \twoheadrightarrow X_0)$ to $(\rho \from Y
  \twoheadrightarrow Y_0)$, and let $R_X$ and $R_Y$ be $(n+1)$-ary
  relations on $X$ and $Y$ respectively, both having the compatibility
  property. If $\varphi$ satisfies the reverse forth and the back
  conditions with respect to $R_X$ and $R_Y$, then the $n$-ary
  operations $\Omega_{R_X}$ and $\Omega_{R_Y}$ validate
  \[G\varphi(\Omega_{R_Y}(Y_1, \dots, Y_n)) =
    \Omega_{R_X}(G\varphi(Y_1), \dots, G\varphi(Y_n)).\]
\end{lemma}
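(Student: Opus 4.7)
The plan is to identify elements of $\cA_\pi$ and $\cA_\rho$ with their images (as the definition of $\Omega_R$ already does), under which $G\varphi(g)$ becomes the subset $\{x \in \dom(\varphi) \mid \varphi(x) \in g\}$ of $X$. Unwinding the two sides as subsets of~$X$, the equation to be shown reduces to the equality between
\[\{x \in \dom(\varphi) \mid \exists y_1 \in Y_1, \dots, y_n \in Y_n \from R_Y y_1 \dots y_n \varphi(x)\}\]
on the left, and
\[\{x_{n+1} \in X \mid \exists x_1, \dots, x_n \in \dom(\varphi) \from \varphi(x_i) \in Y_i \text{ for all } i \text{ and } R_X x_1 \dots x_n x_{n+1}\}\]
on the right.

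For the left-to-right inclusion, I would start with an $x \in \dom(\varphi)$ witnessed by $y_1 \in Y_1, \dots, y_n \in Y_n$ with $R_Y y_1 \dots y_n \varphi(x)$, and then invoke the back condition at $\varphi(x)$ (which is defined, since $x \in \dom(\varphi)$) to produce $x_1, \dots, x_n \in \dom(\varphi)$ with $\varphi(x_i) = y_i \in Y_i$ and $R_X x_1 \dots x_n x$. This exhibits $x$ as an element of the right-hand side.

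For the right-to-left inclusion, I would take $x_{n+1}$ in the right-hand side with witnesses $x_1, \dots, x_n \in \dom(\varphi)$ satisfying $\varphi(x_i) \in Y_i$ and $R_X x_1 \dots x_n x_{n+1}$, and apply the reverse forth condition to conclude that $\varphi(x_{n+1})$ is defined and $R_Y \varphi(x_1) \dots \varphi(x_n) \varphi(x_{n+1})$ holds, so that the $y_i \coloneqq \varphi(x_i) \in Y_i$ serve as witnesses placing $x_{n+1}$ in the left-hand side.

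This is essentially a routine verification once the definitions are unfolded: the back condition and the reverse forth condition have been engineered so as to give exactly the two containments. I do not foresee any real obstacle. The only mild point of care is to remember that both sides belong to $\cA_\pi$, hence are partial sections of $\pi^{-1}$; this is guaranteed by \Cref{l:12} (which ensures $\Omega_{R_X}$ takes values in $\cA_\pi$) and \Cref{l:1} (which ensures $G\varphi$ lands in $\cA_\pi$), so the set-theoretic equality of images automatically yields equality as elements of $\cA_\pi$.
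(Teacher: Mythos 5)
Your proposal is correct and matches the paper's own proof essentially step for step: both identify partial sections with their images so that $G\varphi$ becomes inverse image under $\varphi$, then establish the left-to-right inclusion via the back condition and the right-to-left inclusion via the reverse forth condition. The closing remark that both sides are genuine elements of $\cA_\pi$ by \Cref{l:12} and \Cref{l:1} is a sensible (if implicit in the paper) observation.
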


\begin{proof}
  Recall that we are identifying elements of $G(\pi)$ and
  $G(\rho)$---partial sections---with their images, and note that
  according to this view, $G\varphi$ is given by $\varphi^{-1}$
  (inverse image). Let $Y_1, \dots, Y_n \in G(\rho)$.
 
  First we show that
  \[\varphi^{-1}(\Omega_{R_Y}(Y_1, \dots, Y_n)) \subseteq
    \Omega_{R_X}(\varphi^{-1}(Y_1), \dots, \varphi^{-1}(Y_n)),\]
  and later the reverse inclusion. So let $x_{n+1}$ be an arbitrary
  element of $\varphi^{-1}(\Omega_{R_Y}(Y_1, \dots, \allowbreak
  Y_n))$. That is, $\varphi$ is defined on $x_{n+1}$, with value
  $y_{n+1}$ say, and there are some ${y_1 \in Y_1}, \dots, {y_n \in
    Y_n}$ such that $Ry_1{\dots}y_{n+1}$. So by the back condition,
  there exist $x_1, \dots, x_n \in \dom(\varphi)$ such that
  $\varphi(x_1) = y_1, \dots, \varphi(x_n) = y_n$ and $R_Xx_1{\dots}
  x_{n+1}$. Then $x_i \in \varphi^{-1}(Y_i)$ for each $i \leq
  n$. Hence by the definition of $\Omega_{R_X}$, we have $x_{n+1} \in
  \Omega_{R_X}(\varphi^{-1}(Y_1), \dots, \varphi^{-1}(Y_n))$, as
  required.

  Now we show that
  \[\varphi^{-1}(\Omega_{R_Y}(Y_1, \dots, Y_n)) \supseteq \Omega_{R_X}(\varphi^{-1}(Y_1), \dots, \varphi^{-1}(Y_n)).\]
  Let $x_{n+1}$ be an arbitrary element of
  $\Omega_{R_X}(\varphi^{-1}(Y_1), \dots, \varphi^{-1}(Y_n))$, so
  there exist $x_1 \in \varphi^{-1}(Y_1), \dots, x_n \in
  \varphi^{-1}(Y_n)$ with $R_Xx_1{\dots}x_{n+1}$. (So in particular
  $\varphi(x_1), \dots, \varphi(x_n)$ are defined.) Then by the
  reverse forth condition, $\varphi(x_{n+1})$ is defined and
  $R_Y\varphi(x_1){\dots} \varphi(x_{n+1})$. Since $\varphi(x_i) \in
  Y_i$, for $i \leq n$, we then have $\varphi(x_{n+1}) \in
  \Omega_{R_Y}(Y_1, \dots, Y_n)$. Hence $x_{n+1} \in
  \varphi^{-1}(\Omega_{R_Y}(Y_1, \dots, Y_n))$.
\end{proof}

\begin{lemma}\label{l:14}
  Let $\algebra A$ be an atomic representable $\{-, \rest \}$-algebra
  and $\Omega$ be a compatibility preserving completely additive
  $n$-ary operation on $\algebra A$. Then the map $\eta_\algebra A$
  used in \Cref{t:adj} validates
  \[\eta_{\algebra A}(\Omega(a_1, \dots, a_n)) =
    \Omega_{R_\Omega}(\eta_{\algebra A}(a_1), \dots, \eta_{\algebra
      A}(a_n)).\]

Let $\pi \from X \twoheadrightarrow X_0$ be a set quotient, and $R$ be
an $(n+1)$-ary relation on $X$ with the compatibility property. The map $\lambda_\pi$ used in \Cref{t:adj} satisfies the reverse forth
condition and the back condition with respect to $R$ and
$R_{\Omega_R}$.
\end{lemma}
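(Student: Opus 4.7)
The plan is to prove the two assertions separately, both resting on the observation that elements of $(G \circ F)(\cA)$ and of $\cA_\pi$ can be identified with their images in $\At(\cA)$ and in $X$ respectively, so equalities between such elements reduce to equalities between the underlying subsets.

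For the first assertion, I would note that both $\eta_\cA(\Omega(a_1, \dots, a_n))$ and $\Omega_{R_\Omega}(\eta_\cA(a_1), \dots, \eta_\cA(a_n))$ are partial functions of the form $\{([x], x) \mid x \in Y\}$ with $Y \subseteq \At(\cA)$, so it suffices to show their images agree. The image on the left is $\{x_{n+1} \in \At(\cA) \mid x_{n+1} \leq \Omega(a_1, \dots, a_n)\}$, while unfolding \eqref{make_operation} together with the definition of $R_\Omega$ shows that the image on the right is the set of atoms $x_{n+1}$ for which there exist atoms $x_i \leq a_i$ with $x_{n+1} \leq \Omega(x_1, \dots, x_n)$. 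One inclusion is immediate from monotonicity of $\Omega$ (a consequence of complete additivity). For the other, I would combine atomisticity (\cite[\begin{NoHyper}Lemma~6.13\end{NoHyper}]{diff-rest1}) with repeated use of complete additivity to rewrite $\Omega(a_1, \dots, a_n) = \join \{\Omega(x_1, \dots, x_n) \mid x_i \in \At(\cA),\ x_i \leq a_i\}$.

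The step I expect to be the main obstacle is then showing that an atom $x_{n+1}$ below this join must in fact lie below one of the summands (``atoms are join-prime''). My intended solution is to invoke the fact, established in \Cref{p:completion}, that $\eta_\cA$ is a complete $\{-, \rest\}$-homomorphism into an algebra of partial functions in which arbitrary joins are unions. The singleton $\eta_\cA(x_{n+1})$ is thereby contained in $\bigcup \eta_\cA[\{\Omega(x_1, \dots, x_n) \mid \dots\}]$, hence in one of the summands $\eta_\cA(\Omega(x_1, \dots, x_n))$; injectivity of $\eta_\cA$ then yields $x_{n+1} \leq \Omega(x_1, \dots, x_n)$ as required.

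For the second assertion, the argument is essentially a direct unwinding of definitions. The atoms of $\cA_\pi$ are the singletons $a_x \coloneqq \{(\pi(x), x)\}$ for $x \in X$, and $\lambda_\pi$ is the total map $x \mapsto a_x$. A direct computation using \eqref{make_operation} shows that the image of $\Omega_R(a_{x_1}, \dots, a_{x_n})$ in $X$ is precisely $\{y \mid R x_1 \dots x_n y\}$, so $a_y \leq \Omega_R(a_{x_1}, \dots, a_{x_n})$ iff $R x_1 \dots x_n y$, which says that $R_{\Omega_R}\, a_{x_1} \dots a_{x_n} a_y$ holds iff $R x_1 \dots x_n y$ holds. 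Under the bijection $x \leftrightarrow a_x$ effected by $\lambda_\pi$, the two relations thus coincide; the reverse forth and back conditions become immediate, the former because $\lambda_\pi$ is total, the latter because every atom of $\cA_\pi$ is in the image of $\lambda_\pi$.
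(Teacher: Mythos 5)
Your proposal is correct and follows essentially the same route as the paper: for the first identity, both arguments identify partial functions with their images and use complete additivity together with atomisticity to show the two sets of atoms coincide; for the second, both reduce to checking that $R$ and $R_{\Omega_R}$ coincide under the bijection $x \mapsto a_x$. The only difference is that you explicitly justify the join-primality of atoms (via the complete representation $\eta_{\algebra A}$, under which joins are unions and atoms are singletons), a step the paper's proof leaves implicit in the phrase ``since $\Omega$ is completely additive and $\algebra A$ is atomistic''.
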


\begin{proof}
  For the first part, we unwrap the definition of the right-hand side
  of the equality.  Identifying partial functions with their images,
  we have
  \[\Omega_{R_\Omega} (\eta_{\algebra A}(a_1), \dots, \eta_{\algebra
      A}(a_n)) = R_{\Omega} (\At(a_1^\downarrow), \dots,
    \At(a_n^\downarrow),\_),\]
  and by the definition of $R_\Omega$, this set consists of all
  $x_{n+1} \in \At(\cA)$ for which there are $x_i \in
  \At(a_i^\downarrow)$, $i = 1, \dots, n$, such that $\Omega(x_1,
  \dots, x_n) \geq x_{n+1}$.  Since $\Omega$ is completely additive
  and $\algebra A$ is atomistic, such atoms $x_{n+1}$ are precisely
  those in the downset $\Omega(a_1, \dots, a_n)^\downarrow$. Thus
  \[\Omega_{R_\Omega} (\eta_{\algebra A}(a_1), \dots, \eta_{\algebra
      A}(a_n)) = \At(\Omega(a_1, \dots, a_n)^\downarrow),\]
  which is precisely the definition of the left-hand side
  $\eta_{\algebra A}(\Omega(a_1, \dots, a_n))$, so we are done.

  For the reverse forth and back conditions, since we saw in the proof
  of \Cref{t:adj} that $\pi$ and $(F \circ G)(\pi)$ are isomorphic via the
  correspondence $x \mapsto \{x\}$, it suffices to show that every
  relation $R \subseteq X^{n+1}$ coincides with $R_{\Omega_R}$ under
  this identification. And indeed, by definition, we have
  \[R_{\Omega_R}\{x_1\}\dots \{x_{n+1}\} \iff \Omega_R(\{x_1\}, \dots,
    \{x_{n}\}) \supseteq \{x_{n+1}\} \iff Rx_1 \dots x_{n+1}.\popQED\]
\end{proof}

This completes the proof that $F' \dashv G'$, and hence the proof of \Cref{thm:expansion}.

It is now straightforward to extend the completeness and duality
results of the previous section.

\begin{definition}\label{def:completion'}
Let $\algebra A$ be an algebra in $\aralg(\sigma)$.
  A \defn{compatible completion} of $\algebra A$  is an embedding $\iota \from \algebra A
  \hookrightarrow \algebra C$ of $(\{-, \rest\} \cup
  \sigma)$-algebras such that $\cC$ is in $\aralg(\sigma)$ and compatibly complete, and
  $\iota[\cA]$ is join dense in~$\cC$.
\end{definition}

\begin{corollary}\label{p:1_0'}
Let $\algebra A$ be an algebra in $\aralg(\sigma)$.
  If $\iota \from \algebra A \hookrightarrow \algebra C$ and $\iota'
  \from \algebra A \hookrightarrow \algebra C'$ are compatible completions of $\cA$ then there is a unique
  isomorphism $\theta \from \algebra C \to \algebra C'$ satisfying the
  condition $\theta \circ \iota = \iota'$.
\end{corollary}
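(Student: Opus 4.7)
The plan is to bootstrap off \Cref{p:1_0}: apply that result to the $\{-, \rest\}$-reducts of $\cC$ and $\cC'$ to obtain a unique $\{-, \rest\}$-isomorphism $\theta \from \cC \to \cC'$ with $\theta \circ \iota = \iota'$, and then verify that this $\theta$ automatically preserves each of the additional operators $\Omega \in \sigma$. Uniqueness in $\aralg(\sigma)$ will then be immediate, since any $(\{-,\rest\}\cup\sigma)$-isomorphism commuting with $\iota, \iota'$ is in particular a $\{-,\rest\}$-isomorphism commuting with them.

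The bulk of the work is therefore showing that $\theta$ respects each $\Omega \in \sigma$. First I would observe that on the embedded copy of $\algebra A$ preservation is free: since $\iota$ and $\iota'$ are themselves homomorphisms of $(\{-, \rest\} \cup \sigma)$-algebras, for all $a_1, \dots, a_n \in \algebra A$ we have
\[
\theta(\Omega^{\cC}(\iota(a_1), \dots, \iota(a_n))) = \theta(\iota(\Omega^{\cA}(a_1, \dots, a_n))) = \iota'(\Omega^{\cA}(a_1, \dots, a_n)) = \Omega^{\cC'}(\theta(\iota(a_1)), \dots, \theta(\iota(a_n))).
\]
To extend this equality from $\iota[\algebra A]$ to arbitrary $c_1, \dots, c_n \in \cC$, I would use join density of $\iota[\algebra A]$ to write each $c_i = \join S_i$ with $S_i \subseteq \iota[\algebra A]$, apply complete additivity of $\Omega^{\cC}$ in each coordinate to get $\Omega^{\cC}(c_1, \dots, c_n) = \join \Omega^{\cC}(S_1, \dots, S_n)$, then push $\theta$ through this join (using that $\theta$, being an isomorphism, is a complete homomorphism), and finally reassemble on the other side using complete additivity of $\Omega^{\cC'}$ together with join density of $\iota'[\algebra A]$ in $\cC'$ (which holds since $\theta$ is an isomorphism sending $\iota[\algebra A]$ onto $\iota'[\algebra A]$).

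The one subtlety worth checking carefully is that the intermediate joins actually exist: when writing $\Omega^{\cC}(\join S_1, \dots, \join S_n) = \join \Omega^{\cC}(S_1, \dots, S_n)$, the set $\Omega^{\cC}(S_1, \dots, S_n)$ must have a join in $\cC$. This is ensured by compatibility of $\cC$ together with the hypothesis that $\Omega$ is compatibility preserving: each $S_i$ is a pairwise-compatible set (being bounded above by $c_i$), so the componentwise image under $\Omega^{\cC}$ is pairwise compatible in $\cC$, and compatible completeness supplies the join. The corresponding fact on the $\cC'$ side is symmetric. Beyond this compatibility bookkeeping the argument is routine, so I would not expect a genuine obstacle, just some care to chain the equalities in the right order.
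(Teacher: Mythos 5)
Your proposal is correct and follows essentially the same route as the paper: the paper's proof is a one-liner that takes the isomorphism $\theta$ from \Cref{p:1_0} and notes that join density of $\iota[\algebra A]$ together with complete additivity of the extra operations forces $\theta$ to preserve them, which is exactly the argument you spell out in detail. Your extra remark about the existence of the intermediate joins is sound, though it is already guaranteed by the definition of complete additivity (the equation asserts that the right-hand join exists whenever the left-hand one does).
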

\begin{proof}
Use the isomorphism $\theta$ from \Cref{p:1_0}. The fact that $\iota[\algebra A]$ is join dense in $\algebra C$ and the complete additivity of the additional operations ensure those additional operations are preserved by $\theta$.
\end{proof}

\begin{corollary}\label{p:completion'}
  For every algebra $\cA$ in $\aralg(\sigma)$, the embedding
  $\eta_\cA \from \cA \hookrightarrow (G' \circ F') (\cA)$ is the
  compatible completion of $\cA$.
\end{corollary}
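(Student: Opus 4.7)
The plan is to reduce this to \Cref{p:completion} by observing that essentially nothing new needs to be checked beyond preservation of the additional operators by $\eta_\cA$ and the fact that the target algebra lies in $\aralg(\sigma)$.

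First I would note that by \Cref{thm:expansion}, the $\{-, \rest\}$-reduct of $(G' \circ F')(\cA)$ coincides with $(G \circ F)(\cA)$, and $\eta_\cA$ as the unit of the extended adjunction is the same map as the unit of the original adjunction (this is what \Cref{l:14} guarantees). Hence by \Cref{p:completion} applied to the $\{-, \rest\}$-reduct of $\cA$, we immediately obtain that $\eta_\cA$ is an injective $\{-, \rest\}$-homomorphism whose image is join dense in $(G' \circ F')(\cA)$, and that $(G' \circ F')(\cA)$ is compatibly complete (since compatible completeness only involves the order structure, which is determined by the $\{-, \rest\}$-reduct).

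Second, I would verify that $\eta_\cA$ is a homomorphism for the additional operations in $\sigma$. This is precisely the content of the first half of \Cref{l:14}: for each $\Omega \in \sigma$, the relation $R_\Omega$ on $\At(\cA)$ is defined exactly so that $\eta_\cA(\Omega(a_1, \dots, a_n)) = \Omega_{R_\Omega}(\eta_\cA(a_1), \dots, \eta_\cA(a_n))$. So $\eta_\cA$ is an embedding of $(\{-, \rest\} \cup \sigma)$-algebras.

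Third, I would confirm that $(G' \circ F')(\cA)$ actually lies in $\aralg(\sigma)$. Its $\{-, \rest\}$-reduct is atomic and representable by \Cref{p:completion}, and for each $\Omega \in \sigma$, the operation $\Omega_{R_\Omega}$ is compatibility preserving and completely additive by \Cref{l:12} (whose hypothesis is satisfied since $R_\Omega$ has the compatibility property by \Cref{l:10}). Putting these three observations together yields, according to \Cref{def:completion'}, that $\eta_\cA$ is a compatible completion of $\cA$ in $\aralg(\sigma)$.

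There is no real obstacle here: all the substantive work has already been done in the preceding lemmas, and the proof reduces to assembling \Cref{p:completion}, \Cref{l:10}, \Cref{l:12}, and the first half of \Cref{l:14}. The only mild subtlety is being explicit that compatible completeness and join density in the expanded setting of \Cref{def:completion'} are inherited from the $\{-, \rest\}$-reduct, since these notions depend only on the order.
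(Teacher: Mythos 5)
Your proposal is correct and follows exactly the route the paper intends for this corollary (which it leaves unproved as a straightforward assembly of \Cref{p:completion}, \Cref{l:10}, \Cref{l:12}, and the first part of \Cref{l:14}). Your closing remark that compatible completeness and join density are inherited from the $\{-,\rest\}$-reduct is the right subtlety to flag, since both the order and the compatibility relation of \Cref{algebra_compatibility} are determined by that reduct.
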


\begin{corollary}\label{c:extended-duality}
  There is a duality between $\typeface C\aralg(\sigma)$ and
  $\setq(\sigma)^{\operatorname{op}}$, where $\typeface
  C\aralg(\sigma)$ is the full subcategory of $\aralg(\sigma)$
  consisting of the compatibly complete algebras.
\end{corollary}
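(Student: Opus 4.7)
The plan is to mimic the proof of \Cref{t:discrete-duality} step by step, this time using the extended unit and counit implicit in \Cref{thm:expansion}, and relying on \Cref{p:completion'} and \Cref{p:1_0'} in place of \Cref{p:completion} and \Cref{p:1_0}. First I would observe that $G'$ co-restricts to a functor $\setq(\sigma)^{\operatorname{op}} \to \typeface{C}\aralg(\sigma)$: on $\{-, \rest\}$-reducts this is just the co-restriction already noted for $G$, so the underlying algebra is compatibly complete; the operations $\Omega_R$ are automatically compatibility preserving and completely additive by \Cref{l:12}, so the image indeed lies in $\typeface{C}\aralg(\sigma)$.

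Next I would argue that the counit $\lambda$ of \Cref{t:adj}, viewed as a family of morphisms in $\setq(\sigma)$, still provides a natural isomorphism $\operatorname{Id}_{\setq(\sigma)} \implies F' \circ G'$. The underlying set-quotient map is the same identity-style bijection $x \mapsto \{x\}$ used before and is thus already an isomorphism in $\setq$. The additional content to verify is that under this identification the relation $R$ on a $\sigma$-enriched quotient matches the relation $R_{\Omega_R}$ produced by first forming $\Omega_R$ on $G'(\pi)$ and then passing to $F'$. But that identity is precisely what the second half of \Cref{l:14} records, so $\lambda$ is a natural isomorphism in $\setq(\sigma)$.

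It remains to show that the restriction of $\eta$ to $\typeface{C}\aralg(\sigma)$ is a natural isomorphism $\operatorname{Id}_{\typeface{C}\aralg(\sigma)} \implies G' \circ F'$. For this, fix a compatibly complete $\cA$ in $\aralg(\sigma)$. By the first half of \Cref{l:14}, $\eta_\cA$ is a homomorphism of $(\{-, \rest\} \cup \sigma)$-algebras (not merely of the $\{-,\rest\}$-reducts), so $\eta_\cA \from \cA \hookrightarrow (G' \circ F')(\cA)$ is an embedding of algebras in $\aralg(\sigma)$. Then by \Cref{p:completion'}, $\eta_\cA$ is the compatible completion of $\cA$. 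Since $\cA$ is itself compatibly complete, the identity map $\cA \to \cA$ is also a compatible completion in the sense of \Cref{def:completion'}, and so by the uniqueness-up-to-unique-isomorphism result \Cref{p:1_0'}, there is a unique isomorphism $\theta \from \cA \to (G' \circ F')(\cA)$ with $\theta = \eta_\cA \circ \operatorname{id}_\cA = \eta_\cA$. Hence $\eta_\cA$ is an isomorphism in $\aralg(\sigma)$, and its inverse is automatically a complete homomorphism, so $\eta_\cA$ is an isomorphism in $\typeface{C}\aralg(\sigma)$.

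The only genuinely non-routine step is ensuring that the additional operations are carried along correctly by the unit and counit; that work has already been done in \Cref{l:14} (and, for the proof of \Cref{p:1_0'}, in the complete additivity of the operators). Naturality of $\eta$ and $\lambda$ in $\aralg(\sigma)$ and $\setq(\sigma)$ is inherited from their naturality in $\aralg$ and $\setq$ respectively. Combining these observations yields the stated duality.
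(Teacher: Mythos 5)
Your proposal is correct and follows essentially the same route as the paper: both reduce the duality to showing that $\eta$ and $\lambda$ remain natural isomorphisms in the expanded categories, with \Cref{l:14} supplying the fact that the additional operations and relations are carried along. The only cosmetic difference is that you re-run the completion-uniqueness argument at the $\sigma$-level via \Cref{p:completion'} and \Cref{p:1_0'}, whereas the paper cites \Cref{t:discrete-duality} for bijectivity and merely checks that the inverses $\eta_{\algebra A}^{-1}$ and $\lambda_\pi^{-1}$ are still morphisms in $\aralg(\sigma)$ and $\setq(\sigma)$.
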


\begin{proof}
  Given \Cref{t:discrete-duality}, we only need to check that the
  families of functions $\eta_{\algebra A}$ and $\lambda_\pi$ are
  still isomorphisms in the expanded categories, for which it only
  remains to show that the functions $\eta_{\algebra A}^{-1}$ and
  $\lambda_\pi^{-1}$ are valid morphisms in the expanded categories.
  
  We know $\eta_{\algebra A}$ is a bijection and preserves additional
  operations, and it is an elementary algebraic fact that this implies
  its inverse $\eta_{\algebra A}^{-1}$ preserves those same additional
  operations. Thus $\eta_{\algebra A}^{-1}$ is a morphism.
   
  For $\lambda_\pi^{-1}$, we must check that the reverse forth
  condition and the back condition are satisfied with respect to
  additional relations. But we saw in the proof of \Cref{l:14} that
  $\lambda_\pi$, and therefore $\lambda_\pi^{-1}$, preserves and
  reflects each additional relation. As $\lambda_\pi^{-1}$ is a
  bijection, it is then evident that the reverse forth and back
  conditions are respected.
\end{proof}

\begingroup
\setlength{\emergencystretch}{4pt}
\begin{corollary}
The category $\typeface C\aralg(\sigma)$ is a reflective subcategory of $\aralg(\sigma)$.
\end{corollary}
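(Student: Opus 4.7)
My plan is to repeat, essentially verbatim, the argument used to prove the analogous corollary in the unoperated setting, with the enriched adjunction $F' \dashv G'$ of Theorem~\ref{thm:expansion} and the enriched duality of Corollary~\ref{c:extended-duality} taking the place of their unoperated counterparts.

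First, I would observe that $G' \circ F'$ co-restricts to a functor $\aralg(\sigma) \to \typeface C\aralg(\sigma)$. Indeed, by Corollary~\ref{p:completion'}, for every $\cA \in \aralg(\sigma)$ the algebra $(G' \circ F')(\cA)$ is the compatible completion of $\cA$, and in particular it is compatibly complete, so it belongs to $\typeface C\aralg(\sigma)$. Next, I would note that the proof of Corollary~\ref{c:extended-duality} established that for every $\cC \in \typeface C\aralg(\sigma)$ the unit $\eta_\cC$ is an isomorphism in $\aralg(\sigma)$; equivalently, the restriction of $G' \circ F'$ to $\typeface C\aralg(\sigma)$ is naturally isomorphic to the identity functor on $\typeface C\aralg(\sigma)$, via $\eta$.

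These two observations combine via a standard categorical fact: given any adjunction whose unit becomes an isomorphism on a full subcategory containing the image of $G' \circ F'$, the functor $G' \circ F'$ (co-restricted) is left adjoint to the inclusion of that subcategory. Concretely, for any $\cA \in \aralg(\sigma)$ and any morphism $f \from \cA \to \cC$ with $\cC \in \typeface C\aralg(\sigma)$, naturality of $\eta$ together with the invertibility of $\eta_\cC$ forces the unique factorisation of $f$ through $\eta_\cA$ to be $\tilde f \coloneqq \eta_\cC^{-1} \circ (G' \circ F')(f)$; uniqueness follows by composing any candidate factorisation with $\eta_\cC$ on the left and using naturality. This exhibits $\typeface C\aralg(\sigma)$ as a reflective subcategory of $\aralg(\sigma)$, with reflector $G' \circ F'$. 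There is no real obstacle, as all the work has already been done in establishing the adjunction and the duality; the remaining content is a purely formal consequence.
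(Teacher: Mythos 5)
Your proposal is correct and follows essentially the same route as the paper: the paper proves the unoperated version of this corollary by noting that the restriction of $G \circ F$ to the compatibly complete algebras is naturally isomorphic to the identity (established in the duality proof) and deducing the adjunction with the inclusion as a formal consequence, and it leaves the $\sigma$-version as the evident transcription of that argument using $F' \dashv G'$, Corollary~\ref{p:completion'}, and Corollary~\ref{c:extended-duality}. Your explicit description of the factorisation $\tilde f = \eta_{\algebra C}^{-1} \circ (G' \circ F')(f)$ just spells out the standard categorical fact the paper invokes implicitly.
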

\endgroup

\bibliographystyle{amsplain}

\bibliography{../brettbib}

\providecommand{\bysame}{\leavevmode\hbox to3em{\hrulefill}\thinspace}
\providecommand{\MR}{\relax\ifhmode\unskip\space\fi MR }
\providecommand{\MRhref}[2]{%
  \href{http://www.ams.org/mathscinet-getitem?mr=#1}{#2}
}
\providecommand{\href}[2]{#2}
\begin{thebibliography}{10}

\bibitem{completion}
Bernhard Banaschewski and G\"unter Bruns, \emph{Categorical characterization of
  the {M}ac{N}eille completion}, Archiv der Mathematik \textbf{18} (1967),
  369--377.

\bibitem{Bauer_2013}
Andrej Bauer, Karin Cvetko-Vah, Mai Gehrke, Samuel~J. van Gool, and Ganna
  Kudryavtseva, \emph{A non-commutative {P}riestley duality}, Topology and its
  Applications \textbf{160} (2013), no.~12, 1423--1438.

\bibitem{BERENDSEN2010141}
Jasper Berendsen, David~N. Jansen, Julien Schmaltz, and Frits~W. Vaandrager,
  \emph{The axiomatization of override and update}, Journal of Applied Logic
  \textbf{8} (2010), no.~1, 141--150.

\bibitem{blackburn_rijke_venema_2001}
Patrick Blackburn, Maarten~de Rijke, and Yde Venema, \emph{Modal logic},
  Cambridge Tracts in Theoretical Computer Science, Cambridge University Press,
  Cambridge, 2001.

\bibitem{diff-rest1}
C\'elia Borlido and Brett McLean, \emph{Difference--restriction algebras of
  partial functions: axiomatisations and representations}, Algebra Universalis
  (to appear) (2022), arXiv:2011.15108.

\bibitem{1018.20057}
Wies{\l}aw~A. Dudek and Valentin~S. Trokhimenko, \emph{{Functional Menger
  $\mathcal P$-algebras}}, Communications in Algebra \textbf{30} (2002),
  no.~12, 5921--5931.

\bibitem{10.2307/27588391}
J.~Michael Dunn, Mai Gehrke, and Alessandra Palmigiano, \emph{Canonical
  extensions and relational completeness of some substructural logics}, The
  Journal of Symbolic Logic \textbf{70} (2005), no.~3, 713--740.

\bibitem{egrot}
Robert Egrot and Robin Hirsch, \emph{Completely representable lattices},
  Algebra Universalis \textbf{67} (2012), no.~3, 205--217.

\bibitem{esakia}
Leo~L. Esakia, \emph{Topological {K}ripke models}, Doklady Akademii Nauk
  \textbf{214} (1974), no.~2, 298--301 (Russian).

\bibitem{10.1145/2984450.2984453}
Emmanuel Filiot and Pierre-Alain Reynier, \emph{Transducers, logic and algebra
  for functions of finite words}, ACM SIGLOG News \textbf{3} (2016), no.~3,
  4--19.

\bibitem{garvacii71}
Vladimir~S. Garvac'ki\u{\i}, \emph{$\cap$-semigroups of transformations},
  Teoriya Polugrupp i ee Prilozheniya \textbf{2} (1971), 3--13 (Russian).

\bibitem{GEHRKE2001345}
Mai Gehrke and John Harding, \emph{Bounded lattice expansions}, Journal of
  Algebra \textbf{238} (2001), no.~1, 345--371.

\bibitem{GEHRKE2014290}
Mai Gehrke, Samuel~J. van Gool, and Vincenzo Marra, \emph{Sheaf representations
  of {MV}-algebras and lattice-ordered abelian groups via duality}, Journal of
  Algebra \textbf{417} (2014), 290--332.

\bibitem{goldblatt}
Robert Goldblatt, \emph{Metamathematics of modal logic, {P}art {I}}, Reports on
  Mathematical Logic \textbf{6} (1976), 41--77.

\bibitem{MR1330986}
Robin Hirsch, \emph{Completely representable relation algebras}, Bulletin of
  the Interest Group in Pure and Applied Logics \textbf{3} (1995), no.~1,
  77--91.

\bibitem{journals/jsyml/HirschH97a}
Robin Hirsch and Ian Hodkinson, \emph{Complete representations in algebraic
  logic}, The Journal of Symbolic Logic \textbf{62} (1997), no.~3, 816--847.

\bibitem{hirsch}
Robin Hirsch, Marcel Jackson, and Szabolcs Mikul{\'a}s, \emph{The algebra of
  functions with antidomain and range}, Journal of Pure and Applied Algebra
  \textbf{220} (2016), no.~6, 2214--2239.

\bibitem{disjoint}
Robin Hirsch and Brett McLean, \emph{Disjoint-union partial algebras}, Logical
  Methods in Computer Science \textbf{13} (2017), no.~2:10, 1--31.

\bibitem{1182.20058}
Ma{r}cel Jackson and Tim Stokes, \emph{{Partial maps with domain and range:
  extending Schein's representation}}, Communications in Algebra \textbf{37}
  (2009), no.~8, 2845--2870.

\bibitem{DBLP:journals/ijac/JacksonS11}
Mar{c}el Jackson and Tim Stokes, \emph{Modal restriction semigroups: towards an
  algebra of functions}, International Journal of Algebra and Computation
  \textbf{21} (2011), no.~7, 1053--1095.

\bibitem{JACKSON2015259}
Marcel Jackson and Tim Stokes, \emph{Monoids with tests and the algebra of
  possibly non-halting programs}, Journal of Logical and Algebraic Methods in
  Programming \textbf{84} (2015), no.~2, 259--275.

\bibitem{JACKSON2021106532}
Marcel Jackson and Tim {S}tokes, \emph{Override and update}, Journal of Pure
  and Applied Algebra \textbf{225} (2021), no.~3, 106532.

\bibitem{jackson2021restriction}
Marcel Jackson and Tim Stokes, \emph{Restriction in program algebra}, preprint
  (2021), arXiv:2109.05643.

\bibitem{1951}
Bjarni Jonsson and Alfred Tarski, \emph{Boolean algebras with operators. {P}art
  {I}}, American Journal of Mathematics \textbf{73} (1951), no.~4, pp.
  891--939.

\bibitem{koppelberg1989handbook}
Sabine Koppelberg, \emph{Handbook of {B}oolean algebras}, vol.~1, North
  Holland, 1989.

\bibitem{kudryavtseva2016boolean}
Ganna Kudryavtseva and {M}ark~V. Lawson, \emph{Boolean sets, skew {B}oolean
  algebras and a non-commutative {S}tone duality}, Algebra Universalis
  \textbf{75} (2016), no.~1, 1--19.

\bibitem{kudryavtseva2017perspective}
Ganna Kudryavtseva and Mark~V. Lawson, \emph{A perspective on non-commutative
  frame theory}, Advances in Mathematics \textbf{311} (2017), 378--468.

\bibitem{lawson_2010}
Mark~V. Lawson, \emph{A noncommutative generalization of {S}tone duality},
  Journal of the Australian Mathematical Society \textbf{88} (2010), no.~3,
  385--404.

\bibitem{lawson2012non}
{M}ark~V. Lawson, \emph{Non-commutative {S}tone duality: inverse semigroups,
  topological groupoids and {C}*-algebras}, International Journal of Algebra
  and Computation \textbf{22} (2012), no.~06, 1250058.

\bibitem{LAWSON201677}
Mark~V. Lawson, \emph{Subgroups of the group of homeomorphisms of the {C}antor
  space and a duality between a class of inverse monoids and a class of
  {H}ausdorff \'etale groupoids}, Journal of Algebra \textbf{462} (2016),
  77--114.

\bibitem{LAWSON2013117}
Mark~V. Lawson and Daniel~H. Lenz, \emph{Pseudogroups and their \'etale
  groupoids}, Advances in Mathematics \textbf{244} (2013), 117--170.

\bibitem{Leech19967}
Jonathan Leech, \emph{Recent developments in the theory of skew lattices},
  Semigroup Forum \textbf{52} (1996), no.~1, 7--24.

\bibitem{complete}
Brett McLean, \emph{Complete representation by partial functions for
  composition, intersection and antidomain}, Journal of Logic and Computation
  \textbf{27} (2017), no.~4, 1143--1156.

\bibitem{phdthesis}
Brett Mc{L}ean, \emph{Algebras of partial functions}, Ph.D. thesis, University
  College London, 2018.

\bibitem{2009.07895}
Brett {M}cLean, \emph{A categorical duality for algebras of partial functions},
  Journal of Pure and Applied Algebra \textbf{225} (2021), no.~11, 106755.

\bibitem{finiterep}
Brett McLean and Szabolcs Mikul\'as, \emph{The finite representation property
  for composition, intersection, domain and range}, International Journal of
  Algebra and Computation \textbf{26} (2016), no.~6, 1199--1216.

\bibitem{Schein1970}
Boris~M. Schein, \emph{Relation algebras and function semigroups}, Semigroup
  Forum \textbf{1} (1970), no.~1, 1--62.

\bibitem{schein}
Boris~M. Schein, \emph{Restrictively multiplicative algebras of
  transformations}, Izvestija Vys\v{s}ih U\v{c}ebnyh Zavedeni\u{\i} Matematika
  \textbf{95} (1970), no.~4, 91--102 (Russian).

\bibitem{schein1992difference}
Boris~M. Schein, \emph{Difference semigroups}, Communications in Algebra
  \textbf{20} (1992), no.~8, 2153--2169.

\bibitem{Tro73}
Valentin~S. Trokhimenko, \emph{Menger's function systems}, Izvestija Vys\v{s}ih
  U\v{c}ebnyh Zavedeni\u{\i} Matematika (1973), 71--78 (Russian).

\bibitem{wagnergeneralised}
Viktor~V. Wagner, \emph{Generalised groups}, Proceedings of the USSR Academy of
  Sciences \textbf{84} (1952), 1119--1122 (Russian).

\bibitem{Vagner1960}
Viktor~V. Wagner, \emph{Transformative semigroups}, Izvestija Vys\v{s}ih
  U\v{c}ebnyh Zavedeni\u{\i} Matematika \textbf{1960} (1960), no.~4 (17),
  36--48 (Russian).

\bibitem{vagner1962}
{V}iktor~V. Wagner, \emph{Restrictive semigroups}, Izvestija Vys\v{s}ih
  U\v{c}ebnyh Zavedeni\u{\i} Matematika \textbf{1962} (1962), no.~6 (31),
  19--27 (Russian).

\end{thebibliography}


\end{document}